\newcommand\R{{\ensuremath {\mathbb R} }}
\newcommand\C{{\ensuremath {\mathbb C} }}
\newcommand\1{{\ensuremath {\mathds 1} }}
\renewcommand\phi{\varphi}
\newcommand{\gH}{\mathfrak{H}}
\newcommand{\bH}{\mathbb{H}}
\newcommand{\bS}{\text{\textnormal{\textbf{S}}}}
\newcommand{\bN}{\text{\textnormal{\textbf{N}}}}
\newcommand{\bF}{\text{\textnormal{\textbf{F}}}}
\newcommand{\gS}{\mathfrak{S}}
\renewcommand{\to}{\rightarrow}
\newcommand{\cP}{\mathcal{P}}
\newcommand{\cN}{\mathcal{N}}
\newcommand{\cF}{\mathcal F}
\newcommand{\cE}{\mathcal{E}}
\newcommand{\cC}{\mathcal{C}}
\newcommand{\cK}{\mathcal{K}}
\newcommand{\lmax}{{\ell_{\rm max}}}
\newcommand\ii{{\ensuremath {\infty}}}
\newcommand\pscal[1]{{\ensuremath{\left\langle #1 \right\rangle}}}
\newcommand{\norm}[1]{ \left| \! \left| #1 \right| \! \right| }
\DeclareMathOperator{\tr}{{\rm Tr}}
\renewcommand{\leq}{\leqslant}
\renewcommand{\geq}{\geqslant}
\begin{document}
\date{\today}
\title{\Large A Numerical Perspective\\ \bigskip  on Hartree-Fock-Bogoliubov Theory}
\authormark{Mathieu LEWIN \& Séverine PAUL}
\runningtitle{A Numerical Perspective on Hartree-Fock-Bogoliubov Theory}

\author{Mathieu LEWIN}
\address{CNRS and Laboratoire de Mathématiques (CNRS UMR 8088)\\ Universit{\'e} de Cergy-Pontoise, 95 000 Cergy-Pontoise - France.\\ 
Email: \email{mathieu.lewin@math.cnrs.fr}}

\author{Séverine PAUL}
\address{Laboratoire de Mathématiques (CNRS UMR 8088)\\ Universit{\'e} de Cergy-Pontoise, 95 000 Cergy-Pontoise - France.\\ 
Email: \email{severine.paul@u-cergy.fr}}

\maketitle

\bigskip

\begin{abstract}
The method of choice for describing attractive quantum systems is Hartree-Fock-Bogoliubov (HFB) theory. This is a nonlinear model which allows for the description of \emph{pairing effects}, the main explanation for the superconductivity of certain materials at very low temperature.

This paper is the first study of Hartree-Fock-Bogoliubov theory from the point of view of numerical analysis. We start by discussing its proper discretization and then analyze the convergence of the simple fixed point (Roothaan) algorithm. Following works by Cancès, Le Bris and Levitt for electrons in atoms and molecules, we show that this algorithm either converges to a solution of the equation, or oscillates between two states, none of them being a solution to the HFB equations. We also adapt the Optimal Damping Algorithm of Cancès and Le Bris to the HFB setting and we analyze it.

The last part of the paper is devoted to numerical experiments. We consider a purely gravitational system and numerically discover that pairing always occurs. We then examine a simplified model for nucleons, with an effective interaction similar to what is often used in nuclear physics. In both cases we discuss the importance of using a damping algorithm.

\medskip 

\noindent{\scriptsize\copyright~2012 by the authors. This paper may be reproduced, in its entirety, for non-commercial purposes.}
\end{abstract}

\bigskip

\tableofcontents

\bigskip

\section{Introduction}

Hartree-Fock-Bogoliubov (HFB) theory is the method of choice for describing some special features of attractive fermionic quantum systems~\cite{RinSch-80}. It is a generalization of the famous Hartree-Fock (HF) method~\cite{LieSim-77} used in quantum chemistry. It also generalizes the Bardeen-Cooper-Schrieffer (BCS) theory of superconductivity~\cite{BarCooSch-57} which was invented in 1957 to explain the complete loss of resistivity of certain materials at very low temperature. In 1958 Bogoliubov realized in~\cite{Bogoliubov-58} that the BCS theory was actually very similar to his previous works~\cite{Bogoliubov-47,Bogoliubov-47b,Bogoliubov-47c} on the superfluidity of certain bosonic systems. He adapted it to fermions, leading to a model that is now called Hartree-Fock-Bogoliubov, and which is the main interest of this article. 

In Hartree-Fock-Bogoliubov theory the state of the system is completely determined by two operators~\cite{BacLieSol-94}. The \emph{one-particle density matrix} $\gamma$ is the same as in Hartree-Fock theory~\cite{LieSei-09}, whereas the \emph{pairing density matrix} $\alpha$ describes the Cooper pairing effect. This effect can only hold in attractive systems. When the interaction potential is positive, the energy is decreased by replacing $\alpha$ by $0$.

One of the most interesting questions in HFB theory is precisely the existence of pairing, that is, the non-vanishing of the matrix $\alpha$ for a minimizer. This question has been settled in the simpler translation-invariant BCS theory~\cite{BilFan-68,Vansevenant-85,Yang-91,LeoYan-00,FraHaiNabSei-07,HaiHamSeiSol-08,HaiSei-08} and in some cases for the translation-invariant Hubbard model~\cite{BacLieSol-94}. But it remains completely open for general attractive systems with a few particles, like those encountered in nuclear physics. In~\cite{LenLew-10}, the first author of this paper has shown with Lenzmann the existence of HFB minimizers for a purely Newtonian system of $N$ fermions, but it is not yet known if $\alpha\neq0$. One of the purpose of this work is to answer this question numerically.

The HFB energy is a nonlinear function of $\gamma$ and $\alpha$. Because of nonlinearity, minimizing this functional on a computer is not an easy task. The simpler Hartree-Fock model in which $\alpha=0$ is now well understood from the point of view of numerical analysis~\cite{LeBris-05,CanDefKutLeBMad-03}, even though most authors have concentrated their attention to the special case of electrons in an atom or a molecule. Cancès and Le Bris have studied in~\cite{CanBri-00a,CanBri-00} the simpler fixed point algorithm called the Roothaan algorithm~\cite{Roothaan-51} and they have shown that this algorithm either converges or oscillates between two points, none of them being the solution of the HF equation. This result was recently improved by Levitt~\cite{Levitt-12}. Cancès and Le Bris have also proposed a new algorithm called \emph{Optimal Damping}, which is now used in several chemistry programs. It is based on the fact that one can freely minimize the energy over mixed HF states instead of pure HF states, by Lieb's variational principle~\cite{Lieb-81}. 

Because the HFB model is an extension of HF theory, it is natural to believe that these ideas can be applied to the HFB case as well. In particular, under appropriate assumptions on the interaction potential, Bach, Fröhlich and Jonsson have recently shown in~\cite{BacFroJon-09} an HFB equivalent of Lieb's variational principle. Up to some difficulties that will be explained later, we will show in this paper that the previously mentioned results can indeed be transposed to the Hartree-Fock-Bogoliubov model.

The paper is organized as follows. In the next section we quickly recall the basic formulation of Hartree-Fock-Bogoliubov theory. Then, in Section~\ref{sec:discretized}, we derive the discretized HFB equations and we prove that, in the limit of a large Galerkin basis set, the discretized solution converges to the true solution. We also discuss at length the possible symmetries of the system and we formulate the theory when these symmetries are taken into account. 

In Section~\ref{sec:algos} we study the HFB Roothaan algorithm and we prove that it either converges to a solution of the HFB equation, or oscillate between two points, none of them being a solution of the equations. We then introduce an equivalent of the Optimal Damping Algorithm of Cancès and Le Bris, which is based on an optimization in the set of mixed HFB states.

Section~\ref{sec:num} is devoted to the presentation of some numerical results. We first consider the purely gravitational model studied by Lenzmann and Lewin~\cite{LenLew-10} and we numerically discover that there is always pairing. Then, we introduce a simplified model for nucleons, for which we as well present some preliminary numerical results. We particularly discuss the importance of using a damped algorithm instead of a simple fixed point method, a fact which has already been noticed in nuclear physics~\cite{DecGog-80}. Our approach could help in improving the existing numerical techniques.

\bigskip

\noindent\textbf{Acknowledgement.} The authors would like to thank Laurent Bruneau and Julien Sabin for useful discussions. They acknowledge financial support from the French Ministry of Research (ANR-10-BLAN-0101) and from the  European Research Council under the European Community's Seventh Framework Programme (FP7/2007-2013 Grant Agreement MNIQS 258023).

\section{A quick review of Hartree-Fock-Bogoliubov theory}
\subsection{Hartree-Fock-Bogoliubov states and their energy}

We consider a system composed of $N$ identical fermions, described by the many-body Hamiltonian
\begin{equation}
H(N)=\sum_{j=1}^N T_j + \sum_{1\leq k<\ell\leq N} W_{k\ell},
\label{eq:N-body-Hamiltonian}
\end{equation}
acting on the fermionic $N$-body space $\gH^N=\bigwedge_1^N\gH$, where $\gH$ is the space for one particle. Here, $T:\gH\to\gH$ is a one-body operator and $W:\gH^2\to\gH^2$ accounts for the interactions between the particles. We use the notation $T_j$ for the operator $T$ which acts on the $j$th component of the tensor product $\gH^N=\bigwedge_1^N\gH$, that is $T_j=1\otimes\cdots \otimes T\otimes\cdots\otimes1$, and a similar convention for $W_{k\ell}$.

Most of what follows is valid in an abstract setting.
However, for the sake of simplicity, in the whole paper we will restrict ourselves to the special case of nonrelativistic fermions with $q$ internal degrees of freedom, moving in $\R^3$ ($q=2$ for spin-$1/2$ particles like electrons). We also assume that no external force is applied, and that their interaction is translation-invariant. Then,
in units where $m=1/2$ and $\hbar=1$, we have
$$\gH=L^2(\R^3,\C^q),\qquad T=-\Delta,\qquad W_{k\ell}=W(x_k-x_\ell).$$
The $N$-body space $\gH^N=\bigwedge_1^NL^2(\R^3,\C^q)$ consists of wave functions $\Psi(x_1,\sigma_1,...,x_N,\sigma_N)$ which are antisymmetric with respect to exchanges of the variables $(x_i,\sigma_i)$. In principle $W(x_k-x_\ell)$ is also a function of the two internal variables $\sigma_k,\sigma_\ell\in\{1,...,q\}$ of the particles $k$ and $\ell$. Again for simplicity, we will assume that $W$ only depends on the space variable $x_k-x_\ell$. Finally, we make the assumption that $W$ is smooth and decays fast enough at infinity to ensure that $H$ is bounded from below. To make this more explicit, we assume in the whole paper that
\begin{equation}
 W=W_1+W_2\in L^p(\R^3)+L^q(\R^3)\quad\text{for some $2\leq p\leq q<\ii$.}
\label{eq:assumption_W_general}
\end{equation}
Sometimes we will make more precise assumptions on $W$.

We are interested in the case where $W$ is attractive ($W\leq0$), or at least partially attractive ($W\leq0$ on a set of measure non zero).
By translation invariance, the Hamiltonian $H(N)$ has no ground state (that is, the bottom of its spectrum cannot be an eigenvalue). But it may have one once the center of mass is removed, if $W$ is sufficiently negative.

In a nonlinear model approximating the many-body problem above, there could be a ground state, even if the system is translation-invariant. Of course, translation invariance is not lost and there are then infinitely many ground states, obtained by translating the system arbitrarily. In Hartree-Fock theory~\cite{LieSim-77,BacLieSol-94}, such breaking of symmetry is known to occur for instance when $W(x)=-1/|x|$ is a purely gravitational interaction and $T=-\Delta$ (nonrelativistic), or $T=\sqrt{1-\Delta}-1$ (pseudo-relativistic), see~\cite{LenLew-10,Lewin-11} and Theorem~\ref{thm:existence} below.

For attractive systems, it is often convenient to allow for another symmetry breaking, namely that of \emph{particle number}. This means that the fixed particle number $N$ is replaced by an operator $\cN$ whose eigenvalues are $0,1,2,...$. Only the \emph{average particle number} is well defined for a quantum state. The classical way to define $\cN$ is to introduce the fermionic Fock space
$$\cF=\C\oplus\bigoplus_{n\geq1}\gH^n,$$
which gathers all the possible $n$-particle subspaces in a direct sum. A (pure) quantum state in $\cF$ is a vector $\Psi=\psi_0\oplus\psi_1\oplus\cdots$ which is normalized in the sense that
$$\norm{\Psi}_\cF^2=|\psi_0|^2+\sum_{n\geq1}\norm{\psi_n}_{\gH^n}^2=1.$$
The \emph{average particle number} is the diagonal operator
$$\cN:=0\oplus\bigoplus_{n\geq1}n,$$
such that the average number of particles in a state $\Psi$ is given by the formula
$$\pscal{\Psi,\cN\Psi}=\sum_{n\geq1} n\norm{\psi_n}^2_{\gH^n}.$$
Instead of imposing that $\Psi\in\gH^N$ which is equivalent to $\Psi$ being an eigenvector of $\cN$, $\cN\Psi=N\Psi$, we will only fix the average particle number of $\Psi$:
$$\pscal{\Psi,\cN\Psi}=N.$$
Allowing to have $\psi_n\neq0$ for $n\neq N$ is useful to describe some physical properties of attractive systems. In most practical cases it is expected that the variance $\sum_{n\geq0}(n-N)^2\norm{\psi_n}^2_{\gH^n}$ will be quite small, i.e. that $\Psi$ will live in a neighborhood of $\gH^N$. 

Similarly to the particle number operator $\cN$, the many-body Hamiltonian $H(N)$ is now replaced by a many-body Hamiltonian $\bH$ on Fock space
\begin{equation}
\bH:=0\oplus\bigoplus_{n\geq1}H(n)
\label{eq:def_bH} 
\end{equation}
which is nothing else but the diagonal operator which coincides with $H(n)$ on each $n$-particle subspace. We will not discuss here the problem of defining $\bH$ as a self-adjoint operator on $\cF$.

\medskip

The Hartree-Fock-Bogoliubov (HFB) model generalizes the well-known Hartree-Fock (HF) method and it allows for breaking of particle number in a very simple fashion. The method consists in restricting the many-body Hamiltonian $\bH$ on $\cF$ to a special class of states called \emph{Hartree-Fock-Bogoliubov states} (or \emph{quasi-free states}), which are completely characterized by their one-particle density matices~\cite{BacLieSol-94}. 

Let us recall that a state in Fock space has two one-particle density matrices, instead of one in usual HF theory. These are two operators $\gamma:\gH\to\gH$ and $\alpha:\gH\to\gH$, which are defined by means of creation and annihilation operators by the relations~\cite{BacLieSol-94}
$$\pscal{\Psi,a^\dagger(f)a(g)\Psi}_\cF=\pscal{g,\gamma f}_\gH,\qquad \pscal{\Psi,a(f)a(g)\Psi}_\cF=\pscal{g,\alpha \overline{f}}_\gH.$$
When $\Psi$ lives in a particular $N$-particle subspace $\gH^N$, then $a(f)a(g)\Psi\in\gH^{N-2}$ hence $\pscal{\Psi,a(f)a(g)\Psi}_\cF=0$ for all $f,g\in\gH$ and the matrix $\alpha$ vanishes. However for a general state $\Psi\in\cF$, one can have $\alpha\neq0$.

The two operators $\gamma$ and $\alpha$ satisfy several constraints. First, we have $\gamma^*=\gamma$, $0\leq\gamma\leq 1$ (in the sense of operators) and $\tr\gamma=\pscal{\Psi,\cN\Psi}=N$, for the one-particle matrix $\gamma$. On the other hand, the so-called \emph{pairing matrix} $\alpha$ satisfies $\alpha^T=-\alpha$. Its kernel $\alpha(x,\sigma,x',\sigma')=-\alpha(x',\sigma',x,\sigma)$ can thus be seen as an antisymmetric two-body wavefunction in $\gH^2$. It is interpreted as describing pairs of virtual particles, called \emph{Cooper pairs}. 

It is well known that a quantum state $\Psi\in\gH^N$ such that $(\gamma_\Psi)^2=\gamma_\Psi$ is necessarily a Slater determinant (that is, a Hartree-Fock state). The same is true for states in Fock space. Consider a pair $(\gamma,\alpha)$ which is such that
\begin{equation}
\Gamma^2=\Gamma,\quad \text{with}\quad \Gamma:=\begin{pmatrix}
\gamma&\alpha\\ \alpha^*&1-\overline{\gamma}
\end{pmatrix}
\label{eq:def_big_Gamma}
\end{equation}
on $\gH\oplus\gH$. Hence we have for instance $\alpha\alpha^*=\gamma-\gamma^2$. Then there exists a unique state $\Psi$ in $\cF$ which has $\gamma$ and $\alpha$ as density matrices. This state has the property that any observable can be computed using only $\gamma$ and $\alpha$, by Wick's formula (see Thm 2.3 in~\cite{BacLieSol-94}). The quantum states obtained by considering projections $\Gamma$ are called Hartree-Fock-Bogoliubov states and they generalize usual Hartree-Fock states. When $\alpha\equiv0$, then $\gamma=\sum_{j=1}^N|\phi_j\rangle\langle\phi_j|$ is a rank-$N$ projection and the corresponding state is the usual Slater determinant 
$$\Psi=\phi_1\wedge\cdots\wedge\phi_N=\frac{1}{\sqrt{N!}}\det\big(\phi_i(x_j,\sigma_j)\big).$$
When $\alpha\neq0$, $\Psi$ can be obtained by applying a Bogoliubov rotation to the vacuum but we will not explain this further. For the present work, we will only need the formula of the total energy, in terms of $\gamma$ and $\alpha$: 
\begin{align}
\pscal{\Psi,\bH\Psi}_\cF&=\sum_{n\geq0}\pscal{\psi_n,H(n)\psi_n}_{\gH^n}\nonumber\\
&=\tr(-\Delta)\gamma+\frac12\int_{\R^3}\int_{\R^3}W(x-y)\Big(\rho_\gamma(x)\rho_\gamma(y)-|\gamma(x,y)|^2+|\alpha(x,y)|^2\Big)dx\,dy\nonumber\\
&:=\cE(\gamma,\alpha)
\label{eq:def_energy}
\end{align}
where $\rho_\gamma(x)=\tr_{\C^q}(\gamma(x,x))$ is the density of particles in the system. The terms in the double integral are respectively called the \emph{direct}, \emph{exchange} and \emph{pairing} terms. Taking $\alpha\equiv0$ one recovers the usual Hartree-Fock energy which has been studied by many authors~\cite{LieSim-77,Lions-87,Bach-92,BacLieSol-94}.
Our main goal in this paper is to investigate the minimization of the more complicated nonlinear functional $\cE(\gamma,\alpha)$, when $\gamma$ and $\alpha$ are submitted to the above constraints, and its numerical implementation. We will show below that the energy $\cE$ is well defined in an appropriate function space, under our assumption~\eqref{eq:assumption_W_general} on $W$.

Note that the variance of the particle number for a HFB state $\Psi$ in Fock space can be expressed only in terms of $\alpha$ by
$$\pscal{\Psi,\big(\cN-\pscal{\Psi,\cN\Psi}_{\cF}\big)^2\Psi}_\cF=\sum_{n\geq0}(n-N)^2\norm{\psi_n}^2_{\gH^n}=2\,\tr_{\gH}(\alpha\alpha^*),$$
see Lemma 2.7 in~\cite{BacLieSol-94}. The spreading of the HFB state over the different spaces $\gH^n$ is therefore determined by the Hilbert-Schmidt norm of the pairing matrix $\alpha$. We recover the fact that an HFB state has a given particle number if and only if its pairing matrix $\alpha$ vanishes.

\subsection{Pure vs mixed states}
In our previous description, we have only considered pure states in Fock space, that is states given by a normalized vector $\Psi\in\cF$. For practical purposes, it is very convenient to extend the model to mixed states, which are nothing else but convex combinations of pure states, given by a (many-body) density matrix in $\cF$
$$D=\sum_{j}\lambda_j|\Psi_j\rangle\langle\Psi_j|\quad \text{with}\quad \lambda_j\geq0,\quad \sum_j\lambda_j=1,\quad \pscal{\Psi_i,\Psi_j}_\cF=\delta_{ij}.$$
The average particle number and energy are then given by the formulas
$$\tr_\cF\big(\cN D)=\sum_{j}\lambda_j\pscal{\Psi_j,\cN\Psi_j},\qquad \tr_\cF\big(\bH D)=\sum_{j}\lambda_j\pscal{\Psi_j,\bH\Psi_j}.$$
Resorting to mixed states is mandatory at positive temperature, when the equilibrium state of the system will actually always be a mixed state. But it is also very useful at zero temperature, even if the true ground state is a pure state. We will recall later the practical advantages of using mixed HFB states.

Similarly to what we have explained in the previous section, there is a class of \emph{mixed} Hartree-Fock-Bogoliubov states, which are completely characterized by their one-particle density matrices $\gamma$ and $\alpha$. The latter now satisfy the constraint
\begin{equation}
\begin{pmatrix}
0&0\\ 0&0\end{pmatrix}
\leq 
\Gamma:=\begin{pmatrix}
\gamma&\alpha\\ \alpha^*&1-\overline{\gamma}\end{pmatrix}
\leq
\begin{pmatrix}
1&0\\ 0&1\end{pmatrix}
\label{eq:relaxed-constraint}
\end{equation}
on $\gH\oplus\gH$, which is nothing else but the relaxation of the constraint $\Gamma^2=\Gamma$ of pure states.
The set of one-particle density matrices of mixed HFB states is therefore a convex set, whose extremal points are the density matrices of pure HFB states. One should remember that the set of mixed HFB states is \emph{not} the convex hull of HFB pure states, however. The relation between the density matrices $(\gamma,\alpha)$ and the corresponding HFB states in $\cF$ is highly nonlinear.

The energy of a mixed HFB state described by the density matrices $(\gamma,\alpha)$ is given by the same formula~\eqref{eq:def_energy} as for pure states. Hence, minimizing this energy under the relaxed constraint~\eqref{eq:relaxed-constraint} is equivalent to minimizing the full quantum energy over all mixed HFB states. The natural question arises whether a minimizer, when it exists, is automatically a pure state. The answer to this question is positive in many situations, as we will see below.

\bigskip

Before turning to the comparison between the minimization among pure and mixed states, we first introduce the variational sets on which the energy is well defined. The sets of all pure and mixed HFB states with finite kinetic energy are respectively given by 
\begin{equation}
\cP:=\left\{(\gamma,\alpha)\in\gS_1(\gH)\times\gS_2(\gH)\ :\ \alpha^T=-\alpha,\ \Gamma=\Gamma^*=\Gamma^2,\ \tr(-\Delta)\gamma<\ii\right\}
\label{eq:def_set-pure} 
\end{equation}
and
\begin{equation}
\cK:=\left\{(\gamma,\alpha)\in\gS_1(\gH)\times\gS_2(\gH)\ :\ \alpha^T=-\alpha,\ 0\leq \Gamma=\Gamma^*\leq1_{\gH\oplus\gH},\ \tr(-\Delta)\gamma<\ii\right\},
\label{eq:def_set-mixed} 
\end{equation}
(the matrix $\Gamma$ is the one appearing in~\eqref{eq:def_big_Gamma}). 
Here $\gS_1(\gH)$ and $\gS_2(\gH)$ denote the spaces of trace-class and Hilbert-Schmidt operators~\cite{Simon-77}. The expression $\tr(-\Delta)\gamma$ is to be understood in the sense of quadratic forms, that is
$$\tr(-\Delta)\gamma=\sum_{k=1}^3\tr(p_k\gamma p_k)\in[0,+\ii],\qquad \text{with}\quad p_k=-i\partial_{x_k}.$$

In practice we want to fix the total average number of particles. For this reason we also define the constrained sets 
\begin{equation}
\cP(N):=\left\{(\gamma,\alpha)\in\cP\ :\ \tr\gamma=N\right\}
\label{eq:def_set-pure_N} 
\end{equation}
and
\begin{equation}
\cK(N):=\left\{(\gamma,\alpha)\in\cK\ :\ \tr\gamma=N\right\},
\label{eq:def_set-mixed_N} 
\end{equation}
of pure and mixed states with average particle number $N$. In practice $N$ is an integer but it is convenient to allow any non-negative real number.

The following lemma says that the energy is a well-defined functional on the largest of the above sets $\cK$, and that it is bounded from below on $\cK(N)$ for any $N\geq0$.

\begin{lemma}[The HFB energy is bounded-below on $\cK(N)$]\label{lem:bd-below}
When $W=W_1+W_2\in L^p(\R^3)+L^q(\R^3)$ with $2\leq p\leq q<\ii$, then $\cE(\gamma,\alpha)$ is well defined for any $(\gamma,\alpha)\in\cK$. It also satisfies a bound of the form
\begin{equation}
\forall(\gamma,\alpha)\in\cK,\qquad \cE(\gamma,\alpha)\geq \frac12\tr(-\Delta)\gamma-C(N)
\label{eq:bound-below-energy}
\end{equation}
for some constant $C(N)$ depending only on $N=\tr(\gamma)$.
\end{lemma}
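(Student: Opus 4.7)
The plan is to bound each of the three interaction terms in the energy \eqref{eq:def_energy}---the direct, exchange and pairing integrals, which I denote $D$, $X$, $P$---in absolute value by $\varepsilon K+C(\varepsilon,N)$ for any $\varepsilon>0$, where $K:=\tr(-\Delta)\gamma$, and then to pick $\varepsilon$ small enough that the total correction to $K$ is at most $K/2+C(N)$. As input I will use $\gamma\in\gS_1$ with $0\leq\gamma\leq 1$, $\tr\gamma=N$, the Hoffmann-Ostenhof inequality $\|\nabla\sqrt{\rho_\gamma}\|_{L^2}^2\leq K$ together with the Sobolev embedding $H^1(\R^3)\hookrightarrow L^6(\R^3)$, which provides $\|\rho_\gamma\|_{L^1}=N$ and $\|\rho_\gamma\|_{L^3}\leq CK$.

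For the direct and exchange terms, the key input is the pointwise inequality $|\gamma(x,y)|^2\leq\rho_\gamma(x)\rho_\gamma(y)$ which holds because the kernel of $\gamma\geq 0$ is positive semidefinite. Both $|D|$ and $|X|$ are therefore dominated by $\int\!\int|W(x-y)|\rho_\gamma(x)\rho_\gamma(y)\,dx\,dy$. Splitting $W=W_1+W_2$ with $W_1\in L^p$, $W_2\in L^q$ and applying Young's convolution inequality with the exponent $s=2p/(2p-1)\in[1,4/3]$ gives $\int\!\int|W_1|\rho_\gamma\rho_\gamma\leq\|W_1\|_p\|\rho_\gamma\|_s^2$. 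Interpolating $\rho_\gamma$ between $L^1$ and $L^3$ then produces a factor $K^{3/(2p)}$, whose exponent lies in $(0,3/4]$ and is therefore absorbed into $\varepsilon K$ via $K^\theta\leq\varepsilon K+C_\varepsilon$ for $\theta<1$; the estimate for $W_2$ is identical.

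The pairing term is the main technical step, because $\alpha$ is not the kernel of a positive operator, so the Cauchy-Schwarz pointwise bound is unavailable. The key observation is that $0\leq\Gamma\leq 1$ implies $\alpha\alpha^*\leq\gamma(1-\gamma)\leq\gamma$, and hence $\|\nabla_x\alpha\|_{L^2(\R^6)}^2=\tr(-\Delta)\alpha\alpha^*\leq K$; the antisymmetry $\alpha(y,x)=-\alpha(x,y)$ then gives the same bound on $\nabla_y\alpha$. To exploit that $W$ depends only on $x-y$, I pass to the relative coordinate by introducing the ``relative-coordinate density''
\[
\tilde\rho(z):=\int_{\R^3}|\alpha(w+z/2,w-z/2)|^2\,dw,
\]
so that $|P|\leq\int_{\R^3}|W(z)|\tilde\rho(z)\,dz$ and $\int\tilde\rho=\|\alpha\|_{\gS_2}^2\leq N$. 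A Hoffmann-Ostenhof-type computation applied to $\nabla_z\tilde\rho=2\,\mathrm{Re}\int\bar\alpha\,\nabla_z\alpha\,dw$, using Cauchy-Schwarz in $w$ together with the change of variables $(x,y)\leftrightarrow(w,z)$, yields $\|\nabla\sqrt{\tilde\rho}\|_{L^2(\R^3)}^2\leq K$. Sobolev then gives $\|\tilde\rho\|_{L^3(\R^3)}\leq CK$, and the same Young-Hölder-interpolation argument as for $D$ and $X$ bounds $|P|$ by $\varepsilon K+C(\varepsilon,N)$.

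Collecting the three estimates gives $\cE(\gamma,\alpha)\geq K-\tfrac32\varepsilon K-\tfrac32 C(\varepsilon,N)$, and choosing $\varepsilon=1/3$ yields the announced bound $\cE\geq K/2-C(N)$; the same estimates show that each interaction term is finite, so $\cE$ is well defined on the whole of $\cK$. The main obstacle is isolated in the pairing term: because $W(x-y)$ has no finite $L^r$-norm on $\R^6$, one cannot directly pair $|W(x-y)|$ against $|\alpha|^2\in L^{3/2}(\R^6)$; the trick is to push the analysis back to three dimensions via $\tilde\rho$ and to rescue a Hoffmann-Ostenhof inequality for this density despite $\alpha$ itself not having a positive-semidefinite kernel.
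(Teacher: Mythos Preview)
Your argument is correct, but it is considerably more elaborate than the paper's, especially for the exchange and pairing terms. The paper does not use the pointwise Cauchy--Schwarz bound $|\gamma(x,y)|^2\leq\rho_\gamma(x)\rho_\gamma(y)$ at all, nor does it introduce a relative-coordinate density for $\alpha$. Instead it invokes once the relative form-boundedness $|W|\leq\varepsilon(-\Delta)+C_\varepsilon$ (valid since $W\in L^p+L^q$ with $p,q>3/2$) and applies it as an operator inequality in the $x$-variable with $y$ fixed. For the exchange term this gives $\int\!\!\int|W||\gamma|^2\leq\varepsilon\tr(-\Delta)\gamma^2+C_\varepsilon\tr\gamma^2\leq\varepsilon K+C_\varepsilon N$ using $\gamma^2\leq\gamma$; for the pairing term the identical computation with $\alpha$ in place of $\gamma$ gives $\int\!\!\int|W||\alpha|^2\leq\varepsilon\tr(-\Delta)\alpha\alpha^*+C_\varepsilon\tr\alpha\alpha^*\leq\varepsilon K+C_\varepsilon N$ using $\alpha\alpha^*\leq\gamma$. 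Each is a single line. Your route---Young's inequality, $L^1$--$L^3$ interpolation, and the auxiliary density $\tilde\rho$ with its own Hoffmann--Ostenhof bound---is essentially a hands-on reconstruction of what the inequality $|W|\leq\varepsilon(-\Delta)+C_\varepsilon$ encodes. It works, and the $\tilde\rho$ device is a nice idea in its own right, but it is worth noting that the obstacle you single out (that $W(x-y)$ has no finite $L^r$-norm on $\R^6$) disappears entirely once one thinks of $|W|$ as a form-bounded perturbation of $-\Delta_x$ rather than as a function to be paired against $|\alpha|^2$ by H\"older.
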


\begin{proof}
The assumption that $W=W_1+W_2\in L^p(\R^3)+L^q(\R^3)$ with $2\leq p\leq q<\ii$ implies that $W$ is relatively form-bounded with respect to the Laplacian, with relative bound as small as we want~\cite{Davies}. This means $|W|\leq \epsilon(-\Delta)+C_\epsilon$ in the sense of quadratic forms, for all $\epsilon>0$ and for some constant $C_\epsilon$. This can now be used to verify that the energy is well defined under the assumption that $\tr(-\Delta)\gamma<\ii$. First, we have for the direct term
\begin{equation*}
\int_{\R^3}\int_{\R^3}|W(x-y)|\,\rho_\gamma(x)\rho_\gamma(y)\,dx\,dy \leq \epsilon N \int_{\R^3}|\nabla\sqrt{\rho_\gamma}|^2+C_\epsilon\,N^2 
\leq \epsilon N \tr(-\Delta)\gamma+C_\epsilon\,N^2,
\end{equation*}
where in the last line we have used the Hoffmann-Ostenhof inequality~\cite{Hof-77}, 
\begin{equation}
\int_{\R^3}|\nabla\sqrt{\rho_\gamma}|^2\leq\tr(-\Delta)\gamma.
\label{eq:Hoffmann-Ostenhof} 
\end{equation}
The exchange term is bounded similarly by applying the inequality $|W|\leq \epsilon(-\Delta)+C_\epsilon$ in $x$ with $y$ fixed:
\begin{align*}
\int_{\R^3}\int_{\R^3}|W(x-y)|\,|\gamma(x,y)|^2\,dx\,dy&\leq  \epsilon\int_{\R^3}\int_{\R^3}|\nabla_x\gamma(x,y)|^2\,dx\,dy+C_\epsilon \int_{\R^3}\int_{\R^3}|\gamma(x,y)|^2\,dx\,dy\\
&=\epsilon\tr(-\Delta)\gamma^2+C_\epsilon\tr\gamma^2\leq \epsilon\tr(-\Delta)\gamma+C_\epsilon N,
\end{align*}
since $\gamma^2\leq\gamma$. Similarly we have, since $\alpha\alpha^*\leq \gamma-\gamma^2\leq\gamma$,
\begin{equation*}
\int_{\R^3}\int_{\R^3}|W(x-y)|\,|\alpha(x,y)|^2\,dx\,dy\leq \tr\big(\epsilon(-\Delta)+C_\epsilon\big)\alpha\alpha^*\leq \epsilon\tr(-\Delta)\gamma+C_\epsilon N.
\end{equation*}
All this shows that all the terms in the energy are well defined when $(\gamma,\alpha)\in\cK(N)$. Also, we have 
\begin{equation}
\cE(\gamma,\alpha)\geq \big(1-\epsilon-\epsilon N/2\big)\tr(-\Delta)\gamma-C_\epsilon N-C_\epsilon N^2/2.
\label{eq:lower_estimate} 
\end{equation}
Taking $\epsilon=1/(2+N)$ finishes the proof.
\end{proof}

Lemma~\ref{lem:bd-below} allows us to define the minimization problems for pure and mixed states as follows:
\begin{equation}
I(N):=\inf_{(\gamma,\alpha)\in\cK(N)}\cE(\gamma,\alpha),
\label{eq:def_I_N}
\end{equation}
\begin{equation}
J(N):=\inf_{(\gamma,\alpha)\in\cP(N)}\cE(\gamma,\alpha).
\label{eq:def_J_N}
\end{equation}
Of course we have $J(N)\geq I(N)$ since $\cP(N)\subset\cK(N)$. In many cases, we have that $I(N)=J(N)$ and that any minimizer, when it exists, is automatically a pure HFB state. We give two results in the literature going in this direction. The first deals with \emph{purely repulsive interactions} and it is Lieb's famous variational principle~\cite{Lieb-81} (see also Thm. 2.11 in~\cite{BacLieSol-94}).

\begin{theorem}[Lieb's HF Variational Principle~\cite{Lieb-81}]\label{thm:Lieb}
Assume that 
$$W\geq0$$ 
and let $N$ be an integer. Then for any $(\gamma,\alpha)\in\cK(N)$, there exists $(\gamma',0)\in\cP(N)$ such that 
$$\cE(\gamma,\alpha)\geq \cE(\gamma',0).$$
In particular, we have $I(N)=J(N)$.

If $W>0$ a.e., then any minimizer for $I(N)$, when it exists, is necessarily of the form $(\gamma',0)$ with $(\gamma')^2=\gamma'$.
\end{theorem}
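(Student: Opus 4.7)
My plan is to split the argument into two well-separated reductions: first kill the pairing matrix $\alpha$, then apply the classical Hartree-Fock variational principle to pass from mixed to pure states. The first reduction rests on the observation that $\alpha$ enters the energy through a single non-negative term,
\[
\cE(\gamma, \alpha) = \cE(\gamma, 0) + \frac12 \int_{\R^3}\int_{\R^3} W(x-y)\,|\alpha(x,y)|^2\,dx\,dy,
\]
so that $W \geq 0$ immediately gives $\cE(\gamma, \alpha) \geq \cE(\gamma, 0)$. To legitimize the replacement I check that $(\gamma, 0) \in \cK(N)$: the operator inequality $0 \leq \Gamma \leq 1_{\gH \oplus \gH}$ on the full block matrix forces $0 \leq \gamma \leq 1$ by looking at the diagonal block, and the trace constraint $\tr \gamma = N$ is untouched.

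For the second reduction, the data $(\gamma, 0) \in \cK(N)$ with $0 \leq \gamma \leq 1$ and $\tr\gamma = N$ is exactly the setting of Lieb's classical variational principle \cite{Lieb-81}, recorded also as Theorem 2.11 in \cite{BacLieSol-94}: for $W \geq 0$ and integer $N$, for every admissible $\gamma$ one can exhibit an orthogonal projection $\gamma'$ of rank $N$ with $\cE(\gamma', 0) \leq \cE(\gamma, 0)$. I would invoke this result as a black box rather than reproving it. Chaining the two inequalities then gives $\cE(\gamma, \alpha) \geq \cE(\gamma', 0)$ with $(\gamma', 0) \in \cP(N)$, and $I(N) = J(N)$ follows by taking infima.

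For the sharpened conclusion when $W > 0$ almost everywhere, I argue by tracing equality cases. If $(\gamma, \alpha)$ attains $I(N)$, the first reduction must be an equality, forcing $\int\int W |\alpha|^2 = 0$; together with $W > 0$ a.e. and $|\alpha(x,y)|^2 \geq 0$ this means $\alpha = 0$ in $L^2(\R^6)$. The strict version of Lieb's HF principle then forces $\gamma$ itself to be a rank-$N$ projection, i.e.\ $\gamma^2 = \gamma$. The main obstacle I expect is not in the new HFB-specific content, which is essentially free, but in applying Lieb's HF principle at the level of generality allowed by the form-bound hypothesis \eqref{eq:assumption_W_general}: one must verify that the reduction from a mixed density matrix to a rank-$N$ projection goes through for any $W \geq 0$ with $W \in L^p(\R^3)+L^q(\R^3)$, and that the equality case needed for the second assertion is available in this generality. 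Once that is granted, the proof is a compact two-step reduction.
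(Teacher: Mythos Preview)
The paper does not give its own proof of this theorem; it is simply quoted as a known result, with references to Lieb~\cite{Lieb-81} and to Theorem~2.11 in Bach--Lieb--Solovej~\cite{BacLieSol-94}. Your two-step reduction is exactly the standard argument behind those references: the pairing term $\tfrac12\iint W|\alpha|^2$ is manifestly nonnegative when $W\geq0$, so dropping $\alpha$ does not increase the energy and keeps $(\gamma,0)\in\cK(N)$; then Lieb's classical HF principle replaces the mixed $\gamma$ by a rank-$N$ projection $\gamma'$. Your treatment of the strict case $W>0$ a.e.\ via equality tracing is also correct. The technical caveat you flag---checking that Lieb's HF reduction and its equality case go through under the hypothesis~\eqref{eq:assumption_W_general}---is legitimate but routine: the proof in~\cite{BacLieSol-94} is purely algebraic on the level of density matrices and only needs the direct, exchange and kinetic terms to be finite, which Lemma~\ref{lem:bd-below} guarantees.
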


We see that for repulsive interactions, $W>0$, there is never pairing ($\alpha\equiv0$) and the ground state is always a pure HF state, that is, a Slater determinant. The fact that, in HF theory, one can minimize over mixed states and get the same ground state energy is very important from a numerical point of view. This was used by Cancès and Le Bris~\cite{CanBri-00a,CanBri-00} to derive well-behaved numerical strategies, to which we will come back later in Section~\ref{sec:ODA}.

For \emph{purely attractive interactions}, the following recent result of Bach, Fröhlich and Jonsson~\cite{BacFroJon-09} is relevant.

\begin{theorem}[HFB Constrained Variational Principle~\cite{BacFroJon-09}]\label{thm:variational_constrained}
Assume that the number of spin states is $q=2$ (spin-$1/2$ fermions), and that $W$ can be decomposed in the form
\begin{equation}
W(x-y)=-\int_\Omega d\mu(\omega) \1_{A_\omega}(x)\,\1_{A_\omega}(y)
\label{eq:form_W_BFJ}
\end{equation}
on a given measure space $(\Omega,\mu)$, with $A_\omega$ a measurable family of bounded domains in $\R^3$.
Let $N$ be any positive real number.
Then for any $(\gamma,\alpha)\in\cK(N)$, we have
\begin{equation}
\cE(\gamma,\alpha)\geq \cE(\gamma',\alpha'),
\end{equation}
with
\begin{equation}
\gamma'=g\otimes \begin{pmatrix}
1 &0\\ 0&1
\end{pmatrix},\qquad \alpha'=\sqrt{g(1-g)}\otimes\begin{pmatrix}
0 &1\\ -1&0
\end{pmatrix}
\label{eq:form_gamma_alpha} 
\end{equation}
(the matrices act on the spin variables), and 
$$g=g^T=\overline{g}=\frac{\gamma_{\uparrow\uparrow}+\gamma_{\downarrow\downarrow}+\overline{\gamma_{\uparrow\uparrow}}+\overline{\gamma_{\downarrow\downarrow}}}{4}.$$
This HFB state is pure: $(\gamma',\alpha')\in\cP(N)$. In particular, we have $I(N)=J(N)$.

Furthermore, if $W<0$ a.e., then any minimizer for $I(N)$, when it exists, is necessarily of the previous form.
\end{theorem}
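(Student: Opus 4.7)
The natural approach is to compute each piece of $\cE(\gamma,\alpha)$ in the $2\times 2$ spin-block decomposition $\gamma=(\gamma_{\sigma\tau})_{\sigma,\tau\in\{\uparrow,\downarrow\}}$, $\alpha=(\alpha_{\sigma\tau})$ (whose entries are operators on $L^2(\R^3)$), and compare term by term with $\cE(\gamma',\alpha')$. First come the easy identities. Since $-\Delta$ commutes with complex conjugation, $\tr(-\Delta)\overline{A}=\tr(-\Delta)A$, so $\tr(-\Delta)\gamma'=2\tr(-\Delta)g=\tr(-\Delta)\gamma$. The diagonal $\gamma_{\sigma\sigma}(x,x)$ is real by self-adjointness, giving $\rho_{\gamma'}(x)=2g(x,x)=\rho_\gamma(x)$, so the direct term is unchanged; $\tr\gamma'=2\tr g=\tr\gamma=N$; and a direct computation shows $\alpha'(\alpha')^*=g(1-g)\otimes I_2=\gamma'(1-\gamma')$, hence the corresponding $\Gamma$ satisfies $\Gamma^2=\Gamma$ and $(\gamma',\alpha')\in\cP(N)$ is automatically pure.

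For the exchange and pairing terms, the plan is to plug in the integral representation $-W(x-y)=\int d\mu(\omega)\,\1_{A_\omega}(x)\1_{A_\omega}(y)$ to rewrite
\[
-\int_{\R^3}\int_{\R^3}W(x-y)|\gamma(x,y)|^2\,dx\,dy=\int d\mu(\omega)\sum_{\sigma,\tau}\|\1_{A_\omega}\gamma_{\sigma\tau}\1_{A_\omega}\|_{\gS_2}^2,
\]
and analogously for $\alpha$. The exchange inequality then reduces to the pointwise-in-$\omega$ bound $\sum_{\sigma,\tau}\|P\gamma_{\sigma\tau}P\|_{\gS_2}^2\geq 2\|PgP\|_{\gS_2}^2$ with $P=\1_{A_\omega}$, which follows from the defining formula for $g$ by the triangle and Cauchy--Schwarz inequalities in Hilbert--Schmidt norm, together with $\|P\overline{A}P\|_{\gS_2}=\|PAP\|_{\gS_2}$. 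Since $W\leq 0$ this pushes the exchange contribution in the direction demanded by the theorem.

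The pairing term is the crux: one now needs the \emph{opposite} estimate $\sum_{\sigma,\tau}\|P\alpha_{\sigma\tau}P\|_{\gS_2}^2\leq 2\|P\sqrt{g(1-g)}P\|_{\gS_2}^2$. The inputs are the operator inequality $\alpha\alpha^*\leq\gamma(1-\gamma)$ coming from $0\leq\Gamma\leq 1$, and the transpose antisymmetry $\alpha^T=-\alpha$, which in the $q=2$ spin decomposition ties the diagonal and off-diagonal blocks of $\alpha$ in a way specific to two spin states; this coupling is what generates the factor $2$ on the right-hand side. One would then invoke the operator concavity of $t\mapsto\sqrt{t(1-t)}$ on $[0,1]$ to pass from $\sqrt{\gamma(1-\gamma)}$ to the spin-averaged $\sqrt{g(1-g)}$. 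Combining the four estimates yields $\cE(\gamma,\alpha)\geq\cE(\gamma',\alpha')$ and hence $I(N)=J(N)$; when $W<0$ almost everywhere, tracing the equality cases in all the above inequalities forces the off-diagonal spin blocks of $\gamma$ to vanish and $\alpha$ to have exactly the claimed singlet form with $h=\sqrt{g(1-g)}$.

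The main obstacle is clearly the pairing inequality: the naive bound $\|P\alpha\|_{\gS_2}^2\leq\|P\sqrt{\gamma(1-\gamma)}\|_{\gS_2}^2$ obtained by Cauchy--Schwarz is too weak because it lacks the right-most projection, so bridging the gap to the symmetric form $\|P(\cdot)P\|_{\gS_2}^2$ with $\sqrt{g(1-g)}$ in place of $\sqrt{\gamma(1-\gamma)}$ requires a genuine operator-theoretic argument that exploits the interplay between $0\leq\Gamma\leq 1$ and the antisymmetry of $\alpha$ in the two-dimensional spin space.
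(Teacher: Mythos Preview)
The paper does not prove this theorem at all: it is quoted from \cite{BacFroJon-09} and no argument is given in the present text, so there is nothing to compare your attempt against here. What you have written is the correct overall architecture of the Bach--Fr\"ohlich--Jonsson proof (split into kinetic, direct, exchange and pairing pieces; use the integral representation of $W$ to reduce the exchange and pairing comparisons to Hilbert--Schmidt inequalities with the projection $P=\1_{A_\omega}$), and your treatment of the kinetic term, the density, the trace constraint, purity of $(\gamma',\alpha')$, and the exchange inequality is fine.

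However, your proposal is not a proof: the pairing step is precisely where the content lies, and you explicitly leave it open. You say ``one would then invoke the operator concavity of $t\mapsto\sqrt{t(1-t)}$'', but concavity alone does not give you control of $\|P\sqrt{g(1-g)}P\|_{\gS_2}$ from $\alpha\alpha^*\le\gamma(1-\gamma)$: the square root is on the wrong side of the compression by $P$, and $A\le B$ does not imply $\|P\sqrt{A}P\|_{\gS_2}\le\|P\sqrt{B}P\|_{\gS_2}$ in general. You also correctly identify that the naive bound $\|P\alpha\|_{\gS_2}^2\le\|P\sqrt{\gamma(1-\gamma)}\|_{\gS_2}^2$ lacks the second $P$ and is therefore useless. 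What is missing is exactly the ``genuine operator-theoretic argument'' you allude to in your last paragraph; in \cite{BacFroJon-09} this is handled by first reducing to finite rank and then performing an explicit minimization over the pairing block at fixed $\gamma$, which produces $\alpha=\sqrt{\gamma(1-\gamma)}\otimes\begin{pmatrix}0&1\\-1&0\end{pmatrix}$ as the optimal choice and simultaneously takes care of the compression issue. Until that step is actually carried out, what you have is a plausible outline rather than a proof.
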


\begin{remark}\label{rmk:N_odd}
Note that $N$ does not have to be an even integer in this result. Since $\tr_{L^2(\R^3)}(g)=N/2$, the operator $g$ must have one eigenvalue different from 1 when $N$ is odd, and it follows that $\alpha\neq0$ in this special case.
\end{remark}

In Theorem~\ref{thm:variational_constrained}, we have decomposed the operator $\gamma$ acting on $L^2(\R^3\times\{\uparrow,\downarrow\},\C)$ according to the spin variables as follows:
$$\gamma=\begin{pmatrix}
\gamma_{\uparrow\uparrow}&\gamma_{\downarrow\uparrow}\\
\gamma_{\uparrow\downarrow}&\gamma_{\downarrow\downarrow}
\end{pmatrix}.$$
Theorem~\ref{thm:variational_constrained} says that when $W$ satisfies~\eqref{eq:form_W_BFJ}, one can minimize over states which are pure, real, and have a simple spin symmetry. The antisymmetry of $\alpha$ is only contained in the spin variables, hence the Cooper pairs are automatically in a singlet state.
Of course, one can express the total energy only in terms of the real operator $g$, as follows
\begin{multline*}
\cE(\gamma',\alpha')=2\;\tr_{L^2(\R^3)}(-\Delta)g\\
+\int_{\R^3}\int_{\R^3}W(x-y)\Big(2\rho_g(x)\rho_g(y)-|g(x,y)|^2+|\sqrt{g(1-g)}(x,y)|^2\Big). 
\end{multline*}
In practice it will be more convenient to keep a pairing term $a(x,y)$ not \emph{a priori} related to $g$ and to optimize over both $g$ and $a$, that is, to consider mixed states. When $W$ satisfies the assumptions of the theorem, any ground state will automatically lead to $a=\pm\sqrt{g(1-g)}$.

Let us conclude our comments on Theorem~\ref{thm:variational_constrained}, by noticing that several simple attractive potentials $W$ can be written in the form~\eqref{eq:form_W_BFJ}. For instance the Fefferman-de~la~Llave formula~\cite{FefLla-86}
$$\frac1{|x-y|}=\frac1\pi\int_0^\ii\frac{dr}{r^5}\int_{\R^3}dz\; \1_{B(z,r)}(x)\,\1_{B(z,r)}(y)$$
shows that a simple Newtonian interaction $W(x-y)=-|x-y|^{-1}$ is covered (here $\1_{B(z,r)}$ is the characteristic function of the ball centered at $z$, of radius $r$). Hainzl and Seiringer showed in~\cite{HaiSei-02} that any smooth enough radial function $W$ can be written in the form
$$W(x-y)=\int_0^\ii\,dr\, g(r)\int_{\R^3}dz\; \1_{B(z,r)}(x)\,\1_{B(z,r)}(y)$$
for some explicit function $g$, whose sign can easily be studied.

\subsection{Existence results and properties of minimizers}

Before turning to the discretization and the numerical study of the HFB minimization problem, we make some comments on the existence of minimizers. In addition to the infinite dimension and the nonlinearity of the model, an important difficulty is the invariance under translations. For instance, there are always minimizing sequences which do not converge strongly (assuming there exists a minimizer, one can simply translate it far away). 

Consider the electrons in an atom or in a molecule, with fixed classical nuclei (Born-Oppenheimer approximation). From the point of view of the electrons, the problem is no more translation-invariant, once the nuclei have been given a fixed position. Since the Coulomb interaction between the electrons is repulsive, $W(x-y)=|x-y|^{-1}$, Theorem~\ref{thm:Lieb} tells us that there is never pairing, $\alpha\equiv0$. In this case there are several existence results, starting with the fundamental works of Lieb and Simon~\cite{LieSim-77} and continuing with works by Lions~\cite{Lions-87}, Bach~\cite{Bach-92}, Solovej~\cite{Solovej-91,Solovej-03}. 

For interactions $W$ which have no particular sign, the pure HF problem was studied by Friesecke in~\cite{Friesecke-03} and by the first author of this paper in~\cite{Lewin-11}. There is an HVZ-type theorem for HF wavefunctions which states that some binding conditions imply the existence of minimizers (Theorem~22 in~\cite{Lewin-11}).

After the fundamental paper of Bach, Lieb and Solovej~\cite{BacLieSol-94} with its study of the Hubbard model, to our knowledge the existence of ground states for the HFB model with pairing was only studied recently by Lenzmann and the first author of this paper in~\cite{LenLew-10}. Some caricatures of HFB in nuclear physics had been previously considered by Gogny and Lions in~\cite{GogLio-86}.

\medskip

We give here an existence result for the variational problem $I(N)$. In some cases we have $I(N)=J(N)$ (see the previous section) and for this reason, we only concentrate on $I(N)$. The next theorem can be proved by following the method of~\cite{LenLew-10}, which dealt with the more complicated case of a pseudo-relativistic kinetic operator $\sqrt{1-\Delta}-1$. 

\begin{theorem}[Existence of minimizers and compactness of minimizing sequences]\label{thm:existence}
We assume as before that $W=W_1+W_2\in L^p(\R^3)+L^q(\R^3)$ with $2\leq p\leq q<\ii$. Let $\lambda>0$. Then the following assertions are equivalent:
\begin{enumerate}
\item All the minimizing sequences $(\gamma_n,\alpha_n)\subset\cK(\lambda)$ for $I(\lambda)$ are precompact up to translations, that is there exists a sequence $(x_k)\subset\R^3$ and $(\gamma,\alpha)\in\cK(\lambda)$ such that, for a subsequence, 
\begin{multline*}
\lim_{k\to\ii}\norm{(1-\Delta)^{1/2}(\tau_{x_k}\gamma_{n_k}\tau_{-x_k}-\gamma)(1-\Delta)^{1/2}}_{\gS_1}\\
=\lim_{k\to\ii}\norm{(1-\Delta)^{1/2}(\tau_{x_k}\alpha_{n_k}\tau_{-x_k}-\alpha)}_{\gS_2}=0. 
\end{multline*}
In particular $(\gamma,\alpha)$ is a minimizer for $I(\lambda)$.

\smallskip

\item The \emph{binding inequalities}
\begin{equation}
I(\lambda)<I(\lambda-\mu)+I(\mu) \qquad\text{for all $0<\mu<\lambda$}
\label{eq:binding}
\end{equation}
are satisfied.
\end{enumerate}

\medskip

Furthermore, if $W$ is Newtonian at infinity, that is
\begin{equation}
W(x)\leq -\frac{a}{|x|}\quad \text{for $a>0$ and $|x|\geq R$},
\label{assumption_LenLew_1}
\end{equation}
then the previous two equivalent conditions are verified.
\end{theorem}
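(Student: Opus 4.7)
The plan is to adapt Lions' concentration-compactness strategy to HFB states, following the scheme of~\cite{LenLew-10} where the pseudo-relativistic case is treated; the nonrelativistic kinetic operator $-\Delta$ being local makes the commutator estimates simpler here. Let $(\gamma_n, \alpha_n) \subset \cK(\lambda)$ be a minimizing sequence for $I(\lambda)$. Lemma~\ref{lem:bd-below} gives a uniform bound on $\tr(-\Delta)\gamma_n$, and the operator inequality $\alpha_n\alpha_n^* \leq \gamma_n - \gamma_n^2 \leq \gamma_n$ encoded in $0 \leq \Gamma_n \leq 1$ transfers this bound to $\alpha_n$. After extraction of a subsequence one obtains weak limits $(\gamma, \alpha)$ with the constraint $0 \leq \Gamma \leq 1$ preserved.

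For the implication $(2) \Rightarrow (1)$, I would apply Lions' concentration-compactness lemma to the densities $\rho_{\gamma_n} \geq 0$ with $\int \rho_{\gamma_n} = \lambda$. Vanishing is ruled out since it forces all interaction terms (direct, exchange, pairing) to vanish in the limit, hence $\liminf \cE(\gamma_n, \alpha_n) \geq 0$, whereas the strict inequalities~\eqref{eq:binding}, combined with a simple trial state exploiting the attractive part of $W$ giving $I(\mu_0) < 0$ for some $\mu_0 > 0$, force $I(\lambda) < 0$. To preclude dichotomy, choose a smooth partition of unity $\chi_R^2 + \eta_R^2 = 1$ and localize the full block matrix on $\gH \oplus \gH$ by setting $\Gamma_n^{(1)} = X_R \Gamma_n X_R$, $\Gamma_n^{(2)} = Y_R \Gamma_n Y_R$ with $X_R = \chi_R \oplus \chi_R$ and $Y_R = \eta_R \oplus \eta_R$. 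This \emph{joint} localization is the crucial HFB-specific ingredient: it preserves $0 \leq \Gamma_n^{(i)} \leq 1$, whereas separate localization of $\gamma$ and $\alpha$ would destroy the constraint. An IMS-type splitting together with the decay of $W$ at infinity (via the $L^p+L^q$ decomposition) yields
\begin{equation*}
\cE(\gamma_n, \alpha_n) \geq \cE(\gamma_n^{(1)}, \alpha_n^{(1)}) + \cE(\gamma_n^{(2)}, \alpha_n^{(2)}) + o_R(1) + o_n(1),
\end{equation*}
with $\tr \gamma_n^{(1)} \to \mu \in (0, \lambda)$ after suitable translations. Passing to the liminf contradicts~\eqref{eq:binding}. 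Hence compactness of $\rho_{\gamma_n}$ modulo translations holds, and mass conservation $\tr \gamma = \lambda$ combined with lower semicontinuity of $\cE$ upgrades the weak limit to the claimed strong convergence in the Sobolev-trace norms, thereby producing a minimizer.

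The reverse implication $(1) \Rightarrow (2)$ is standard: if~\eqref{eq:binding} fails at some $\mu \in (0, \lambda)$, choose compactly supported near-minimizers of $I(\mu)$ and $I(\lambda - \mu)$ and build the block HFB state obtained by translating the second one far away, producing a minimizing sequence that cannot be precompact modulo any single translation. Under the Newtonian tail~\eqref{assumption_LenLew_1}, the same block trial state at separation $R$ has exchange and pairing cross-terms vanishing identically for $R$ larger than the sum of the supports, while the direct cross-term behaves like $-a\,\mu(\lambda-\mu)/R + o(R^{-1})$, so $I(\lambda) \leq I(\mu) + I(\lambda - \mu) - c/R$ for $R$ large, giving strict binding. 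Care is needed when $I(\mu)$ or $I(\lambda-\mu)$ is not attained, but near-minimizers with fast spatial decay (obtained for instance by truncation of almost-minimizing sequences) suffice.

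The main technical obstacle throughout is ensuring that the joint block-matrix localization preserves $0 \leq \Gamma \leq 1$ while the kinetic energy splits additively up to commutator errors of order $O(R^{-1})$, and that the direct, exchange, and pairing cross-terms genuinely vanish as $R \to \infty$ under the sole assumption $W \in L^p + L^q$. These estimates follow the pattern developed in~\cite{LenLew-10}, with simplifications afforded by the locality of $-\Delta$.
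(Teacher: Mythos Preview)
Your sketch is sound and follows precisely the route the paper indicates: the paper does not give its own proof of this theorem but simply states that it ``can be proved by following the method of~\cite{LenLew-10}'', and your proposal is exactly an adaptation of the Lenzmann--Lewin concentration-compactness argument to the nonrelativistic setting, with the key HFB-specific device being the joint block-matrix localization of $\Gamma$ to preserve the constraint $0\leq\Gamma\leq1$. One minor point: in ruling out vanishing you invoke $I(\mu_0)<0$ via ``the attractive part of $W$'', but the hypothesis is only $W\in L^p+L^q$; the cleaner argument is that the binding inequalities~\eqref{eq:binding} themselves force $I(\lambda)<0$, since $I(\cdot)\leq0$ always (by dilation) and equality $I(\lambda)=0$ would make~\eqref{eq:binding} fail.
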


The assumption that the interaction is Newtonian at infinity is a big simplification, as it means that two subsystems receiding from each other always attract at large distances. One can expect that minimizers exist even if the potential is not attractive at infinity, as soon as it has a sufficiently large negative component. A typical effective potential $W(x)$ used in nuclear physics is nonnegative for small $|x|$, and has a negative well at intermediate distances~\cite{RinSch-80}. At infinity it typically decays like $+\kappa|x|^{-1}$ for two protons, and exponentially fast when one of the two particles is a neutron. Even in HF theory, we are not aware of any existence result dealing with such potentials, however. We will numerically investigate a model of this form in Section~\ref{sec:Coulomb-perturbed}.

The form of the nonlinear equation solved by minimizers is well-known in the physics literature, and it was re-explained in~\cite{BacLieSol-94}. The following result summarizes some known properties.

\begin{theorem}[HFB equation and properties of minimizers~\cite{BacLieSol-94,LenLew-10}]
A HFB minimizer on $\cK(N)$ solves the nonlinear equation
\begin{equation}
\Gamma=\1_{(-\ii,0)}\big(F_\Gamma-\mu\cN\big)+\delta
\label{eq:HFB_equation}
\end{equation}
where $0\leq\delta\leq \1_{\{0\}}(F_\Gamma-\mu\cN)$ has the same form as $\Gamma$, and where 
\begin{equation}
\cN:=\begin{pmatrix}
1&0\\ 0 & -1
\end{pmatrix},\quad 
F_\Gamma=\begin{pmatrix}
h_\gamma&\pi\\ \pi^*&-\overline{h_\gamma}
\end{pmatrix}
\end{equation}
with $h_\gamma=-\Delta+\rho_\gamma\ast W-W(x-y)\gamma(x,y)$ and $\pi(x,y)=\alpha(x,y)W(x-y)$. 

If $W(x-y)=-\kappa|x-y|^{-1}$ (Newtonian interaction) and $N$ is an integer, then all the minimizers are of the special form~\eqref{eq:form_gamma_alpha}. In this case, we have either $\alpha\equiv0$ and $\gamma$ is a projector of rank $N$, or $\alpha\neq0$ and $\gamma$ has an infinite rank.
\end{theorem}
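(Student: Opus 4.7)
The first part (the HFB equation) is a convex Lagrange-multiplier computation that was carried out in detail in~\cite{BacLieSol-94}. The plan is to write the energy directly in terms of the generalized density matrix $\Gamma$, so that the differential at $\Gamma$ takes the compact form
\[
d\cE(\gamma,\alpha)=\tfrac12\,\tr\big(F_\Gamma\,d\Gamma\big),
\]
where one recognizes $h_\gamma$ as the block coming from the direct and exchange contributions in $\gamma$, and $\pi=W\alpha$ as the block coming from the pairing term. The minimization is performed over the convex set $\cK(N)\subset\{0\leq\Gamma\leq 1\}$ with the linear constraint $\tr\gamma=N$, so I introduce a Lagrange multiplier $\mu\in\R$ for the particle number. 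Then $\Gamma$ minimizes a linearized functional of the form $\tr\bigl((F_\Gamma-\mu\cN)\Gamma'\bigr)$ over $\{0\leq\Gamma'\leq 1\}$, and the standard spectral lemma (any $0\leq\Gamma\leq 1$ minimizing $\tr(H\Gamma)$ must be $\1_{(-\infty,0)}(H)$ plus an operator $\delta$ supported on $\ker H$ with $0\leq\delta\leq\1_{\{0\}}(H)$) yields~\eqref{eq:HFB_equation}. One must check that $\delta$ automatically inherits the block structure of $\Gamma$, which follows from the particle-hole symmetry of $F_\Gamma-\mu\cN$.

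For the second part I specialize to the Newtonian interaction $W(x-y)=-\kappa|x-y|^{-1}$ and $N\in\N$. The Fefferman--de~la~Llave representation recalled just before the statement shows that $W$ satisfies~\eqref{eq:form_W_BFJ} with $W<0$ a.e., hence Theorem~\ref{thm:variational_constrained} forces any minimizer to have the spin-singlet form~\eqref{eq:form_gamma_alpha}. Thus the problem reduces to the properties of the real operator $g$ with $\tr g=N/2$, and $\alpha\equiv 0$ is equivalent to $g(1-g)=0$. If $g$ is a projector then $g$ has rank $N/2$ (forcing $N$ to be even, consistently with Remark~\ref{rmk:N_odd}), so $\gamma=g\otimes I_2$ has rank $N$.

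The remaining claim—that $\alpha\neq 0$ implies $\gamma$ has infinite rank—is the main obstacle and will be treated by following the strategy of~\cite{LenLew-10}. The plan is to argue by contradiction: if $\gamma$ had finite rank $r<\infty$, then $h_\gamma=-\Delta+\rho_\gamma\ast W-W(x-y)\gamma(x,y)$ would differ from $-\Delta$ by a relatively compact perturbation, so $\sigma_{\rm ess}(h_\gamma)=[0,\infty)$. Since $\alpha\neq 0$ forces $\gamma\neq\gamma^2$ and standard arguments on the HFB equation give $\mu<0$ in this regime, the lower-right block $-\overline{h_\gamma}+\mu$ contributes an essential spectrum $(-\infty,\mu]\subset(-\infty,0)$ to $F_\Gamma-\mu\cN$. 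One then shows that the coupling $\pi=W\alpha$ is non-trivial in the sense that the negative spectral subspace of $F_\Gamma-\mu\cN$ cannot be contained in a finite-dimensional extension of $\{0\}\oplus\gH$: indeed, if it were, one could use $\Gamma=\1_{(-\infty,0)}(F_\Gamma-\mu\cN)+\delta$ together with the identity $\gamma\alpha=\alpha\overline\gamma$ (a consequence of $\Gamma^2=\Gamma$ on the range of $\Gamma$) to deduce $\pi\,v=0$ for an infinite-dimensional family of lower vectors $v$, contradicting the explicit form $\alpha=\sqrt{g(1-g)}\otimes i\sigma_y$ with $g$ having only finitely many spectral values. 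The delicate step will be controlling what happens in the kernel $\delta$ and making the ``coupling implies mixing'' argument quantitative; I expect that this is where one reproduces, mutatis mutandis, the corresponding analysis of Section~5 in~\cite{LenLew-10}.
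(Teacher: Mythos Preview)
The paper does not prove this theorem: it is stated with citations to~\cite{BacLieSol-94,LenLew-10} and followed only by commentary. So there is no ``paper's own proof'' to compare against, and your proposal should be judged on its own.

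Your sketch for the first part (the Euler--Lagrange equation via linearization and the spectral characterization of minimizers of $\Upsilon\mapsto\tr(H\Upsilon)$ on $\{0\leq\Upsilon\leq1\}$) is the standard argument from~\cite{BacLieSol-94} and is fine. For the second part, invoking the Fefferman--de~la~Llave decomposition to feed the Newton potential into Theorem~\ref{thm:variational_constrained} is exactly right, and the dichotomy ``$\alpha=0\Leftrightarrow g^2=g$'' is immediate from~\eqref{eq:form_gamma_alpha}.

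The infinite-rank claim is where your proposal has a genuine gap. Your plan is: assume $\gamma$ (hence $\alpha$) has finite rank, observe that the negative spectral subspace of $F_\Gamma-\mu\cN$ would then be a finite-dimensional extension of $\{0\}\oplus\gH$, deduce $\pi v=0$ for an infinite-dimensional family of $v$'s, and claim this contradicts $\alpha=\sqrt{g(1-g)}\otimes i\sigma_y$ with $g$ of finite rank. The deduction $\pi v=0$ is correct (if $(0,v)$ is an eigenvector then the top row of the eigenvalue equation gives exactly this). But the final contradiction is not established: you have not shown that the operator $\pi$ with kernel $-\kappa|x-y|^{-1}\alpha(x,y)$ has finite-dimensional kernel when $\alpha\neq0$ is finite rank. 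Multiplying a finite-rank kernel by $|x-y|^{-1}$ destroys the finite-rank property, but it does not by itself force the resulting operator to be injective (or to have small kernel). What is actually needed here is a unique-continuation argument for the Newton potential: if $a(x,y)=\sum_i\lambda_i\phi_i(x)\phi_i(y)$ and $\int|x-y|^{-1}a(x,y)v(y)\,dy=0$ for all $x$, one must exploit the real-analyticity of $y\mapsto|x-y|^{-1}$ (or the fact that the Riesz potential of a compactly supported function cannot vanish on an open set unless the function does) to conclude $v\perp\text{something}$ finite-dimensional. This is the missing ingredient; your appeal to ``Section~5 of~\cite{LenLew-10}'' is a placeholder rather than an argument, and you should identify precisely which property of the Newton kernel is being used there and reproduce it.

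Also, the assertion ``standard arguments on the HFB equation give $\mu<0$ in this regime'' deserves a line of justification; it is true for the Newtonian system (the minimizer has negative energy per particle) but is not automatic from the abstract setup.
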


The nonlinear equation~\eqref{eq:HFB_equation} is in principle similar to the usual equation obtained in HF theory, 
\begin{equation}
\gamma=\1_{(-\ii,\mu)}(h_\gamma)+\delta
\label{eq:HF_equation} 
\end{equation}
Indeed,~\eqref{eq:HFB_equation} reduces to~\eqref{eq:HF_equation} when $\alpha\equiv0$. Let us however emphasize that the mean-field operator $F_\Gamma$ has a spectrum which is symmetric with respect to 0. Hence $F_\Gamma$ is usually not even semi-bounded, on the contrary to $h_\gamma$ which is always bounded from below. Furthermore, the operator $\cN$ does \emph{not} commute with $F_\Gamma$ (except when $\alpha\equiv0$) and the equation cannot be written in a simple form as in HF theory. This will cause several difficulties to which we will come back at length later.

The fact that $\gamma$ has an infinite rank when there is pairing, $\alpha\neq0$, is a dramatic change of behavior compared to the simple HF case. However, no information on the decay of the eigenvalues of $\gamma$ seems to be known.

An important open question is to show that minimizers actually exhibit non-vanishing pairing $\alpha \neq 0$, at least for a sufficiently strong attractive potential $W$. On heuristic grounds, one expects such a phenomenon of {\em ``Cooper pair formation''} to be energetically favorable due to the attractive interaction among particles. However, it seems to be a formidable task to find mathematical proof for this claim. The existence of pairing is known in some particular situations (when $N$ is odd and $W$ is Newtonian, see Remark~\ref{rmk:N_odd}, for the Hubbard model~\cite{BacLieSol-94}, or in translation-invariant BCS theory~\cite{BilFan-68,Vansevenant-85,Yang-91,LeoYan-00,FraHaiNabSei-07,HaiHamSeiSol-08,HaiSei-08}), but for the model presented here, we are not aware of any result of this sort. One of the purposes of this paper is to investigate this question numerically.

\section{Discretized Hartree-Fock-Bogoliubov theory}\label{sec:discretized}

In this section, we write and study the Hartree-Fock-Bogoliubov model in a discrete basis.

\subsection{Convergence analysis}
Here we show that the ground state HFB energy in a finite basis converges to the true HFB ground state energy when the size of the basis grows. We consider a sequence of finite-dimensional spaces $V_h\subset H^1(\R^3,\C^q)$ for $h\to0$. We assume that any function $f\in H^1(\R^3,\C^q)$ can be approximated by functions from $V_h$:
\begin{equation}
\forall f\in H^1(\R^3,\C^q),\quad \exists f_h\in V_h\quad\text{ such that }\quad \norm{f-f_h}_{H^1}\underset{h\to0}{\longrightarrow}0.
\label{assumption_V_h}
\end{equation}
We typically think of a sequence $V_h$ given by the Finite Elements Method.
Let $\pi_h$ denote the orthogonal projection on $V_h$ in $L^2(\R^3,\C^q)$. We define the set of density matrices living on $V_h$ (with average particle number $N$) as follows
\begin{equation}
\cK_h(N)=\big\{ (\gamma,\alpha)\in\cK(N)\ :\ \pi_h\gamma\pi_h=\gamma,\ \pi_h\alpha\,\overline{\pi_h}=\alpha\big\}. 
\end{equation}
The corresponding minimization problem is now
\begin{equation}
I_h(N)=\inf_{(\gamma,\alpha)\in\cK_h(N)}\cE(\gamma,\alpha).
\end{equation}
Since $\cK_h(N)\subset\cK(N)$ by definition, it is obvious that $I_h(N)\geq I(N)$. The following result is a consequence of Theorem~\ref{thm:existence}.

\begin{theorem}[Convergence of the approximate HFB problem]
When $W=W_1+W_2\in L^p(\R^3)+L^q(\R^3)$ with $2\leq p\leq q<\ii$ and under Assumption~\eqref{assumption_V_h} on the sequence $(V_h)$, we have
\begin{equation}
 \lim_{h\to0}I_h(N)=I(N).
\end{equation}
If the binding inequality~\eqref{eq:binding} is satisfied, then any sequence of minimizers $(\gamma_h,\alpha_h)\in\cK_h(N)$ for $I_h(N)$ converges, up to a subsequence and up to a translation, to a minimizer $(\gamma,\alpha)\in\cK(N)$ of $I(N)$, in the sense that
\begin{multline}
\lim_{h_k\to0}\norm{(1-\Delta)^{1/2}(\tau_{x_k}\gamma_{h_k}\tau_{-x_k}-\gamma)(1-\Delta)^{1/2}}_{\gS_1}\\
=\lim_{h_k\to0}\norm{(1-\Delta)^{1/2}(\tau_{x_k}\alpha_{h_k}\tau_{-x_k}-\alpha)}_{\gS_2}=0.
\label{eq:CV_discretized} 
\end{multline}
\end{theorem}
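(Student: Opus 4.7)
The inclusion $\cK_h(N)\subset\cK(N)$ gives $I_h(N)\geq I(N)$ for free, so the main work is the converse inequality $\limsup_{h\to 0}I_h(N)\leq I(N)$. I would approach it by a standard trial-state argument: given $\epsilon>0$, pick $(\gamma,\alpha)\in\cK(N)$ with $\cE(\gamma,\alpha)\leq I(N)+\epsilon$, construct a sequence $(\gamma_h,\alpha_h)\in\cK_h(N)$ with $\cE(\gamma_h,\alpha_h)\to\cE(\gamma,\alpha)$, and let $\epsilon\to 0$.

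For the construction I would first reduce, by a density argument, to the case where the $2\times 2$ block operator
$$\Gamma=\begin{pmatrix}\gamma&\alpha\\ \alpha^*&1-\overline{\gamma}\end{pmatrix}$$
differs from $0\oplus 1_{\gH}$ by a finite-rank operator whose range lies in $H^1\oplus H^1$. The state is then encoded by a finite orthonormal family $\Phi_1,\dots,\Phi_n\in\gH\oplus\gH$ of $H^1$-vectors, together with scalars in $[0,1]$. Using~\eqref{assumption_V_h} I would approximate each $\Phi_i$ by some $\Phi_{i,h}\in V_h\oplus V_h$ in $H^1$-norm, apply Gram--Schmidt (which preserves $H^1$-convergence) to restore orthonormality, and reassemble with the same scalars into a block operator $\Gamma_h$. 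This construction automatically gives $0\leq\Gamma_h\leq 1$, preserves the Bogoliubov antisymmetry $\alpha_h^T=-\alpha_h$, and has $\tr\gamma_h=\tr\gamma=N$, so $(\gamma_h,\alpha_h)\in\cK_h(N)$. By dominated convergence on the finite-rank spectral decomposition one then obtains $(1-\Delta)^{1/2}(\gamma-\gamma_h)(1-\Delta)^{1/2}\to 0$ in $\gS_1$ and $(1-\Delta)^{1/2}(\alpha-\alpha_h)\to 0$ in $\gS_2$, and each term of the energy~\eqref{eq:def_energy} is continuous for these norms, by exactly the Kato-type estimates already used in the proof of Lemma~\ref{lem:bd-below}. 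Hence $\cE(\gamma_h,\alpha_h)\to\cE(\gamma,\alpha)$, which closes the upper bound.

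Once $I_h(N)\to I(N)$ is established, the compactness statement is essentially free. Any sequence of minimizers $(\gamma_h,\alpha_h)\in\cK_h(N)\subset\cK(N)$ satisfies $\cE(\gamma_h,\alpha_h)=I_h(N)\to I(N)$, so it is a minimizing sequence for $I(N)$ inside $\cK(N)$. Under the binding inequality~\eqref{eq:binding}, Theorem~\ref{thm:existence} directly delivers the precompactness up to translation in the $H^1$-Schatten norms~\eqref{eq:CV_discretized}, together with the fact that the limit is itself a minimizer of $I(N)$.

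The delicate point is the trial-state construction: one must simultaneously preserve the three intertwined constraints $0\leq\Gamma\leq 1$, $\alpha^T=-\alpha$, and $\tr\gamma=N$ when projecting onto the Galerkin subspace $V_h$, with enough $H^1$-control for the quartic functional $\cE$ to pass to the limit. The naive choice $(\pi_h\gamma\pi_h,\pi_h\alpha\overline{\pi_h})$ preserves $0\leq\Gamma_h\leq 1$ trivially but yields $\tr\gamma_h<N$; this is why I would rather work directly with the eigenvectors of the block $\Gamma$ and use Gram--Schmidt orthonormalization in $V_h\oplus V_h$, which makes the normalization automatic, while the $H^1$-convergence of the eigenvectors is guaranteed by~\eqref{assumption_V_h}.
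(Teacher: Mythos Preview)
Your overall strategy is correct and matches the paper's: reduce to showing $\limsup_{h\to0} I_h(N)\leq I(N)$ via a finite-rank trial-state argument, then invoke Theorem~\ref{thm:existence} for the compactness. However, your trial-state construction in the doubled space $\gH\oplus\gH$ has a genuine gap.

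The problem is your claim that approximating the eigenvectors $\Phi_i$ of $\Gamma$ in $V_h\oplus V_h$ and applying Gram--Schmidt ``automatically'' preserves the Bogoliubov structure and the trace constraint. Neither is automatic. For the reassembled operator $\Gamma_h$ to again be an HFB density matrix (lower-right block equal to $1-\overline{\gamma_h}$ and $\alpha_h^T=-\alpha_h$), the eigenvectors must occur in $J$-conjugate pairs with eigenvalues $\lambda$ and $1-\lambda$, where $J(u,v)=(\overline v,\overline u)$; an arbitrary Gram--Schmidt in $\gH\oplus\gH$ destroys this pairing. Even if you enforce the $J$-pairing by hand, the constraint $\tr\gamma_h=N$ is still not preserved: for a normalized eigenvector $\Phi=(u,v)$ the pair $(\Phi,J\Phi)$ contributes $\lambda\|u\|^2+(1-\lambda)\|v\|^2$ to $\tr\gamma$, and this depends on the \emph{individual} norms $\|u\|$ and $\|v\|$, over which orthonormalization of the full vectors in $\gH\oplus\gH$ gives no control.

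The paper sidesteps all of this by working in $\gH$ rather than in $\gH\oplus\gH$. It takes an orthonormal basis $(f_i)$ of the finite-dimensional range of $\gamma$ (which by L\"owdin's theorem also carries $\alpha$), approximates each $f_i$ in $V_h$ via~\eqref{assumption_V_h}, Gram--Schmidt orthonormalizes \emph{in $\gH$} to obtain $(g_i^h)$, and then conjugates by any unitary $U_h$ on $\gH$ sending $f_i$ to $g_i^h$: one sets $\gamma_h=U_h\gamma U_h^*$ and $\alpha_h=U_h\alpha U_h^T$. This is a Bogoliubov rotation of the special diagonal form $\mathrm{diag}(U_h,\overline{U_h})$ on $\gH\oplus\gH$, which \emph{trivially} preserves the HFB constraints, the antisymmetry of $\alpha$, and $\tr\gamma_h=\tr\gamma=N$. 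The energy convergence then follows from the $H^1$-convergence of the $g_i^h$ exactly as you outline.
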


\begin{proof}
We only have to show that $I_h(N)\to I(N)$ as $h\to0$. Then, any sequence of exact minimizers $(\gamma_h,\alpha_h)$ for $I_h(N)$ is also a minimizing sequence for $I(N)$. Applying Theorem~\ref{thm:existence} concludes the proof.

We know that finite-rank operators are dense in $\cK(N)$. Let $(\gamma,\alpha)\in\cK(N)$ be any such finite rank operator. Let $(f_i)_{i=1}^K$ be an orthonormal basis of the range of $\gamma$. By Löwdin's theorem (Lemma 13 in~\cite{Lewin-11}), we know that the two-body wavefunction $\alpha$ can be expanded in the same basis $(f_1,...,f_K)$. Now, for every $i=1,..,K$, we apply~\eqref{assumption_V_h} and take a sequence $f_i^h\in V_h$ be such that $f_i^h\to f_i$ in $H^1(\R^3)$. The system $(f_i^h)_{i=1}^k$ is not necessarily orthonormal but we have $\pscal{f_i^h,f_j^h}\to\pscal{f_i,f_j}=\delta_{ij}$. Applying the Gram-Schmidt procedure, we can therefore replace the $(f_i^h)_{i=1}^K$ by an orthonormal set $(g_i^h)_{i=1}^K\subset V_h$ having the same properties. An equivalent procedure is to take
$g_i^h=\sum_{j=1}^K(S^{-1/2}_h)_{ji}f_j^h$ where $S_h$ is the Gram matrix $(\pscal{f_i^h,f_j^h})$. Let now $U_h$ be any unitary operator on $L^2(\R^3)$ which is such that $U_hf_i=g_i^h$ for all $i=1,...,K$. We then take $\gamma_h:=U_h\gamma U_h^*$ and $\alpha_h:=U_h\alpha U_h^T$. In words, we just replace the $f_i$ by $g_i^h$ in the decomposition of $\gamma$ and $\alpha$. To see that $(\gamma_h,\alpha_h)\in\cK(N)$, we just notice that
$$\begin{pmatrix}
\gamma_h & \alpha_h\\
\alpha_h^* & 1-\overline{\gamma_h}
  \end{pmatrix}
=
\begin{pmatrix}
U_h & 0\\
0 & \overline{U_h}
  \end{pmatrix}
\begin{pmatrix}
\gamma & \alpha\\
\alpha^* & 1-\overline{\gamma}
  \end{pmatrix}
\begin{pmatrix}
U_h^* & 0\\
0 & \overline{U_h}^*
  \end{pmatrix}.$$
Note also that $\tr(\gamma_h)=\tr(\gamma)=N$ since $U_h$ is unitary. Now $\gamma_h$ and $\alpha_h$ belong to $\cK_h(N)$ by definition, hence we have that 
$\cE(\gamma_h,\alpha_h)\geq I_h(N)$. On the other hand, by the convergence of $f_i^h$ (hence of $g_i^h$) towards $f_i$ in $H^1(\R^3)$, we easily see that 
$$\lim_{h\to0}\cE(\gamma_h,\alpha_h)=\cE(\gamma,\alpha)$$
by continuity of $\cE$.
Therefore we have proved that 
$$\limsup_{h\to0}I_h(N)\leq \cE(\gamma,\alpha).$$
This is valid for all finite-rank $(\gamma,\alpha)\in\cK(N)$, hence we deduce that 
$$\limsup_{h\to0}I_h(N)\leq \inf_{(\gamma,\alpha)\in\cK(N)}\cE(\gamma,\alpha)=I(N).$$
On the other hand the inequality $I_h(N)\geq I(N)$ is always satisfied, hence we have proved the claimed convergence $I_h(N)\to I(N)$.
\end{proof}

\subsection{Discretization}

In this section we compute the HFB energy $\cE(\gamma,\alpha)$ of a discretized state $(\gamma,\alpha)\in\cK_h(N)$ and we write the corresponding self-consistent equation. We fix once and for all the approximation space $V_h$ and we consider a basis set $(\chi_i)_{i=1}^{N_b}$ of $V_h$, which is not necessarily orthonormal. We will assume that $V_h$ is stable under complex conjugation, which means that $f\in V_h\Rightarrow \overline{f}\in V_h$ (this amounts to replacing $V_h$ by ${\rm Span}(V_h,\overline{V_h})$). Then we can choose a basis $(\chi_i)_{i=1}^{N_b}$ which is real, that is $\overline{\chi_i}=\chi_i$ for all $i=1,...,N_b$. This will dramatically simplify the calculation below.

Since $\pi_h\gamma\pi_h=\gamma$ and $\pi_h\alpha\overline{\pi_h}=\alpha$, we can write the kernels of $\gamma$ and $\alpha$ as follows
\begin{equation}
\gamma(x,y)_{\sigma,\sigma'}=\sum_{i,j=1}^{N_b}G_{ij}\,\chi_i(x)_{\sigma}\overline{\chi_j(y)_{\sigma'}},\qquad \alpha(x,y)_{\sigma,\sigma'}=\sum_{i,j=1}^{N_b}A_{ij}\,\chi_i(x)_{\sigma}\chi_j(y)_{\sigma'}.
\label{eq:def_G_A}
\end{equation}
The complex conjugation on $\overline{\chi_j}$ is superfluous but we keep it to emphasize the difference between $\gamma$ and $\alpha$.
The matrices $G$ and $A$ (defined by the previous relation) satisfy the constraints $G^*=G$ and $A^T=-A$. Note that since $A$ is antisymmetric, we can also write
\begin{align*}
\alpha(x,y)_{\sigma,\sigma'}&=\sum_{1\leq i<j\leq N_b}A_{ij}\,\big(\chi_i(x)_{\sigma}\chi_j(y)_{\sigma'}-\chi_j(x)_{\sigma}\chi_i(y)_{\sigma'}\big)\\
&=\sqrt{2}\sum_{1\leq i<j\leq N_b}A_{ij}\,\chi_i\wedge\chi_j(x,\sigma,y,\sigma') 
\end{align*}
where $(\chi_i\wedge\chi_j)(x,\sigma;y,\sigma'):=\big(\chi_i(x)_\sigma\chi_j(y)_{\sigma'}-\chi_i(y)_{\sigma'}\chi_j(x)_{\sigma}\big)/\sqrt{2}$ is a two-body Slater determinant. Let us also remark that~\eqref{eq:def_G_A} can be written in the operator form
$$\gamma=\sum_{i,j=1}^{N_b}G_{ij}\;|\chi_i\rangle\langle\chi_j|,\qquad \alpha=\sum_{i,j=1}^{N_b}A_{ij}\;|\chi_i\rangle\langle\overline{\chi_j}|.$$
Again the complex conjugation on $\overline{\chi_j}$ is superfluous.

Let us define the so-called \emph{overlap matrix} $S=S^*=\overline{S}$ by
\begin{equation}
S_{ij}=\pscal{\chi_i,\chi_j}_{\gH}=\sum_{\sigma=1}^q\int_{\R^3}\!\!\overline{\chi_i(x)_\sigma}\chi_j(x)_\sigma\,dx=\sum_{\sigma=1}^q\int_{\R^3}\!\!\chi_i(x)_\sigma\chi_j(x)_\sigma\,dx.
\end{equation}
It is tedious but straightforward to verify that the constraint 
$$\begin{pmatrix}
0&0\\ 0&0\end{pmatrix}
\leq 
\Gamma:=\begin{pmatrix}
\gamma&\alpha\\ \alpha^*&1-\overline{\gamma}\end{pmatrix}
\leq
\begin{pmatrix}
1&0\\ 0&1\end{pmatrix}$$
can be written for the matrices $G$ and $A$ in the form
\begin{equation}
 \begin{pmatrix}
0&0\\ 0&0\end{pmatrix}
\leq 
\begin{pmatrix}
SGS&SAS\\ SA^*S&\ \ S-S\overline{G}S
\end{pmatrix}
\leq
\begin{pmatrix}
S&0\\ 0&S\end{pmatrix}
\label{eq:discretized_constraint}
\end{equation}
or, equivalently,
\begin{equation}
0\leq 
\Upsilon\bS\Upsilon
\leq
\Upsilon
\label{eq:discretized_constraint2}
\end{equation}
where $\Upsilon$ and $\bS$ are defined by
\begin{equation}
\Upsilon:=\begin{pmatrix}
G&A\\ A^* & \ \ S^{-1}-\overline{G}
\end{pmatrix}\quad\text{ and }\quad
\bS:=\begin{pmatrix}
S & 0\\
0 & S
\end{pmatrix}.
\label{eq:def_Gamma_bigS}
\end{equation}
Another way to write the constraint is $0\leq\bS^{1/2}\Upsilon\bS^{1/2}\leq 1$.
Let us notice that we have used everywhere the fact that $S=S^*=\overline{S}=S^T$. The formulas are much more complicated when $S$ is not real. 

The energy can be expressed in terms of the matrices $G$ and $A$, as well. A calculation shows that
\begin{equation}
\cE(\gamma,\alpha)=\tr(hG)+\frac12\tr(G\,J(G))-\frac12\tr(G\,K(G))+\frac12\tr(A^*\,X(A)).
\label{eq:discretized_energy}
\end{equation}
The trace here is the usual one for $N_b\times N_b$ matrices. As we think that there is no possible confusion with $\cE(\gamma,\alpha)$, we will also denote by $\cE(G,A)$ this discretized energy functional.
In formula~\eqref{eq:discretized_energy}, 
$$h_{ij}=\pscal{\chi_i,(-\Delta)\chi_j}=\sum_{\sigma=1}^q\int_{\R^3}\nabla\chi_i(x)_\sigma\cdot \nabla\chi_j(x)_\sigma\,dx,$$
\begin{equation}
J(G)_{ij}=\sum_{k,\ell=1}^{N_b}(ij|\ell k)\, G_{k\ell},\qquad K(G)_{ij}=\sum_{k,\ell=1}^{N_b}(ik|\ell j)\, G_{k\ell},\qquad X(A)_{ij}=\sum_{k,\ell=1}^{N_b}(ik|j\ell)\, A_{k\ell},
\label{eq:def_direct_echange_appar}
\end{equation}
and
\begin{equation}
(ij|k\ell):=\int_{\R^3}\int_{\R^3}W(x-y)\, \chi_i(x)^*\;\chi_j(x)\; \chi_k(y)^*\;\chi_\ell(y)\,dx\,dy.
\label{eq:def_echange}
\end{equation}
We use here the notation $a^*b=\sum_{\sigma=1}^q\overline{a}_\sigma\,b_\sigma$ (but the complex conjugation is superfluous for our real basis, hence $K=X$). Similarly, we have
\begin{equation}
\tr(\gamma)=\tr(SG).
\label{eq:trace}
\end{equation}
We define the discretized number operator as
\begin{equation}
\bN=\begin{pmatrix}
S & 0\\ 0 & -S
\end{pmatrix}
\label{eq:def_N}
\end{equation}
The constraint $\tr(\gamma)=N$ can be written equivalently as
$$\tr(\bN\Upsilon)=2N-N_b$$

We deduce from this calculation that the variational problem $I_h(N)$ can be written in finite dimension as
\begin{equation}
I_h(N)=\min\Big\{\cE(G,A)\ :\ 0\leq\Upsilon\bS\Upsilon\leq\Upsilon,\ \tr(\bN\Upsilon)=2N-N_b\Big\}
\label{eq:value_I_h}
\end{equation}
where we recall that $\Upsilon$ and $\bS$ have been defined in~\eqref{eq:def_Gamma_bigS}. Here the infimum is always attained because the problem is finite dimensional.

In this form, the discretized problem is very similar to the usual discretized Hartree-Fock problem~\cite{CanDefKutLeBMad-03,LeBris-05}, in dimension $2N_b$ instead of $N_b$. There is a big difference, however. In HF theory the constraint involves the matrix $\bS$ instead of $\bN$. This difference will cause several difficulties. To understand the problem, let us introduce a new variable $\Upsilon'=\bS^{1/2}\Upsilon\bS^{1/2}$ (which is the same as orthonormalize the basis $(\chi_i)$). Then the constraint $0\leq\Upsilon\bS\Upsilon\leq\Upsilon$ is transformed into $0\leq\Upsilon'\leq1$. However, the constraint on the number of particles becomes $\tr(\bS^{-1/2}\bN\bS^{-1/2}\Upsilon')=2N-N_b$. In usual Hartree-Fock theory, the matrix $\bS^{-1/2}\bN\bS^{-1/2}$ is replaced by the identity. The fact that this matrix then commutes with the Fock Hamiltonian (defined below) simplifies dramatically the self-consistent equations. Here we get
$$\bS^{-1/2}\bN\bS^{-1/2}=\begin{pmatrix}
1&0\\ 0&-1
\end{pmatrix}$$
which commutes with the Fock Hamiltonian if and only if $A\equiv0$. 

The self-consistent equation is obtained like in~\cite{BacLieSol-94}. The result is as follows.

\begin{lemma}[Discretized HFB equation]
Let $\Upsilon$ be a minimizer for the variational problem $I_h(N)$. Then there exists $\mu\in\R$ such that $\Upsilon$ solves the linear problem
\begin{equation}
\min\Big\{\tr(\bF_\Upsilon-\mu\bN)\tilde{\Upsilon}\ :\ 0\leq\tilde{\Upsilon}\bS\tilde{\Upsilon}\leq\tilde{\Upsilon}\Big\}
\end{equation}
where
\begin{equation}
\bF_\Upsilon:=\begin{pmatrix}
h_G&X(A)\\ X(A)^* & -\overline{h_G}
\end{pmatrix},\qquad h_G:=h+J(G)-K(G).
\end{equation}
The solution $\Upsilon$ can be written in the form
\begin{equation}
\Upsilon=\bS^{-1/2}\bigg(\1_{(-\ii,0)}\Big(\bS^{-1/2}(\bF_\Upsilon-\mu\bN)\bS^{-1/2}\Big)+\delta\bigg)\bS^{-1/2}
\label{eq:discretized_eq}
\end{equation}
where $0\leq\delta\leq 1$ lives only in the kernel of $\bS^{-1/2}(\bF_\Upsilon-\mu\bN)\bS^{-1/2}$.
\end{lemma}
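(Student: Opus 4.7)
The plan is to apply first-order optimality (Karush--Kuhn--Tucker) to the quadratic program~\eqref{eq:value_I_h} and then solve the resulting linear subproblem explicitly by spectral decomposition. The crucial observation is that the discretized energy~\eqref{eq:discretized_energy} is a quadratic polynomial in $(G,A)$, so its Fr\'echet differential at $\Upsilon$ is a linear functional that turns out to be expressible via the mean-field Hamiltonian $\bF_\Upsilon$. Using the symmetry $(ij|k\ell)=(k\ell|ij)$ of the Coulomb tensor~\eqref{eq:def_echange} and the antisymmetry $A^T=-A$, one computes by polarization $d_G\cE\cdot\tilde G = \tr(h_G\tilde G)$ and $d_A\cE\cdot\tilde A = \Re\tr(X(A)^*\tilde A)$. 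A tangent direction to the affine manifold of block matrices~\eqref{eq:def_Gamma_bigS} has the form $\tilde\Upsilon = \bigl(\begin{smallmatrix}\tilde G & \tilde A\\ \tilde A^* & -\overline{\tilde G}\end{smallmatrix}\bigr)$, and a short computation (using $S=\overline S$ and self-adjointness of $h_G$) gives $\tr(\bF_\Upsilon\tilde\Upsilon) = 2\tr(h_G\tilde G)+2\Re\tr(X(A)^*\tilde A) = 2\,d\cE_\Upsilon\cdot\tilde\Upsilon$. Similarly $\tr(\bN\tilde\Upsilon) = 2\tr(S\tilde G)$, so the affine constraint $\tr(\bN\Upsilon) = 2N-N_b$ is nothing but $\tr\gamma = N$ and has nonvanishing gradient $\bN$.

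Next, the feasible set $\{0\leq \tilde\Upsilon\bS\tilde\Upsilon\leq\tilde\Upsilon\}$ is convex (as becomes obvious after the substitution $\tilde\Upsilon':=\bS^{1/2}\tilde\Upsilon\bS^{1/2}$, which turns it into $\{0\leq\tilde\Upsilon'\leq 1\}$), and standard KKT theory therefore yields $\mu\in\R$ (absorbing the factor $1/2$) such that $\tr\bigl((\bF_\Upsilon-\mu\bN)(\tilde\Upsilon-\Upsilon)\bigr)\geq 0$ for every $\tilde\Upsilon$ in the convex set. Because the functional $\tilde\Upsilon\mapsto\tr((\bF_\Upsilon-\mu\bN)\tilde\Upsilon)$ is linear and the set is convex, this inequality is equivalent to $\Upsilon$ being a \emph{global} minimizer of this linear functional, which proves the first assertion. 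To obtain~\eqref{eq:discretized_eq}, perform the change of variables $\tilde\Upsilon':=\bS^{1/2}\tilde\Upsilon\bS^{1/2}$ and set $M:=\bS^{-1/2}(\bF_\Upsilon-\mu\bN)\bS^{-1/2}$, so that the problem becomes $\min\{\tr(M\tilde\Upsilon') : 0\leq\tilde\Upsilon'\leq 1\}$. Diagonalizing the Hermitian matrix $M=\sum_k m_k|e_k\rangle\langle e_k|$, one has $\tr(M\tilde\Upsilon') = \sum_k m_k\langle e_k,\tilde\Upsilon' e_k\rangle$ with $\langle e_k,\tilde\Upsilon' e_k\rangle\in[0,1]$, so each diagonal coefficient is minimized independently by setting it to $1$ if $m_k<0$, to $0$ if $m_k>0$, and to any value in $[0,1]$ if $m_k=0$. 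This forces $\tilde\Upsilon' = \1_{(-\ii,0)}(M)+\delta$ with $0\leq\delta\leq 1$ supported in $\ker M$, which is exactly~\eqref{eq:discretized_eq} after conjugation by $\bS^{-1/2}$.

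The main delicate point is the derivative computation of the first paragraph: one must carefully respect the block structure of $\Upsilon$, whose $(2,2)$-block $S^{-1}-\bar G$ is \emph{not} an independent variable but is determined by the $(1,1)$-block, and keep track of the several factors of $2$ that appear simultaneously in $d\cE_\Upsilon\cdot\tilde\Upsilon$, in $\tr(\bF_\Upsilon\tilde\Upsilon)$ and in $\tr(\bN\tilde\Upsilon)$. Once this bookkeeping is done correctly, the KKT step and the solution of the linear program are entirely standard finite-dimensional convex analysis, with no further functional-analytic subtlety because everything happens in a finite-dimensional space.
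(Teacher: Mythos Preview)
Your argument is correct. The paper does not actually supply its own proof of this lemma; it only writes ``the self-consistent equation is obtained like in~\cite{BacLieSol-94}'' and then records the statement. Your route---computing the differential of~\eqref{eq:discretized_energy} to identify the mean-field matrix $\bF_\Upsilon$, extracting the multiplier $\mu$ by first-order optimality on the convex feasible set, and then solving the resulting linear program by diagonalizing $M=\bS^{-1/2}(\bF_\Upsilon-\mu\bN)\bS^{-1/2}$ after the substitution $\tilde\Upsilon'=\bS^{1/2}\tilde\Upsilon\bS^{1/2}$---is exactly the standard argument one finds in that reference.

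For comparison, when the paper later needs the same kind of conclusion (in the proof of Lemma~\ref{lem:Roothaan}), it obtains the multiplier by a slightly different device: it shows the value function $\lambda\mapsto I_\Upsilon(\lambda)$ is convex and takes $\mu$ in its subdifferential at $\lambda=N/2$, which immediately reduces the constrained problem to an unconstrained one. This is equivalent to your KKT step but sidesteps any mention of constraint qualification; your invocation of KKT implicitly requires $0<N/2<N_b$ so that the trace constraint has feasible perturbations in both directions. One further point worth making explicit in your write-up is that the spectral projector $\1_{(-\ii,0)}(M)$ automatically has the HFB block structure (because $M$ does, via the symmetry $(v,w)\mapsto(\bar w,\bar v)$ exchanging eigenvalues $\epsilon\leftrightarrow-\epsilon$), which is why minimizing over all $\tilde\Upsilon'$ with $0\le\tilde\Upsilon'\le1$ gives the same answer as minimizing over those with the block form~\eqref{eq:def_Gamma_bigS}.
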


The solution $\Upsilon$ of the self-consistent equation~\eqref{eq:discretized_eq} may be equivalently written by considering the generalized eigenvalue problem
\begin{equation}
\big(\bF_\Upsilon-\mu\bN\big)f_i=\epsilon_i\,\bS\,f_i,\qquad \pscal{f_i,\bS f_j}=\delta_{ij}.
\label{eq:generalized_eig_pb}
\end{equation}
Then we have simply (assuming $\epsilon_i\neq0$ for all $i=1,...,2N_b$)
$$\Upsilon=\sum_{\epsilon_i<0}f_i\,f_i^*.$$
Again, this is similar to the Hartree-Fock solution~\cite{CanDefKutLeBMad-03,LeBris-05} except that $\mu$ is unknown and $\bN$ does not always commute with $\bF_\Upsilon$.

Remark that although the basis functions $\chi_i$ are real, the density matrix $\Upsilon$ is not necessarily real. In the next section, we will restrict ourselves to real-valued density matrices and impose some spin symmetry.

\subsection{Using symmetries}
The HFB energy $\cE(\Gamma)$ has some natural symmetry invariances which we describe in detail in this section. Recall that since $\cE$ is a \emph{nonlinear} functional, it cannot be guaranteed that the HFB minimizers will all have the same symmetries as the HFB energy. The set of all minimizers will be invariant under the action of the symmetry group, but each minimizer alone does not have to be invariant.

We have already allowed for the breaking of particle-number symmetry and we hope to find an HFB ground state. It will then automatically break the translational invariance of the system. There are three other symmetries (spin, complex conjugation and rotations) which are of interest to us. We have the choice of imposing these symmetries by adding appropriate constraints, or not. Because this reduces the computational cost, it will be convenient to impose them.

\subsubsection{Time-reversal symmetry}\label{sec:time-reversal}
Let us now assume that $q=2$, which means that our fermions are spin-$1/2$ particles. Since the Laplacian and the interaction function $W$ do not act on the spin variable, the HFB energy has some \emph{spin symmetry}, which can be written for $q=2$ as
$$\forall k=1,2,3,\qquad \cE(\Sigma_k\Gamma\Sigma_k^*)=\cE(\Gamma)$$
where 
$$\Sigma_k:=\begin{pmatrix}
i\sigma_k & 0\\ 0 & \overline{i\sigma_k}\end{pmatrix},$$
with $\sigma_1,\sigma_2,\sigma_3$ being the usual Pauli matrices. Note that $\Sigma_k$ has the form of a Bogoliubov transformation, hence $\Sigma_k\Gamma\Sigma_k^*$ is also an HFB state. The number operator is also invariant which means that 
$$\Sigma_k\cN\Sigma^*_k=\cN$$
for all $k=1,2,3$.
Thus, the contraint $\tr(\gamma)=N$ is conserved and we have $\Sigma_k\Gamma\Sigma_k^*\in\cK(N)$ when $\Gamma\in\cK(N)$.

Another important symmetry is that of \emph{complex conjugation} which means this time that
$$\cE(\overline{\Gamma})=\cE(\Gamma)$$
and which is based on the fact that the Laplacian and $W$ are real operators.
Again we have $\tr(\overline\gamma)=\tr(\gamma)$ hence $\cK(N)$ is invariant under complex conjugation.

As was explained in~\cite{HaiLenLewSch-10} (see Remark 5 page 1032), the density matrices $(\gamma,\alpha)$ can be written in the special form  
\begin{equation}
\gamma'=g\otimes \begin{pmatrix}
1 &0\\ 0&1
\end{pmatrix},\qquad \alpha'=a\otimes\begin{pmatrix}
0 &1\\ -1&0
\end{pmatrix},\qquad \text{with $g=g^T=\overline{g}$ and $a=a^T=\overline{a}$}
\label{eq:form_gamma_alpha2} 
\end{equation}
(the $2\times2$ matrix refers to the spin variables), if and only if
\begin{equation}
\begin{cases}
\Sigma_k\Gamma\Sigma_k^*=\Gamma, & \text{for $k=1,2$, and}\\
\overline{\Gamma}=\Gamma.
\end{cases}
\end{equation}
In other words, $\Gamma$ is invariant under the action of the group generated by $\Sigma_1$, $\Sigma_2$ and $\cC$. This invariance is sometimes called the \emph{time-reversal symmetry}. As remarked in~\cite{HaiLenLewSch-10}, imposing $\Sigma_k\Gamma\Sigma_k^*=\Gamma$ for all $k=1,2,3$ implies $\alpha\equiv0$ which is not interesting for us.

When $W$ can be written in the form~\eqref{eq:form_W_BFJ}, the Theorem~\ref{thm:variational_constrained} of Bach, Fröhlich and Jonsson~\cite{BacFroJon-09}, tells us that there is no breaking of the time-reversal symmetry. That is, we can always minimize over such special states. For other interactions $W$ this is not necessarily true but it is often convenient to impose this symmetry anyhow. 

Because it holds
$$F_{\Sigma_k\Gamma\Sigma_k^*}=\Sigma_kF_\Gamma\Sigma_k^*,\quad F_{\overline{\Gamma}}=\overline{F_\Gamma},$$
it can then be verified that minimizers under the additional symmetry constraint, satisfy the same self-consistent equation as when no constraint is imposed.

When we discretize the problem by imposing time-reversal symmetry, we use two real symmetric matrices $G$ and $A$, related through the constraint that
\begin{equation}
0\leq\Upsilon\bS\Upsilon\leq\Upsilon,\quad\text{with}\quad  
\Upsilon:=\begin{pmatrix}
G&A\\ A&\ S^{-1}-G\end{pmatrix}\quad\text{and}\quad
\bS=\begin{pmatrix}
S&0\\ 0&S\end{pmatrix}.
\label{eq:relaxed-constraint-symmetry}
\end{equation}
The energy becomes
\begin{equation}
\cE(G,A)=2\tr(hG)+2\tr(G\,J(G))-\tr(G\,K(G))+\tr(A\,K(A)).
\label{eq:discretized_energy_nospin}
\end{equation}
and the associated particle number constraint is $\tr(SG)=N/2$. In practice we always assume that $N$ is even for simplicity. The basis $(\chi_i)$ is now composed of (real-valued) functions in $H^{1}(\R^3,\R)$, instead of functions in $H^1(\R^3,\C^q)$ as before, and the formulas for $S$, $h$, $J$, $K$ and $X$ are the same as before.

\subsubsection{Rotational symmetry}\label{sec:spherical-symm}

The group $SO(3)$ of rotations in $\R^3$ also acts on HFB states and it leaves the energy invariant when the interaction $W$ is a radial function.
In this section we always assume that the spin variable has already been removed according to the previous section and we denote by $g$ and $a$ the corresponding (real symmetric) density matrices. If the spin were still present, rotations would act on it as well.

To any rotation $R\in SO(3)$ we can associate a unitary operator on $L^2(\R^3)$, denoted also by $R$, defined by $(R\phi)(x)=\phi\big(R^{-1}x\big)$. Now we say that an HFB state $\Gamma$ with density matrices $(\gamma,\alpha)$ is invariant under rotations when it satisfies
$$\mathscr{R}\,\Gamma\mathscr{R}^*=\Gamma,\qquad\text{where}\quad \mathscr{R}=\begin{pmatrix}
R & 0\\ 0 & R
\end{pmatrix}.
$$
Note that $\mathscr{R}$ is a Bogoliubov rotation since $\overline{R}=R$. The density matrices of an invariant state satisfy 
$$g(Rx,Ry)=g(x,y),\qquad a(Rx,Ry)=a(x,y)$$
for all $x,y\in\R^3$ and any rotation $R\in SO(3)$. 

As the angular momentum $L=x\times (-i\nabla)$ generates the group of rotations, a (smooth enough) HFB state is invariant under rotations if and only if
$$\mathscr{L}\Gamma=\Gamma\mathscr{L},\qquad\text{where}\quad \mathscr{L}=\begin{pmatrix}
L & 0\\ 0 & L
\end{pmatrix}.
$$
The density matrices $g$ and $a$ can then be written in the special form
\begin{equation}
g(x,y)=\frac{1}{4\pi}\sum_{\ell\geq0}g^\ell(|x|,|y|)\, (2\ell+1)\,P_\ell\big(\omega_x\cdot\omega_y\big),\ \  a(x,y)=\frac{1}{4\pi}\sum_{\ell\geq0}a^\ell(|x|,|y|)\, (2\ell+1)\,P_\ell\big(\omega_x\cdot\omega_y\big)
\label{eq:form_angular_momentum}
\end{equation}
where $P_\ell$ is the Legendre polynomial of degree $\ell$, which is such that $P_\ell(1)=1$. The constraint on $g$ and $a$ are transfered in each angular momentum sector (labelled by $\ell\geq0$), leading to 
$$\begin{pmatrix}
0&0\\ 0&0\end{pmatrix}
\leq 
\begin{pmatrix}
g^\ell&a^\ell\\ a^\ell&\ 1-g^\ell\end{pmatrix}
\leq
\begin{pmatrix}
1&0\\ 0&1\end{pmatrix}$$
on $L^2([0,\ii),r^2\,dr)\oplus L^2([0,\ii),r^2\,dr)$.
However, there is no such constraint between different $\ell$'s. The total average particle number is given by
$$\tr(g)=\sum_{\ell\geq0}(2\ell+1)\tr(g^\ell)=N/2.$$
This is the only constraint which mixes the different angular momentum density matrices.

Now we can discretize the radial HFB problem. We choose a finite-dimensional subspace $V_{\text{rad}}$ in $L^2([0,\ii),r^2dr)$ with basis $(\chi_1,...,\chi_{N_b})$, which we use to expand the density matrices $g^\ell$ and $a^\ell$. Then we fix a maximal angular momentum $\lmax$ and we truncate the series in~\eqref{eq:form_angular_momentum}. This is the same as taking as discretization space 
$$V=\Big\{f(|x|)\, Y_\ell^m(\omega_x)\ :\ f\in V_{\rm rad},\ 0\leq\ell\leq\lmax,\ -\ell\leq m\leq\ell\Big\}\subset L^2(\R^3,\R)$$
where $Y_\ell^m$ is the spherical harmonics of total angular momentum $\ell$ and azimuthal angular momentum $m$. We then assume that $g$ and $a$ are radial and live in this space. The matrices $G^\ell$ and $A^\ell$ of $g^\ell$ and $a^\ell$ in the basis $(\chi_i)$ are defined similarly as before by
\begin{equation}
g^\ell(r,r')=\sum_{i,j=1}^{N_b}G^\ell_{ij}\,\chi_i(r)\chi_j(r'),\qquad a^\ell(r,r')=\sum_{i,j=1}^{N_b}A^\ell_{ij}\,\chi_i(r)\chi_j(r').
\label{eq:def_G_A_ell}
\end{equation}
The constraints on the matrices $G^\ell$ and $A^\ell$ are
\begin{equation}
0\leq\Upsilon^\ell\bS\Upsilon^\ell\leq\Upsilon^\ell,\quad\text{with}\quad  
\Upsilon^\ell:=\begin{pmatrix}
G^\ell&A^\ell\\ A^\ell&\ S^{-1}-G^\ell\end{pmatrix}\quad\text{and}\quad
\bS=\begin{pmatrix}
S&0\\ 0&S\end{pmatrix}
\label{eq:discretized_constraint_ell}
\end{equation}
with
$$S_{ij}=\int_0^\ii \chi_i(r)\,\chi_j(r)\,r^2\,dr$$
and
\begin{equation}
\sum_{\ell=0}^\lmax(2\ell+1)\tr(SG^\ell)=N/2.
\label{eq:contrainte_N_ell} 
\end{equation}

The total HFB energy is now
\begin{multline}
\cE(G^0,...,G^\lmax,A^0,...,A^\lmax)=2\sum_{\ell=0}^\lmax(2\ell+1)\tr(h^\ell G^\ell)\\
+\sum_{\ell,\ell'=0}^\lmax (2\ell+1)(2\ell'+1)\Big(2\tr(G^\ell\,J(G^{\ell'}))-\tr(G^\ell\,K^{\ell\ell'}(G^{\ell'}))+\tr(A^\ell\,K^{\ell\ell'}(A^{\ell'}))\Big),
\label{eq:discretized_energy_nospin_radial}
\end{multline}
where
$$h^\ell_{ij}:=\int_0^\ii \chi_i'(r)\,\chi_j'(r)\,r^2\,dr+\ell(\ell+1)\int_0^\ii \chi_i(r)\,\chi_j(r)\,dr,$$
$$J(G^{\ell'})_{ij}:=\sum_{m,n=0}^{N_b}(ij|nm)_{0,0}\, G^{\ell'}_{mn},\qquad K^{\ell\ell'}(G^{\ell'})_{ij}:=\sum_{m,n=0}^{N_b}(im|jn)_{\ell,\ell'}\, G^{\ell'}_{mn},$$
$$(ij|mn)_{\ell,\ell'}:= \int_0^\ii r^2\,dr\int_0^\ii s^2\,ds\;\chi_i(r)\,\chi_j(r)\,\chi_m(s)\,\chi_n(s)\, w_{\ell,\ell'}(r,s)$$
and
$$w_{\ell,\ell'}(r,s):=\frac12\int_{-1}^1W\left(\sqrt{r^2+s^2-2rst}\right)\, P_\ell(t)\,P_{\ell'}(t)\,dt.$$
Any minimizer $(G^0,...,G^\lmax,A^0,...,A^\lmax)$ of $\cE$ under the constraints~\eqref{eq:discretized_constraint_ell} and~\eqref{eq:contrainte_N_ell} will be of the form
\begin{equation}
\Upsilon^\ell=\sum_{\epsilon_i^\ell<0}f_i^\ell\big(f_i^\ell\big)^T,\qquad 0\leq\ell\leq\lmax,
\label{eq:generalized_form_Gamma_ell} 
\end{equation}
where the vectors $f^\ell_i$'s solve the generalized eigenvalue problem
\begin{equation}
\Big(\bF^\ell-\mu(2\ell+1)\bN\Big)f_i^\ell=\epsilon_i^\ell\,\bS\,f_i^\ell,\qquad \pscal{f_i,\bS f_j}=\delta_{ij}
\label{eq:generalized_eig_pb_ell}
\end{equation}
with 
$$\bF^\ell:=\begin{pmatrix}
h^\ell&0\\ 0 & -h^\ell
\end{pmatrix}+\sum_{\ell'=0}^\lmax\begin{pmatrix}
2J\big(G^{\ell'}\big)-K^{\ell\ell'}\big(G^{\ell'}\big) & K^{\ell\ell'}\big(A^{\ell'}\big)\\
K^{\ell\ell'}\big(A^{\ell'}\big) & -2J\big(G^{\ell'}\big)+K^{\ell\ell'}\big(G^{\ell'}\big)
\end{pmatrix}.
$$
The Euler-Lagrange multipler $\mu$ appearing in~\eqref{eq:generalized_eig_pb_ell} is common to all the different angular momentum sectors and it is chosen to ensure the validity of the constraint~\eqref{eq:contrainte_N_ell}.

\section{Algorithmic strategies and convergence analysis}\label{sec:algos}

In this section we study the convergence of two algorithms which can be used in practice to solve the HFB minimization problem~\eqref{eq:def_J_N}. In order to simplify our presentation, we restrict ourselves to the finite-dimensional case, that is, to the discretized problem~\eqref{eq:value_I_h}. We also assume that the discretization basis $(\chi_j)$ is orthonormal, such that $\bS=I_{2N_b}$, the $(2N_b)\times (2N_b)$ identity matrix.  
Finally, we only consider states which are invariant under time-reversal symmetry like in Section~\ref{sec:time-reversal}. This means that the HFB state is described by real and symmetric matrices $G$ and $A$ such that
\begin{equation}
\begin{pmatrix}
0&0\\ 0&0\end{pmatrix}
\leq
\Upsilon:=\begin{pmatrix}
G&A\\ A&\ 1-G\end{pmatrix}
\leq
\begin{pmatrix}
1&0\\ 0&1\end{pmatrix}
\label{eq:constraint-discretized} 
\end{equation}
The energy is given by~\eqref{eq:discretized_energy_nospin},
\begin{equation}
\cE(\Upsilon)=2\tr(hG)+2\tr(G\,J(G))-\tr(G\,K(G))+\tr(A\,K(A)).
\label{eq:discretized_energy_nospin_bis}
\end{equation}
The extension to more general situations is straightforward.

The energy $\cE$ is continuous (it is indeed real-analytic) with respect to $\Upsilon$. Also the set
$\cK$ of density matrices $\Upsilon$ of the form \eqref{eq:constraint-discretized} is compact in finite dimension. Hence minimizers always exist and, as we have seen, they solve the nonlinear equation
\begin{equation}
\Upsilon=\1_{(-\ii,0)}\Big(\bF_\Upsilon-\mu\bN\Big)+\delta,
\label{eq:discretized_eq_bis}
\end{equation}
where $\mu$ is a Lagrange multiplier chosen to ensure the constraint that $\tr(G)=N/2$.  Of course we must have $N/2\leq N_b$, the dimension of the (no-spin) discretization space $V_h$, otherwise the minimization set is always empty.

\subsection{Roothaan Algorithm}
The most natural technique used in practice to solve the equation~\eqref{eq:discretized_eq_bis} is a simple fixed point method~\cite{QueFlo-78,DecGog-80}. This is usually refered to as the \emph{Roothaan algorithm} in the chemistry literature~\cite{Roothaan-51}. The iteration scheme is the following
\begin{equation}
\Upsilon_{n+1}=\1_{(-\ii,0)}\Big(\bF_{\Upsilon_n}-\mu_{n+1}\bN\Big)+\delta_{n+1}.
\label{eq:def_Roothaan}
\end{equation}
At each step, one has to determine $\mu_{n+1}$ such as to satisfy the constraint $\tr(G_{n+1})=N/2$. If the operator $\bF_{\Upsilon_n}-\mu_{n+1}\bN$ has a trivial kernel, then $\delta_{n+1}\equiv0$. This is the usual situation encountered in practice. In the iteration~\eqref{eq:def_Roothaan}, the state is assumed to be pure at each step ($\Upsilon$ is an orthogonal projection). Recall that by Theorem~\ref{thm:variational_constrained} we know that minimizers of $\cE$ under a particle number constraint are always pure, under suitable assumptions on the interaction potential $W$. The algorithm is stopped when the commutator
$$\norm{\big[\Upsilon_{n},F_{\Upsilon_n}\big]}$$
and/or when the variation of the HFB state
$$\norm{\Upsilon_{n+1}-\Upsilon_{n}}$$
are smaller than a prescribed $\varepsilon$.

Our purpose in this section is to study the behavior of the Roothaan algorithm~\eqref{eq:def_Roothaan}. In the Hartree-Fock case, it was shown in a fundamental work of Cancès and Le Bris~\cite{CanBri-00,CanBri-00a}, that the algorithm converges or oscillate between two points, none of them being a solution to the equation~\eqref{eq:discretized_eq_bis}. This result was recently improved by Levitt in~\cite{Levitt-12}. We will explain that the results of Cancès-Le Bris and Levitt can be generalized to the HFB model. Actually, in a discretization space of dimension $N_b$, HFB is equivalent to a Hartree-Fock-like minimization problem in dimension $2N_b$, with additional constraints. The adaptation of the previously cited works in the HF case reduces to handling these contraints.

In order to avoid the convergence problems of the Roothaan algorithm, Cancès and Le Bris have proposed the \emph{Optimal Damping Algorithm} (ODA). We will study the equivalent of this algorithm in HFB theory in Section~\ref{sec:ODA}.

\medskip

To start with, we show that the Roothaan algorithm is well defined, in the sense that for any HFB state $\Upsilon_n$, there exists $(\Upsilon_{n+1},\mu_{n+1},\delta_{n+1})$ solving~\eqref{eq:def_Roothaan}. To this end, we follow~\cite{CanBri-00,CanBri-00a} and introduce the auxillary functional
\begin{equation}
\tilde\cE(\Upsilon,\Upsilon'):=\tr(hG)+\tr(hG')+2\tr(G\,J(G'))-\tr(G\,K(G'))+\tr(A\,K(A'))
\end{equation}
as well as the variational problem
\begin{equation}
I_{\Upsilon}(\lambda):=\min_{\Upsilon'}\left\{\tilde\cE(\Upsilon,\Upsilon')\ :\ \tr(G')=\lambda\right\}
\label{eq:def_min_one-variable}
\end{equation}
which consists in minimizing over $\Upsilon'$ with $\Upsilon$ fixed. The matrix $\Upsilon'$ must be an admissible HFB state which, in our context, means that
\begin{equation}
\begin{pmatrix}
0&0\\ 0&0\end{pmatrix}
\leq
\Upsilon':=\begin{pmatrix}
G'&A'\\ A'&\ 1-G'\end{pmatrix}
\leq
\begin{pmatrix}
1&0\\ 0&1\end{pmatrix},\qquad (G')^T=\overline{G'}=G',\quad (A')^T=\overline{A'}=A'.
\end{equation}
Recall that we have chosen an orthonormal basis and that the spin has been eliminated. It is clear that~\eqref{eq:def_min_one-variable} admits at least one solution $\Upsilon'$, as soon as $0\leq\lambda\leq N_b$, where we recall that $N_b$ is the dimension of the discretization space $V_h$. The following says that these solutions are exactly those solving the equation of the Roothaan method.

\begin{lemma}[The Roothaan algorithm is well defined]\label{lem:Roothaan}
The function $\lambda\in[0,N_b]\mapsto I_\Upsilon(\lambda)$ is convex, hence left and right differentiable. For any $\lambda\in[0,N_b]$, the minimizers $\Upsilon'$ of $I_\Upsilon(\lambda)$ are exactly the states of the form
\begin{equation}
\Upsilon'=\1_{(-\ii,0)}\big(\bF_{\Upsilon}-\mu'\bN\big)+\delta' 
\label{eq:equation_Upsilon}
\end{equation}
where  $\mu'\in[\partial_-I_\Upsilon(\lambda),\partial_+I_\Upsilon(\lambda)]$ and $0\leq\delta'\leq \1_{\{0\}}(\bF_{\Upsilon}-\mu'\bN)$. If $0\notin\sigma( \bF_\Upsilon-\mu'\bN)$, then $\delta'\equiv0$ and $\Upsilon'$ is unique, for any such $\mu'\in[\partial_-I_\Upsilon(\lambda),\partial_+I_\Upsilon(\lambda)]$.
\end{lemma}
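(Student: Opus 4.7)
The entire argument exploits that $\tilde\cE(\Upsilon,\cdot)$ is an affine function of its second argument when $\Upsilon$ is fixed, so that $I_\Upsilon(\lambda)$ is the value of a finite-dimensional linear program: minimize a linear functional over the convex compact set of admissible HFB matrices $\Upsilon'$ subject to the scalar affine constraint $\tr(G')=\lambda$. Convexity of $\lambda\mapsto I_\Upsilon(\lambda)$ on $[0,N_b]$ follows at once by the standard trick: if $\Upsilon'_1,\Upsilon'_2$ realize the infima at $\lambda_1,\lambda_2$, then the convex combination $t\Upsilon'_1+(1-t)\Upsilon'_2$ is admissible with trace $t\lambda_1+(1-t)\lambda_2$, and affinity of $\tilde\cE(\Upsilon,\cdot)$ yields the required convexity inequality. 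Left and right derivatives then exist everywhere in $[0,N_b]$ by standard convex analysis.

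Next, using the symmetries $(ij|k\ell)=(k\ell|ij)=(ij|\ell k)$ of the two-body integrals together with the real-symmetric structure of $G$ and $A$, I would verify the identity
$$\tilde\cE(\Upsilon,\Upsilon')=\tfrac12\tr(\bF_\Upsilon\,\Upsilon')+c(\Upsilon),$$
for some constant $c(\Upsilon)$, so that the objective, modulo a constant, is $\tfrac12\tr(\bF_\Upsilon\Upsilon')$. The particle-number constraint is equivalently written as $\tr(\bN\Upsilon')=2\lambda-N_b$. Standard convex duality for a scalar affine constraint on a compact convex set then gives the characterization: $\Upsilon'$ minimizes $I_\Upsilon(\lambda)$ if and only if there exists $\mu'\in[\partial_-I_\Upsilon(\lambda),\partial_+I_\Upsilon(\lambda)]$ such that $\Upsilon'$ minimizes the unconstrained Lagrangian $\tfrac12\tr\big((\bF_\Upsilon-\mu'\bN)\Upsilon'\big)$ over admissible HFB states.

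The crux is to compute this unconstrained minimum in a form respecting the HFB block constraint \eqref{eq:constraint-discretized}. If one temporarily drops this structure and minimizes $\tr(M\Upsilon')$ over $0\leq\Upsilon'\leq I$ only, with $M:=\bF_\Upsilon-\mu'\bN$, the answer is of course $\Upsilon'=\1_{(-\ii,0)}(M)+\delta'$ with $0\leq\delta'\leq\1_{\{0\}}(M)$. The main obstacle, and the point where HFB departs from the plain Hartree--Fock setting, is to verify that such a spectral projector is automatically a legitimate HFB matrix. This will follow from the \emph{particle--hole symmetry}
$$\mathcal{J}\,\bF_\Upsilon\,\mathcal{J}=-\bF_\Upsilon,\qquad \mathcal{J}\,\bN\,\mathcal{J}=-\bN,\qquad\mathcal{J}:=\begin{pmatrix}0&I\\I&0\end{pmatrix},$$
which forces $\mathcal{J}M\mathcal{J}=-M$, so that $\mathcal{J}\,\1_{(-\ii,0)}(M)\,\mathcal{J}=I-\1_{(-\ii,0]}(M)$. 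When $0\notin\sigma(M)$, this is precisely the block identity $\mathcal{J}\Upsilon'\mathcal{J}=I-\Upsilon'$ characterizing an HFB state, and uniqueness of $\Upsilon'$ with $\delta'\equiv0$ is immediate. When $0\in\sigma(M)$, the kernel of $M$ is stable under $\mathcal{J}$, so one can select $\delta'$ inside the kernel compatible with the block structure, yielding the general form stated in the lemma. Reality and symmetry of the blocks are automatic since $M$ is real symmetric by construction.
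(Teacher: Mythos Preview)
Your overall strategy matches the paper's: prove convexity of $\lambda\mapsto I_\Upsilon(\lambda)$ by convex combination of minimizers, pass to the Lagrangian via the subdifferential, rewrite the unconstrained problem as $\min_{\Upsilon'}\tr(\bF_\Upsilon-\mu'\bN)\Upsilon'$, and read off the spectral-projector form. The paper actually stops there, declaring the last step ``well known'' without checking that the spectral projector has the required HFB block shape. You, commendably, try to fill this in via particle--hole symmetry. But the specific symmetry you invoke is wrong in the time-reversal symmetric setting of Section~4.

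With $\mathcal{J}=\begin{pmatrix}0&I\\I&0\end{pmatrix}$ and $\bF_\Upsilon=\begin{pmatrix}h_G&p\\p&-h_G\end{pmatrix}$ (here $p=K(A)$ is real symmetric), a direct computation gives
\[
\mathcal{J}\,\bF_\Upsilon\,\mathcal{J}=\begin{pmatrix}-h_G&p\\p&h_G\end{pmatrix}\neq -\bF_\Upsilon,
\]
unless $p=0$. Likewise, $\mathcal{J}\Upsilon'\mathcal{J}=I-\Upsilon'$ forces the off-diagonal block to be \emph{antisymmetric}, whereas in~\eqref{eq:constraint-discretized} the block $A$ is real \emph{symmetric}. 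So both claimed identities fail. The correct conjugation in this real, spin-removed setting is by the symplectic matrix
\[
\mathcal{K}=\begin{pmatrix}0&I\\-I&0\end{pmatrix},\qquad \mathcal{K}^{-1}=\mathcal{K}^T=-\mathcal{K},
\]
for which one checks $\mathcal{K}\bF_\Upsilon\mathcal{K}^{-1}=-\bF_\Upsilon$, $\mathcal{K}\bN\mathcal{K}^{-1}=-\bN$, and $\mathcal{K}\Upsilon'\mathcal{K}^{-1}=I-\Upsilon'$ is exactly the HFB constraint~\eqref{eq:constraint-discretized}. With $\mathcal{K}$ in place of $\mathcal{J}$, your argument that $\1_{(-\infty,0)}(\bF_\Upsilon-\mu'\bN)$ is automatically an admissible HFB state goes through verbatim, and the rest of your proof is correct.
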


\begin{proof}
To see that $I_\Upsilon(\lambda)$ is convex, let $0\leq\lambda_1<\lambda_2\leq N_b$ and let $\Upsilon'_i$ be a minimizer for $I_\Upsilon(\lambda_i)$ with $i=1,2$. Then $t\Upsilon_1'+(1-t)\Upsilon_2'$ is a test state for the problem $I_\Upsilon(t\lambda_1+(1-t)\lambda_2)$. Therefore it holds
\begin{align*}
 I_\Upsilon(t\lambda_1+(1-t)\lambda_2)\leq \tilde\cE(\Upsilon,t\Upsilon_1'+(1-t)\Upsilon_2')&=t\tilde\cE(\Upsilon,\Upsilon_1')+(1-t)\tilde\cE(\Upsilon,\Upsilon_2')\\
&=tI_\Upsilon(\lambda_1)+(1-t)I_\Upsilon(\lambda_2). 
\end{align*}
Then, by convexity we get that $I_\Upsilon(\lambda')\geq I_\Upsilon(\lambda)+\mu(\lambda'-\lambda)$ for any $\lambda'\in[0,N_b]$ and any $\mu\in[\partial_-I_\Upsilon(\lambda),\partial_+I_\Upsilon(\lambda)]$. Thus
\begin{align*}
I_\Upsilon(\lambda)-\mu\lambda&=\min\{I_\Upsilon(\lambda')-\mu\lambda'\ :\ 0\leq\lambda'\leq N_b\}\\
& =\min_{\Upsilon'}\big\{\tilde\cE(\Upsilon,\Upsilon')-\mu\tr(G')\big\}\\
&=\tr(hG)+\frac12\min_{\Upsilon'}\tr(\bF_\Upsilon-\mu\bN)\Upsilon'.
\end{align*}
In the previous two mins, $\Upsilon'$ is varied over all possible HFB states, without any particle number constraint. It is well known that the minimizers of the problem on the right side are exactly the solutions of the equation~\eqref{eq:equation_Upsilon}.
\end{proof}

Lemma~\ref{lem:Roothaan} tells us that for any given $\Upsilon_n$, there always exists at least one solution $(\Upsilon_{n+1},\mu_{n+1},\delta_{n+1})$ of the equation~\eqref{eq:def_Roothaan}. It is obtained by solving the minimization problem $I_{\Upsilon_n}(N/2)$, and one has to take
$\mu_{n+1}\in[\partial_-I_{\Upsilon_n}(N/2),\partial_+I_{\Upsilon_n}(N/2)]$. 
We can always take by convention
$$\mu_{n+1}:=\frac{\partial_-I_{\Upsilon_n}(N/2)+\partial_+I_{\Upsilon_n}(N/2)}{2}.$$
However, $\Upsilon_{n+1}$ is not uniquely defined yet because of the possibility of having $\delta_{n+1}\neq0$. As we have seen it is unique when 
\begin{equation}
0\notin \sigma\big(\bF_{\Upsilon_n}-\mu_{n+1}\bN\big).
\label{eq:zero_not_in_spec} 
\end{equation}
In this section we always assume that it is possible to find $(\Upsilon_{n+1},\mu_{n+1},\delta_{n+1})$ exactly. In practice, we will only know $(\Upsilon_{n+1},\mu_{n+1},\delta_{n+1})$ approximately. Later in Section~\ref{sec:constraint} we explain how to do this numerically. We will also see that the condition~\eqref{eq:zero_not_in_spec} is ``very often'' satisfied. This vague statement is made precise in Lemma~\ref{lem:generic_nu} below.
Following Cancès and Le Bris, we now introduce the concept of uniform well-posedness.

\begin{definition}[Uniform well-posedness]
We say that for a given initial HFB state $\Upsilon_0$, the sequence $(\Upsilon_n)$ generated by the Roothaan algorithm is \emph{uniformly well posed} when there exists $\eta>0$ such that 
\begin{equation}
\left|\bF_{\Upsilon_n}-\mu_{n+1}\bN\right|\geq \eta 
\label{eq:uniform-well-posed}
\end{equation}
for all $n\geq0$, where $\mu_{n+1}=\big(\partial_-I_{\Upsilon_n}(N/2)+\partial_+I_{\Upsilon_n}(N/2)\big)/{2}$.
\end{definition}

Note that the condition $|\bF_{\Upsilon_n}-\mu_{n+1}\bN|\geq \eta$ is equivalent to $(-\eta,\eta)\cap\sigma(\bF_{\Upsilon_n}-\mu_{n+1}\bN)=\emptyset$.
Later in Section~\ref{sec:constraint} we will make several comments concerning Assumption~\eqref{eq:uniform-well-posed}.

We have seen that the sequence generated by the Roothaan algorithm can be obtained by solving the minimization problem
$$I_{\Upsilon_n}=\min_{\Upsilon'}\tilde\cE(\Upsilon_n,\Upsilon').$$
Since $\tilde\cE(\Upsilon,\Upsilon')=\tilde\cE(\Upsilon',\Upsilon)$, we conclude that the Roothaan algorithm is the same as minimizing $\tilde\cE$ with respect to its first and second variables one after another, inductively. This fact allows to prove the following result, which is the HFB equivalent of Theorem 7 in~\cite{CanBri-00a} and Theorem 5.1 in~\cite{Levitt-12} in the HF case.

\begin{theorem}[Convergence of the Roothaan algorithm]\label{thm:Roothaan}
Assume that $0<N/2<N_b$.
Let $\Upsilon_0$ be an initial HFB state such that the sequence $(\Upsilon_n)$ generated by the Roothaan algorithm is \emph{uniformly well posed}. Then 
\begin{itemize}
\item The sequence $\tilde\cE(\Upsilon_{2n},\Upsilon_{2n+1})$ decreases towards a critical value of $\tilde\cE$;

\medskip

\item The sequence $(\Upsilon_{2n},\Upsilon_{2n+1})$ converges towards a critical point $(\Upsilon,\Upsilon')$ of $\tilde\cE$;

\medskip

\item If $\Upsilon=\Upsilon'$, then this state is a solution of the original HFB equation~\eqref{eq:discretized_eq_bis}, but if $\Upsilon\neq\Upsilon'$, then none of these two states is a solution to~\eqref{eq:discretized_eq_bis}.
\end{itemize}
\end{theorem}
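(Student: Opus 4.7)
The plan is to follow the two-step minimization strategy of Canc\`es--Le Bris~\cite{CanBri-00,CanBri-00a}, refined by Levitt~\cite{Levitt-12} for full convergence, adapting it to the enlarged HFB setting where $\bN$ replaces the identity in the particle-number constraint. The starting observation is that $\tilde\cE$ is symmetric in its two arguments with $\tilde\cE(\Upsilon,\Upsilon)=\cE(\Upsilon)$, and by Lemma~\ref{lem:Roothaan} the Roothaan step $\Upsilon_n\mapsto\Upsilon_{n+1}$ is precisely the minimization of $\tilde\Upsilon\mapsto\tilde\cE(\Upsilon_n,\tilde\Upsilon)$ on $\{\tr(\tilde G)=N/2\}$, the minimizer being unique thanks to uniform well-posedness (the gap $\eta>0$ forces $\delta_{n+1}=0$). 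Combining minimality with symmetry gives
$$\tilde\cE(\Upsilon_n,\Upsilon_{n+1})\leq\tilde\cE(\Upsilon_n,\Upsilon_{n-1})=\tilde\cE(\Upsilon_{n-1},\Upsilon_n),$$
so $\ell_n:=\tilde\cE(\Upsilon_n,\Upsilon_{n+1})$ is nonincreasing; since $\cE$ is bounded below on the compact set $\cK$ and $\tilde\cE$ is continuous on $\cK\times\cK$, $\ell_n\to\ell_\ii$.

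By compactness, I would extract a subsequence along which $\Upsilon_{2n_k}\to\Upsilon$, $\Upsilon_{2n_k+1}\to\Upsilon'$, and (possibly after a further extraction) $\Upsilon_{2n_k+2}\to\Upsilon''$. Passing to the limit in the two minimization inequalities yields that $\Upsilon'$ minimizes $\tilde\cE(\Upsilon,\cdot)$ and $\Upsilon''$ minimizes $\tilde\cE(\Upsilon',\cdot)$ under the trace constraint. By symmetry, $\tilde\cE(\Upsilon',\Upsilon)=\tilde\cE(\Upsilon,\Upsilon')=\ell_\ii=\tilde\cE(\Upsilon',\Upsilon'')$, so $\Upsilon$ itself also achieves the minimum of $\tilde\cE(\Upsilon',\cdot)$; uniform well-posedness passes to the limit because the spectrum of $\bF_\Upsilon-\mu\bN$ depends continuously on $\Upsilon$ in finite dimension, forcing uniqueness of this minimizer and therefore $\Upsilon=\Upsilon''$. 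Hence every cluster point $(\Upsilon,\Upsilon')$ is a critical point of $\tilde\cE$ on the product constraint set, equivalently a $2$-periodic orbit of the Roothaan map. Upgrading subsequential to full convergence of $(\Upsilon_{2n},\Upsilon_{2n+1})$ is the contribution of Levitt~\cite{Levitt-12}: in finite dimension $\tilde\cE$ is real-analytic and the constraint set is semi-algebraic, so a Lojasiewicz gradient inequality, combined with the gap-induced local smoothness of the one-step minimizer map near cluster points, transfers essentially verbatim from HF to HFB.

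The dichotomy is then almost immediate. If $\Upsilon=\Upsilon'$, Lemma~\ref{lem:Roothaan} applied to the first variable recovers exactly the HFB equation~\eqref{eq:discretized_eq_bis}. Conversely, if $\Upsilon$ were a solution of~\eqref{eq:discretized_eq_bis}, the same lemma together with the limit gap condition would identify $\Upsilon$ as the unique minimizer of $\tilde\cE(\Upsilon,\cdot)$, which we have already shown to be $\Upsilon'$, so $\Upsilon=\Upsilon'$. The main technical obstacle I anticipate is the uniform control of the Lagrange multipliers $\mu_{n+1}\in[\partial_-I_{\Upsilon_n}(N/2),\partial_+I_{\Upsilon_n}(N/2)]$ along the iteration: unlike in HF, where the number operator coincides with the identity in an orthonormal basis, here $\bN=\mathrm{diag}(I,-I)$ does not commute with $\bF_\Upsilon$ as soon as $A\neq 0$, so $\mu_{n+1}$ must be refitted at each step and cannot be read off from a simple spectral gap of $\bF_\Upsilon$ alone. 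Uniform well-posedness is exactly the hypothesis that makes this control available, and propagating it to the limit so that uniqueness of the one-step minimizer survives is the delicate point that requires care.
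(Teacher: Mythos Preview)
Your overall strategy matches the paper's: alternate minimization of $\tilde\cE$, monotonicity of $\tilde\cE(\Upsilon_n,\Upsilon_{n+1})$, then Levitt's {\L}ojasiewicz argument for true (not merely subsequential) convergence. Two ingredients, however, are either missing or misattributed.

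First, your claim that ``uniform well-posedness is exactly the hypothesis that makes this control available'' for the multipliers $\mu_{n+1}$ is incorrect. The gap condition $|\bF_{\Upsilon_n}-\mu_{n+1}\bN|\geq\eta$ says nothing about the size of $\mu_{n+1}$: if $|\mu_{n+1}|\to\infty$ then $\bF_{\Upsilon_n}-\mu_{n+1}\bN$ looks like $-\mu_{n+1}\bN$, whose spectrum is $\{\pm\mu_{n+1}\}$, and the gap only gets larger. The paper instead proves a separate lemma: by first-order perturbation theory around $-\mu\bN$, one has $\tr G_\mu\leq C/|\mu|$ for $\mu\ll0$ and $\tr G_\mu\geq N_b-C/\mu$ for $\mu\gg0$, uniformly in the state used to build $\bF$. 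Since $\tr G_{n+1}=N/2$ with $0<N/2<N_b$ is fixed, this pins $\mu_{n+1}$ in a compact interval. Without this, you cannot pass the gap to the limit (you need $\mu_{n_k}$ to converge), nor can you obtain uniform {\L}ojasiewicz constants, since $\kappa$ and $\theta$ depend on the reference point and on $\mu,\mu'$, and a compactness argument over the level set requires the multipliers to stay bounded.

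Second, you never state the quantitative energy-drop estimate
\[
\tilde\cE(\Upsilon_n,\Upsilon_{n-1})-\tilde\cE(\Upsilon_n,\Upsilon_{n+1})
=\tfrac12\tr\big(\bF_{\Upsilon_n}-\mu_{n+1}\bN\big)(\Upsilon_{n-1}-\Upsilon_{n+1})
\geq \tfrac{\eta}{2}\,\norm{\Upsilon_{n-1}-\Upsilon_{n+1}}^2,
\]
which comes from writing $\bF_{\Upsilon_n}-\mu_{n+1}\bN=|\bF_{\Upsilon_n}-\mu_{n+1}\bN|(\Upsilon_{n+1}^\perp-\Upsilon_{n+1})$ and the projector inequality $(P^\perp-P)(\gamma-P)(P^\perp-P)\geq(\gamma-P)^2$. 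This is not optional: the {\L}ojasiewicz step combines it with the gradient bound $(\tilde\cE(\Upsilon_n,\Upsilon_{n-1})-\ell)^{1-\theta}\leq C\norm{\Upsilon_{n-1}-\Upsilon_{n+1}}$ (obtained from the commutator form of the constrained gradient, after parametrizing pure HFB states by $H\mapsto e^H\Upsilon e^{-H}$) and the concavity of $x\mapsto x^\theta$ to get a telescoping bound on $\sum_n\norm{\Upsilon_{n+1}-\Upsilon_{n-1}}$. Saying the argument ``transfers essentially verbatim'' hides precisely the two places where the HFB constraint operator $\bN$ (which does not commute with $\bF_\Upsilon$) forces extra work compared to HF.
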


Theorem~\ref{thm:Roothaan} says that (provided it is uniformly well posed) the sequence $\Upsilon_n$ will either converge to a solution of the self-consistent Equation~\eqref{eq:discretized_eq_bis}, or oscillate between two points $\Upsilon$ and $\Upsilon'$, none of them being a solution to the desired equation.

\begin{proof}
We split the proof into several steps. 

\bigskip

\noindent\textbf{Step 1: \textit{$\mu_n$ is uniformly bounded.}} It will be very useful to know that the sequence $\mu_n$ is uniformly bounded. The following says that, as soon as we fix $\tr(G)=N/2$ with $0<N/2<N_b$, the chemical potential $\mu$ cannot be too negative and too positive.

\begin{lemma}[Bounds on the multiplier $\mu$]\label{lem:bound_mu}
Let $\Upsilon'$ be any fixed HFB state and  
$$\Upsilon_\mu:=\1_{(-\ii,0)}\big(\bF_{\Upsilon'}-\mu\bN\big)=\begin{pmatrix}
G_\mu&A_\mu\\ A_\mu & \ 1-G_\mu
\end{pmatrix}
$$
the corresponding HFB ground state at chemical potential $\mu$. 
There exists a constant $C$ which is independent of $\Upsilon'$ and $\mu$, such that 
\begin{equation}
\forall \mu\leq -C,\quad \tr G_\mu\leq \frac{C}{|\mu|}\qquad\text{and}\qquad 
\forall \mu\geq C,\quad \tr G_\mu \geq N_b-\frac{C}{\mu}.
\end{equation}
\end{lemma}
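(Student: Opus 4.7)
The plan is to control the spectral projector $\Upsilon_\mu = \1_{(-\ii,0)}(\bF_{\Upsilon'}-\mu\bN)$ by a perturbative spectral analysis as $|\mu|\to\ii$, viewing $-\mu\bN$ as the dominant part and $\bF_{\Upsilon'}$ as a uniformly bounded perturbation. The starting point is the observation that since we work in finite dimension with an orthonormal basis and admissible HFB states satisfy $0\leq G'\leq I$ and $A'(A')^*\leq G'-(G')^2\leq I/4$, the linear maps $h_{G'}$, $J(G')$, $K(G')$, $X(A')$ are all bounded in terms of the finitely many coefficients $h_{ij}$ and $(ij|k\ell)$. Hence there exists a constant $C_0$ depending only on the basis and on $W$ such that
$$\norm{\bF_{\Upsilon'}}\leq C_0\qquad\text{for every admissible }\Upsilon'.$$

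Using Weyl's inequality applied to $\bF_{\Upsilon'}-\mu\bN=-\mu\bN+\bF_{\Upsilon'}$, one deduces that whenever $|\mu|\geq 2C_0$, the operator $\bF_{\Upsilon'}-\mu\bN$ has exactly $N_b$ negative and $N_b$ positive eigenvalues, none within distance $1$ of $0$. Let $\epsilon_1,\ldots,\epsilon_{N_b}<0$ be the negative eigenvalues and $f_i=(u_i,v_i)\in\C^{N_b}\oplus\C^{N_b}$ the associated orthonormal eigenvectors, so that $\norm{u_i}^2+\norm{v_i}^2=1$. Then $\Upsilon_\mu=\sum_i f_if_i^*$, and reading off the upper-left block one gets
$$G_\mu=\sum_{i=1}^{N_b}u_iu_i^*,\qquad \tr G_\mu=\sum_{i=1}^{N_b}\norm{u_i}^2.$$

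The proof is then immediate from the two block equations. For $\mu\leq -C$ (with $C$ a suitable multiple of $C_0$), I would use the upper block $(h_{G'}-\mu-\epsilon_i)u_i=-X(A')v_i$: since $h_{G'}\geq -C_0$, $-\mu\geq |\mu|$ and $-\epsilon_i>0$, the operator $h_{G'}-\mu-\epsilon_i$ is positive-definite with smallest eigenvalue at least $|\mu|-C_0$. Inverting and using $\norm{X(A')}\leq C_0$ and $\norm{v_i}\leq 1$ gives $\norm{u_i}\leq C_0/(|\mu|-C_0)$, whence $\tr G_\mu\leq N_b C_0^2/(|\mu|-C_0)^2\leq C/|\mu|$ after enlarging $C$. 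Symmetrically, for $\mu\geq C$ the lower block $(-\overline{h_{G'}}+\mu-\epsilon_i)v_i=-X(A')^*u_i$ yields $\norm{v_i}\leq C_0/(\mu-C_0)$, so
$$\tr G_\mu=N_b-\sum_i\norm{v_i}^2\geq N_b-N_bC_0^2/(\mu-C_0)^2\geq N_b-C/\mu.$$

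I do not expect any serious obstacle: the whole argument is a finite-dimensional perturbative calculation. The only point requiring a little care is making sure that the constant $C$ depends on neither $\Upsilon'$ nor $\mu$, which reduces to the uniform bound $\norm{\bF_{\Upsilon'}}\leq C_0$ established at the outset, together with the verification (via Weyl) that $\rank\Upsilon_\mu=N_b$ for $|\mu|$ large so that the eigenvector decomposition is legitimate.
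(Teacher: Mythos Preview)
Your argument is correct. Both your proof and the paper's start from the same key observation, namely the uniform bound $\norm{\bF_{\Upsilon'}}\leq C_0$ over all admissible HFB states, which you derive explicitly from the bounds on $G'$ and $A'$ and the paper obtains by a compactness argument. From there, however, the routes diverge. The paper rescales by $|\mu|$ and invokes regular perturbation theory for spectral projectors to compare $\1_{(-\ii,0)}(\bF_{\Upsilon'}-\mu\bN)$ directly with the explicit projector $\1_{(-\ii,0)}(-\mu\bN)$, then reads off the result by taking the trace against $\bN$. You instead decompose the eigenvectors $f_i=(u_i,v_i)$ and exploit the $2\times2$ block structure of the eigenvalue equation to bound $\norm{u_i}$ (respectively $\norm{v_i}$) individually. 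Your approach is more hands-on and has the incidental benefit of yielding the sharper decay $C/\mu^2$ rather than $C/|\mu|$, though this is not needed. The paper's approach is shorter and more abstract, relying on a standard perturbation lemma rather than an explicit inversion. Both are perfectly valid; your Weyl-inequality step to ensure $\rank\Upsilon_\mu=N_b$ is the right way to justify the eigenvector decomposition, and the uniformity in $\Upsilon'$ is indeed inherited from the uniform bound on $\bF_{\Upsilon'}$.
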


The lemma says that the average number of particles in $\1_{(-\ii,0)}\big(\bF_\Upsilon-\mu\bN\big)$ tends to $N_b$ when $\mu\to\ii$ whereas it tends to 0 when $\mu\to-\ii$, this \emph{uniformly with respect to the state $\Upsilon'$} used to build the mean-field operator $\bF_{\Upsilon'}$.

\begin{proof}
We first remark that there exists a constant $C$ such that 
\begin{equation}
\norm{F_{\Upsilon'}}\leq C
\label{eq:bound_F_unif}
\end{equation}
for any HFB state $\Upsilon'$. This follows from the fact that $F_{\Upsilon'}$ is continuous with respect to $\Upsilon'$ and that the latter lives in a compact set since we always have $0\leq\Upsilon\leq 1$. The chosen norm for $\norm{F_{\Upsilon'}}$ does not matter since we are in finite dimension.
Now, for $\mu$ large enough we can use regular perturbation theory
and obtain that
\begin{multline*}
\norm{\1_{(-\ii,0)}\big(F_{\Upsilon'}-\mu\bN\big)-\1_{(-\ii,0)}\big(-\mu\bN\big)}\\
= \norm{\1_{(-\ii,0)}\left(\frac{F_{\Upsilon'}}{|\mu|}-\frac{\mu}{|\mu|}\,\bN\right)-\1_{(-\ii,0)}\left(-\frac{\mu}{|\mu|}\,\bN\right)}\leq \frac{C}{|\mu|}.
\end{multline*}
Note that 
$$\1_{(-\ii,0)}\left(-\frac{\mu}{|\mu|}\,\bN\right)=\begin{cases}
\begin{pmatrix}
1&0\\ 0&0
\end{pmatrix}&\text{for $\mu>0$,}\\[0.4cm]
\begin{pmatrix}
0&0\\ 0&1
\end{pmatrix}&\text{for $\mu<0$.}
\end{cases}$$
Taking the trace against $\bN$ gives the result.
\end{proof}

From Lemma~\ref{lem:bound_mu} we deduce that our sequence $\mu_n$ is bounded. Indeed, since we have by construction $\tr(G_{n+1})=N/2$ with $0<N/2<N_b$, we must have 
\begin{equation}
-\max\left(C\,,\,\frac{2C}N\right)\leq \mu_{n+1}\leq \max\left(C\,,\,\frac{C}{N_b-N/2}\right)
\label{eq:bound_mu_n} 
\end{equation}
otherwise $\tr(G_{n+1})$ would be too small or too large.

\bigskip

\noindent\textbf{Step 2: \textit{convergence of $\tilde\cE(\Upsilon_n,\Upsilon_{n+1})$.}}
We now follow~\cite{CanBri-00,CanBri-00a,Cances-00b}.
At each step, we know from Lemma~\ref{lem:Roothaan} that $\Upsilon_{n+1}$ is a solution of the minimization problem 
$\min_{\Upsilon'}\tilde\cE(\Upsilon_n,\Upsilon')$. In particular, we deduce that 
\begin{equation}
\tilde\cE(\Upsilon_{n},\Upsilon_{n+1})\leq \tilde\cE(\Upsilon_{n},\Upsilon_{n-1})=\tilde\cE(\Upsilon_{n-1},\Upsilon_{n}).
\label{eq:energy-decreasing} 
\end{equation}
Thus the sequence of real numbers $\tilde\cE(\Upsilon_{n},\Upsilon_{n+1})$ is non-increasing. It is also bounded from below, hence it converges to a limit $\ell$. We now use the uniform well-posedness to prove an inequality which is more precise than~\eqref{eq:energy-decreasing}. We remark that
\begin{align}
\tilde\cE(\Upsilon_{n},\Upsilon_{n-1})-\tilde\cE(\Upsilon_{n},\Upsilon_{n+1})&=\frac12\tr\bF_{\Upsilon_n}\big(\Upsilon_{n-1}-\Upsilon_{n+1}\big)\nonumber\\
&=\frac12\tr\big(\bF_{\Upsilon_n}-\mu_{n+1}\bN\big)\big(\Upsilon_{n-1}-\Upsilon_{n+1}\big)\nonumber\\
&\geq\frac12\tr\big|\bF_{\Upsilon_n}-\mu_{n+1}\bN\big|\big(\Upsilon_{n-1}-\Upsilon_{n+1}\big)^2\nonumber\\
&\geq\frac\eta2\tr\big(\Upsilon_{n-1}-\Upsilon_{n+1}\big)^2=\frac\eta2 \norm{\Upsilon_{n-1}-\Upsilon_{n+1}}^2.\label{eq:estim_diff_carres}
\end{align}
In the above calculation we have used that $\tr\bN\,\Upsilon_{n+1}=\tr\bN\,\Upsilon_{n-1}=N-N_b$. We have also used that $\Upsilon_{n+1}$ is the negative spectral projector of $\bF_{\Upsilon_n}-\mu_{n+1}\bN$, such that we can write
$$\big(\bF_{\Upsilon_n}-\mu_{n+1}\bN\big)=\big|\bF_{\Upsilon_n}-\mu_{n+1}\bN\big|\big(\Upsilon_{n+1}^\perp-\Upsilon_{n+1}\big).$$
Finally, we have used that
$0\leq\gamma\leq1$
is equivalent to $(\gamma-P)^2\leq P^\perp(\gamma-P)P^\perp-P(\gamma-P)P$,
for any orthogonal projector $P$, thus
$$\Upsilon_{n+1}^\perp\big(\Upsilon_{n-1}-\Upsilon_{n+1}\big)\Upsilon_{n+1}^\perp-\Upsilon_{n+1}\big(\Upsilon_{n-1}-\Upsilon_{n+1}\big)\Upsilon_{n+1}\geq \big(\Upsilon_{n-1}-\Upsilon_{n+1}\big)^2.$$
Since $\tilde\cE(\Upsilon_n,\Upsilon_{n+1})$ converges to a limit $\ell$, we deduce that
$$\sum_{n\geq1}\norm{\Upsilon_{n+1}-\Upsilon_{n-1}}^2<\ii.$$
In particular, $\norm{\Upsilon_{n+1}-\Upsilon_{n-1}}\to0$ which is called \emph{numerical convergence} in~\cite{CanBri-00,CanBri-00a}.

\bigskip

\noindent\textbf{Step 3: \textit{convergence of $\Upsilon_{2n}$ and $\Upsilon_{2n+1}$.}}
In order to upgrade the numerical convergence to true convergence, we use a recent method of Levitt~\cite{Levitt-12}. 
Namely we show that
\begin{equation}
\left(\tilde\cE(\Upsilon_{n},\Upsilon_{n-1})-\ell\right)^\theta-\left(\tilde\cE(\Upsilon_{n},\Upsilon_{n+1})-\ell\right)^\theta\geq\frac{\eta'}2\norm{\Upsilon_{n-1}-\Upsilon_{n+1}}
\label{eq:estim_diff_theta}
\end{equation}
for a well chosen $0<\theta\leq1/2$. Summing over $n$ and using the convergence of $\tilde\cE(\Upsilon_{n},\Upsilon_{n-1})$, hence of $(\tilde\cE(\Upsilon_{n},\Upsilon_{n-1})-\ell)^\theta$, then gives the convergence of $\Upsilon_{2n}$ and $\Upsilon_{2n+1}$.

For the proof of~\eqref{eq:estim_diff_theta}, we argue as follow. 
Consider a (real, no-spin) pure HFB state $\Upsilon$. It is possible to parametrize the manifold of pure HFB states around $\Upsilon$ by 
using Bogoliubov transformations as follows:
$$H\mapsto e^{H}\Upsilon e^{-H}$$
where $H$ is assumed to be of the form
$$\begin{pmatrix}
h & p\\ -p & -h
\end{pmatrix},\qquad h^T=-\overline{h}=-h,\quad p^T=\overline{p}=p.$$
These constraints ensure that $iH$ is a self-adjoint Hamiltonian such that $e^{H}=e^{-i(iH)}$ is a Bogoliubov rotation. They also ensure that $e^{H}\Upsilon e^{-H}$ stays real. That $H\mapsto e^{H}\Upsilon e^{-H}$ is a local chart of the manifold of pure HFB states around $\Upsilon$ follows from the arguments in~\cite{BacLieSol-94} as well as simple considerations in linear algebra. 

Let us now consider the energy $\tilde\cE$ in a neighborhood of any fixed $(\Upsilon,\Upsilon')$. The map
\begin{equation*}
f:(H,H')\mapsto \tilde\cE\big(e^{H}\Upsilon e^{-H},e^{H'}\Upsilon' e^{-H'}\big)
-\frac{\mu'}2\,\tr\bN e^{H}\Upsilon e^{-H} - \frac{\mu}2\,\tr\bN e^{H'}\Upsilon' e^{-H'} 
\end{equation*}
is real analytic in a neighborhood of $(0,0)$ for any fixed $\mu,\mu'\in\R$ and any fixed pure HFB states $(\Upsilon,\Upsilon')$. The {\L}ojasiewicz inequality (Theorem 2.1 in~\cite{Levitt-12}) then tells us that there exist $0<\theta\leq1/2$ and a constant $\kappa>0$ such that $\norm{H}+\norm{H'}\leq \kappa$ implies 
$$|f(H,H')-f(0)|^{1-\theta}\leq \kappa^{-1}\Big(|\nabla_{H}f(H,H')|+|\nabla_{H'}f(H,H')|\Big).$$
A simple computation shows that 
$$\nabla_{H}f(H,H')=\frac{1}2\big[\bF_{\Upsilon'}-\mu'\bN\,,\,e^{H}\Upsilon e^{-H}\big],\qquad \nabla_{H'}f(H,H')=\frac{1}2\big[\bF_{\Upsilon}-\mu\bN\,,\,e^{H'}\Upsilon' e^{-H'}\big].$$
If we rephrase all this in our setting, this means that for any fixed pure HFB states $(\Upsilon_1,\Upsilon_1')$ and any $\mu,\mu'\in\R$, there is a $\kappa>0$ such that for any $(\Upsilon_2,\Upsilon_2')$ another pure HFB state which is at most at a distance $\kappa$ from $(\Upsilon_1,\Upsilon_1')$, we have
\begin{multline}
\Big|\tilde\cE\big(\Upsilon_1,\Upsilon_1'\big) - \tilde\cE\big(\Upsilon_2,\Upsilon_2'\big) +\mu'\tr\bN (G_2-G_1) + \mu\tr\bN (G_2'-G_1')\Big|^{1-\theta}\\
\leq \kappa^{-1} \Big(\norm{\big[\bF_{\Upsilon_2'}-\mu'\bN\,,\,\Upsilon_2\big]}+\norm{\big[\bF_{\Upsilon_2}-\mu\bN\,,\,\Upsilon_2'\big]}\Big).
\end{multline}
The constants $\kappa$ and $\theta$ depend on $\mu$, $\mu'$ and of the reference point $(\Upsilon_1,\Upsilon_1')$. But they stay positive as soon as $\mu$, $\mu'$ and $(\Upsilon_1,\Upsilon_1')$ stay in a compact set. By a simple compactness argument, we therefore deduce that there exists a neighborhood of the compact set 
$$\left\{(\Upsilon,\Upsilon')\ :\ \tilde\cE(\Upsilon,\Upsilon') = \ell,\ \tr(G)=\tr(G')=N/2\right\}$$
such that for any $(\Upsilon,\Upsilon')$ in this neighborhood and $\mu,\mu'$ in a compact set in $\R$, we have
\begin{multline}
\Big|\tilde\cE\big(\Upsilon,\Upsilon'\big) - \ell +\mu'\big(N/2-\tr\bN G\big)+\mu\big(N/2-\tr\bN G'\big)\Big|^{1-\theta}\\
\leq \kappa^{-1} \Big(\norm{\big[\bF_{\Upsilon'}-\mu'\bN\,,\,\Upsilon\big]}+\norm{\big[\bF_{\Upsilon}-\mu\bN\,,\,\Upsilon'\big]}\Big)
\end{multline}
for some $0<\theta\leq1/2$ and some $\kappa>0$. We recall that $\ell$ is by definition the limit of $\tilde\cE\big(\Upsilon_n,\Upsilon_{n+1}\big)$.

Recall our inequality~\eqref{eq:bound_mu_n} which says that $\mu_n$ is uniformly bounded. Also, we know that $\tilde\cE(\Upsilon_n,\Upsilon_{n+1})$ converges to $\ell$ so, for $n$ large enough, $(\Upsilon_n,\Upsilon_{n+1})$ must be in the neighborhood of the level set $\ell$. Choosing $\mu=\mu_{n+1}$ and $\mu'=\mu_{n}$ and using that $G_n$ and $G_{n+1}$ have the correct trace, we get the estimate
\begin{align*}
\Big(\tilde\cE\big(\Upsilon_n,\Upsilon_{n-1}\big) - \ell \Big)^{1-\theta}
&\leq \kappa^{-1} \left(\norm{\big[\bF_{\Upsilon_{n}}-\mu_{n+1}\bN\,,\,\Upsilon_{n-1}\big]}+\norm{\big[\bF_{\Upsilon_{n-1}}-\mu_{n}\bN\,,\,\Upsilon_{n}\big]}\right)\\
&=\kappa^{-1} \norm{\big[\bF_{\Upsilon_{n}}-\mu_{n+1}\bN\,,\,\Upsilon_{n-1}-\Upsilon_{n+1}\big]}\\
&\leq C\norm{\Upsilon_{n-1}-\Upsilon_{n+1}}
\end{align*}
for $n$ large enough.
Here we have used that $\Upsilon_{n}$ commutes with $\bF_{\Upsilon_{n-1}}-\mu_{n}\bN$ and that $\Upsilon_{n+1}$ commutes with $\bF_{\Upsilon_{n}}-\mu_{n+1}\bN$ by construction, and that 
$\norm{\bF_{\Upsilon_n}}$ and $\mu_{n+1}$ are both uniformly bounded.
In order to conclude, we use the concavity of $x\mapsto x^\theta$ and~\eqref{eq:estim_diff_carres} like in~\cite{Levitt-12} to obtain 
\begin{align*}
&\Big(\tilde\cE\big(\Upsilon_n,\Upsilon_{n-1}\big) - \ell \Big)^{\theta}-\Big(\tilde\cE\big(\Upsilon_n,\Upsilon_{n+1}\big) - \ell \Big)^{\theta}\\
&\qquad\qquad\qquad\geq \frac{\theta}{\Big(\tilde\cE\big(\Upsilon_n,\Upsilon_{n-1}\big) - \ell \Big)^{1-\theta}}\Big(\tilde\cE\big(\Upsilon_n,\Upsilon_{n-1}\big)-\tilde\cE\big(\Upsilon_n,\Upsilon_{n+1}\big)\Big)\\
&\qquad\qquad\qquad\geq \frac{\eta\,\theta}{2\Big(\tilde\cE\big(\Upsilon_n,\Upsilon_{n-1}\big) - \ell \Big)^{1-\theta}}\norm{\Upsilon_{n+1}-\Upsilon_{n-1}}^2\\
&\qquad\qquad\qquad\geq \eta\theta/(2C)\,\norm{\Upsilon_{n+1}-\Upsilon_{n-1}}
\end{align*}
by~\eqref{eq:estim_diff_carres}. This concludes the proof of the inequality~\eqref{eq:estim_diff_theta}, hence the proof of the convergence of $(\Upsilon_{2n},\Upsilon_{2n+1})$, towards some pure HFB states $(\Upsilon,\Upsilon')$.

\bigskip

\noindent\textbf{Step 4: \textit{the limit $(\Upsilon,\Upsilon')$ of $(\Upsilon_{2n},\Upsilon_{2n+1})$ is a critical point of $\tilde\cE$.}}
Since we have $\Upsilon_{2n}\to\Upsilon$ and $\Upsilon_{2n+1}\to\Upsilon'$, we deduce that $\bF_{\Upsilon_{2n}}\to \bF_{\Upsilon}$ and $\bF_{\Upsilon_{2n+1}}\to \bF_{\Upsilon'}$, by continuity of the map $\Upsilon\mapsto\bF_\Upsilon$. Extracting a subsequence, we can assume that $\mu_{2n_k}\to\mu'$ and $\mu_{2n_k+1}\to\mu$. We have 
$$\Upsilon_{2n_k}=\1_{(-\ii,0)}(\bF_{\Upsilon_{2n_k-1}}-\mu_{2n_k}\bN),\qquad \Upsilon_{2n_k+1}=\1_{(-\ii,0)}(\bF_{\Upsilon_{2n_k}}-\mu_{2n_k+1}\bN)$$ 
and, by uniform well-posedness,
$$\big|\bF_{\Upsilon_{2n_k-1}}-\mu_{2n_k}\bN\big|\geq \eta,\qquad \big|\bF_{\Upsilon_{2n_k}}-\mu_{2n_k+1}\bN\big|\geq \eta.$$
Passing to the limit $k\to\ii$ we get
$$\Upsilon=\1_{(-\ii,0)}(\bF_{\Upsilon'}-\mu'\bN)\quad\text{and}\quad \Upsilon'=\1_{(-\ii,0)}(\bF_{\Upsilon}-\mu\bN).$$
This exactly means that $(\Upsilon,\Upsilon')$ is a critical point of $\tilde\cE$ on $\cP_h(N/2)\times\cP_h(N/2)$. Note that we have also
$\big|\bF_{\Upsilon'}-\mu'\bN\big|\geq \eta$ and $\big|\bF_{\Upsilon}-\mu\bN\big|\geq \eta$.

The remaining statements are verified exactly like in the HF case. This concludes the proof of Theorem~\ref{thm:Roothaan}.\hfill\qed
\end{proof}

\subsection{Optimal Damping Algorithm}\label{sec:ODA}

In the previous section we have studied the convergence properties of the Roothaan algorithm, which consists in solving the self-consistent equation by a fixed point method. We have seen that the algorithm can either converge or oscillate between two states, none of them being a solution to the problem.

Examples of such oscillations in quantum chemistry have been exhibited by Cancès and Le Bris~\cite{CanBri-00,CanBri-00a}. In this case the potential $W$ is repulsive and there is no pairing. In order to cure this problem of oscillations, Cancès and Le Bris proposed in~\cite{CanBri-00a} a \emph{relaxed algorithm} called the \emph{Optimal Damping Algorithm} (ODA). This method makes use of the important fact that one can minimize over mixed states and get the same ground state as when minimizing over pure states only (Theorem~\ref{thm:Lieb}).

The same oscillations can \emph{a priori} happen in HFB with an attractive potential $W$. They are frequently seen with the Roothaan algorithm and we will give several numerical examples later in Section~\ref{sec:num}. Even when the sequence $\Upsilon_n$ eventually converges towards a single state $\Upsilon$, these oscillations can slow down the convergence considerably. This phenomenon is well known in nuclear physics. Dechargé and Gogny already advocate in~\cite{DecGog-80} the use of a \emph{damping parameter} between two successive iterations, in order to \emph{``slow down the convergence on the density matrix. In this way the average field varies slowly and we can insure the convergence on the pairing tensor step by step''} (see~\cite{DecGog-80} page 1574). Even in the modern computations, this damping parameter is fixed all along the algorithm (Nathalie Pillet, private communication). 

We suggest to transpose the method of Cancès and Le Bris to the HFB setting by using an optimal damping parameter, chosen such as to minimize the energy. This means resorting to mixed states even if the final ground state is always a pure HFB state. This is theoretically justified when the assumptions of the Bach-Fröhlich-Jonsson Theorem~\ref{thm:variational_constrained} are fulfilled.

The ODA involves two density matrices $\Upsilon_n$ and $\tilde\Upsilon_n$. The HFB state $\Upsilon_n$ is always pure but $\tilde\Upsilon_n$ can (and will usually) be a mixed HFB state. The starting point $\Upsilon_0=\tilde\Upsilon_0$ being chosen, the sequence is then constructed by induction as follows:
\begin{enumerate}
\item One finds $(\Upsilon_{n+1},\mu_{n+1})$ solving
$$\Upsilon_{n+1}=\1_{(-\ii,0)}\big(\bF_{\tilde\Upsilon_n}-\mu_{n+1}\bN\big)\quad\text{and}\quad \tr(G_{n+1})=N/2.$$
This is always possible, by Lemma~\ref{lem:Roothaan} and we can take as before
$$\mu_{n+1}:=\frac{\partial_-I_{\tilde\Upsilon_n}(N/2)+\partial_+I_{\tilde\Upsilon_n}(N/2)}{2},$$
in case $0$ is in the spectrum of $\bF_{\Upsilon_n}-\mu_{n+1}\bN$.
\item One lets 
$$\tilde\Upsilon_{n+1}=t_{n+1}\tilde\Upsilon_{n}+(1-t_{n+1})\Upsilon_{n+1}$$
where the damping parameter $t_{n+1}\in[0,1]$ is chosen such as to minimize the (quadratic) function
$$t\mapsto \cE\big(t\tilde\Upsilon_{n}+(1-t)\Upsilon_{n+1}\big).$$
\item The algorithm is stopped when $\norm{[\Upsilon_{n},\bF_{\Upsilon_n}]}$ and/or $\norm{\Upsilon_{n+1}-\Upsilon_{n}}$ are smaller than a prescribed $\varepsilon$.
\end{enumerate}

\begin{figure}[h]
\centering
\includegraphics[width=10cm]{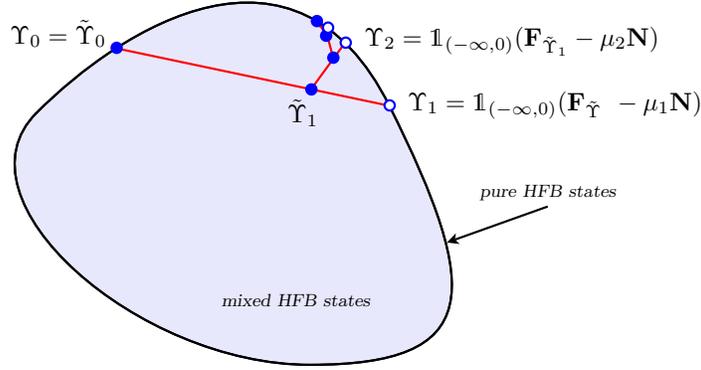}

\caption{The Optimal Damping Algorithm of Cancès \& Le Bris in the HFB case.\label{fig:ODA}}
\end{figure}

The general strategy of the ODA is displayed in Figure~\ref{fig:ODA}. By construction we see that $\cE(\tilde\Upsilon_n)$ is a non-increasing sequence. This guarantees the convergence of the ODA. The result is the following

\begin{theorem}[Convergence of the ODA]\label{thm:ODA}
Assume that $0<N/2<N_b$.
Let $\Upsilon_0=\tilde\Upsilon_0$ be an initial HFB state such that the sequence $(\Upsilon_n)$ generated by the ODA is \emph{uniformly well posed}, that is
\begin{equation}
\forall n,\qquad \big|\bF_{\tilde\Upsilon_n}-\mu_{n+1}\bN\big|\geq \eta>0.
\end{equation}
Then 
\begin{itemize}
\item The sequence $\cE(\tilde\Upsilon_{n})$ decreases towards a critical value of $\cE$;

\medskip

\item The sequence $\Upsilon_{n}$ \emph{numerically} converges towards a critical point $\Upsilon$ of $\cE$, in the sense that $\Upsilon_{n+1}-\Upsilon_{n}\to0$, $\Upsilon_{n+1}-\tilde\Upsilon_n\to0$ and that all the limit points $\Upsilon$ of subsequences of $(\Upsilon_n)$ solve $\Upsilon=\1_{(-\ii,0)}(\bF_\Upsilon-\mu\bN)$.
\end{itemize}
\end{theorem}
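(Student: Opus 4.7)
The argument relies on the fact that, by construction, $\cE(\tilde\Upsilon_n)$ is non-increasing: since $t=1$ is admissible in the line-search, $\cE(\tilde\Upsilon_{n+1})\leq \cE(t\tilde\Upsilon_n+(1-t)\Upsilon_{n+1})|_{t=1}=\cE(\tilde\Upsilon_n)$. Combined with the lower bound of Lemma~\ref{lem:bd-below}, this gives convergence of $\cE(\tilde\Upsilon_n)$ to some limit $\ell$. The whole point of the proof is then to turn this into a \emph{quantitative} decrease, from which numerical convergence follows.

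\textbf{Step 1: quantitative decrease.} Introduce $\phi_n(t):=\cE\big(t\tilde\Upsilon_n+(1-t)\Upsilon_{n+1}\big)$. Because $\cE$ is quadratic in $\Upsilon$, $\phi_n$ is a polynomial of degree at most two in $t$, and its coefficients can be estimated as follows. First, since $\Upsilon_{n+1}$ and $\tilde\Upsilon_n$ both have $\tr(G)=N/2$, one has $\tr\bN(\Upsilon_{n+1}-\tilde\Upsilon_n)=0$, and repeating the computation~\eqref{eq:estim_diff_carres} used for Roothaan gives
\begin{equation*}
-\phi_n'(1)=-\tr(\bF_{\tilde\Upsilon_n}-\mu_{n+1}\bN)(\tilde\Upsilon_n-\Upsilon_{n+1})\geq \eta\,\norm{\tilde\Upsilon_n-\Upsilon_{n+1}}^2,
\end{equation*}
where the key ingredient is the operator inequality $(\gamma-P)^2\leq P^\perp\gamma P^\perp + P(1-\gamma)P$ applied to $\gamma=\tilde\Upsilon_n$, $P=\Upsilon_{n+1}$, together with uniform well-posedness. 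On the other hand, $|\phi_n''|\leq C_0\,\norm{\tilde\Upsilon_n-\Upsilon_{n+1}}^2$ for a constant $C_0$ depending only on the (finite) dimension and on the interaction $W$. A short case analysis on the sign of $\phi_n''$ then shows that the optimal choice $t_{n+1}\in[0,1]$ yields
\begin{equation*}
\cE(\tilde\Upsilon_n)-\cE(\tilde\Upsilon_{n+1})\geq c\,\norm{\Upsilon_{n+1}-\tilde\Upsilon_n}^2
\end{equation*}
with $c:=\min\bigl(\eta/2,\eta^2/(2C_0)\bigr)>0$: if $\phi_n''\leq 0$, the concave $\phi_n$ is minimized at $t=1$ and the gradient bound suffices; if $\phi_n''>0$, the interior minimum (or the clipped value at $t=1$) gives the estimate with the other constant. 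Summing this inequality in $n$ and using that $\cE(\tilde\Upsilon_n)\downarrow\ell$ yields $\sum_n\norm{\Upsilon_{n+1}-\tilde\Upsilon_n}^2<\ii$, so in particular $\Upsilon_{n+1}-\tilde\Upsilon_n\to 0$. Using moreover $\tilde\Upsilon_{n+1}-\Upsilon_{n+1}=t_{n+1}(\tilde\Upsilon_n-\Upsilon_{n+1})$ and $t_{n+1}\in[0,1]$, we get $\tilde\Upsilon_{n}-\Upsilon_{n}\to 0$, hence $\Upsilon_{n+1}-\Upsilon_n=(\Upsilon_{n+1}-\tilde\Upsilon_n)+(\tilde\Upsilon_n-\Upsilon_n)\to0$.

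\textbf{Step 2: subsequential limits are critical.} Let $\Upsilon_{n_k}\to\Upsilon$ be any convergent subsequence (possible since the set of HFB density matrices is compact in finite dimension). By Step~1 we also have $\tilde\Upsilon_{n_k-1}\to\Upsilon$, hence $\bF_{\tilde\Upsilon_{n_k-1}}\to\bF_\Upsilon$ by continuity. The chemical potentials $\mu_{n_k}$ are uniformly bounded by Lemma~\ref{lem:bound_mu}, so we may extract a further subsequence along which $\mu_{n_k}\to\mu$. Uniform well-posedness $|\bF_{\tilde\Upsilon_{n_k-1}}-\mu_{n_k}\bN|\geq\eta$ ensures that $0$ stays outside the spectrum in the limit, and the spectral projector $\1_{(-\ii,0)}$ is continuous in a neighborhood of such operators, so passing to the limit in $\Upsilon_{n_k}=\1_{(-\ii,0)}(\bF_{\tilde\Upsilon_{n_k-1}}-\mu_{n_k}\bN)$ gives $\Upsilon=\1_{(-\ii,0)}(\bF_\Upsilon-\mu\bN)$. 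Since $\cE(\tilde\Upsilon_{n_k})\to\ell$ and $\tilde\Upsilon_{n_k}\to\Upsilon$, continuity of $\cE$ yields $\cE(\Upsilon)=\ell$, which is therefore a critical value.

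\textbf{Main obstacle.} The delicate step is the quantitative gradient bound $-\phi_n'(1)\geq\eta\,\norm{\tilde\Upsilon_n-\Upsilon_{n+1}}^2$: it is crucial that the shift by $\mu_{n+1}\bN$ is ``free'' (thanks to the identical particle numbers), and that uniform well-posedness is formulated precisely on $\bF_{\tilde\Upsilon_n}-\mu_{n+1}\bN$ rather than on $\bF_{\tilde\Upsilon_n}$ alone (which is never bounded away from $0$ since its spectrum is symmetric about $0$). Once this estimate is available, the rest is essentially a transcription of the Canc\`es--Le Bris argument~\cite{CanBri-00a}, the only genuine novelty with respect to the HF case being the need to track the Lagrange multiplier $\mu_n$ along the iterations, which is handled through the uniform bound of Lemma~\ref{lem:bound_mu}. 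Note that, unlike for Roothaan, we do not obtain convergence of the whole sequence $\Upsilon_n$ (only numerical convergence and compactness of limit points), so no {\L}ojasiewicz argument is needed here.
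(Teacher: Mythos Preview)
Your approach is essentially the paper's: establish a quantitative decrease $\cE(\tilde\Upsilon_n)-\cE(\tilde\Upsilon_{n+1})\geq c\,\norm{\tilde\Upsilon_n-\Upsilon_{n+1}}^2$ via the uniform well-posedness and the bound on the quadratic term, sum to get square-summability, deduce numerical convergence, and pass to the limit using compactness together with Lemma~\ref{lem:bound_mu}. The paper writes the expansion as $\cE(\tilde\Upsilon_n)-t a_{n+1}+t^2 b_{n+1}$ and bounds the minimum directly by $-\epsilon a_{n+1}$ without your explicit sign-of-$\phi_n''$ case split, but this is cosmetic.

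There is, however, a sign/parametrization slip in your Step~1. With your convention $\phi_n(t)=\cE(t\tilde\Upsilon_n+(1-t)\Upsilon_{n+1})$ one has $\phi_n(1)=\cE(\tilde\Upsilon_n)$ and
\[
\phi_n'(1)=\tr(\bF_{\tilde\Upsilon_n}-\mu_{n+1}\bN)(\tilde\Upsilon_n-\Upsilon_{n+1})=a_{n+1}\geq\eta\,\norm{\tilde\Upsilon_n-\Upsilon_{n+1}}^2,
\]
which is \emph{positive}, not negative. Consequently in the concave case the minimum is at $t=0$ (not $t=1$, which would give $\tilde\Upsilon_{n+1}=\tilde\Upsilon_n$ and zero decrease), and one gets $\phi_n(1)-\phi_n(0)\geq\phi_n'(1)\geq\eta\,\norm{\cdot}^2$; in the convex case the interior or clipped minimum gives $(\phi_n'(1))^2/(2\phi_n'')\geq(\eta^2/2C_0)\norm{\cdot}^2$ as you wrote. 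Once this sign is corrected your argument goes through verbatim.
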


\begin{proof}
The proof is exactly the same as in the Hartree-Fock case~\cite{Cances-00b,CanBriMad-06} and we only sketch it. First we have by definition $\cE(\tilde\Upsilon_{n+1})\leq \cE(\tilde\Upsilon_n)$, so $\cE(\tilde\Upsilon_n)$ must converge to a limit $\ell$. Now we have 
\begin{equation*}
\cE(\tilde\Upsilon_{n+1})=\cE\big((1-t_{n+1})\tilde\Upsilon_{n}+t_{n+1}\Upsilon_{n+1}\big)=\cE(\tilde\Upsilon_{n})-t_{n+1}a_{n+1}+t_{n+1}^2b_{n+1}
\end{equation*}
where 
$$t_{n+1}={\rm argmin}_{t\in[0,1]}\big(-ta_{n+1}+t^2b_{n+1}\big)$$
and with
$$a_{n+1}:=\tr\bF_{\tilde\Upsilon_n}\big(\tilde\Upsilon_n-\Upsilon_{n+1}\big)=\tr|\bF_{\tilde\Upsilon_n}-\mu\bN|\big(\tilde\Upsilon_n-\Upsilon_{n+1}\big)^2\geq\eta \norm{\tilde\Upsilon_n-\Upsilon_{n+1}}^2,$$
\begin{multline*}
b_{n+1}=2\tr (\tilde G_{n+1}-G_n)J(\tilde G_{n+1}-G_n)-\tr (\tilde G_{n+1}-G_n)K(\tilde G_{n+1}-G_n)\\
+\tr (\tilde A_{n+1}-A_n)K(\tilde A_{n+1}-A_n).
\end{multline*}
In finite dimension we have $|b_{n+1}|\leq C\norm{\tilde\Upsilon_{n+1}-\Upsilon_n}^2\leq (C/\eta)a_{n+1}$. This can be used to prove that
$$-t_{n+1}a_{n+1}+t_{n+1}^2b_{n+1}\leq -\epsilon\, a_{n+1}\leq -\epsilon\,\eta\norm{\tilde\Upsilon_n-\Upsilon_{n+1}}^2$$
for some $\epsilon>0$ independent of $n$. This now proves that 
$$\sum_{n}\norm{\tilde\Upsilon_n-\Upsilon_{n+1}}^2<\ii,$$
hence that $\tilde\Upsilon_n-\Upsilon_{n+1}\to0$. In order to conclude the proof, we notice that 
$$\Upsilon_{n+1}-\Upsilon_{n}=\Upsilon_{n+1}-\tilde\Upsilon_{n}+(1-t_n)\big(\tilde\Upsilon_{n-1}-\Upsilon_{n}\big)$$
which finally implies
$$\sum_{n}\norm{\Upsilon_n-\Upsilon_{n+1}}^2<\ii\quad\text{and}\quad\sum_{n}\norm{\tilde\Upsilon_n-\tilde\Upsilon_{n+1}}^2<\ii.$$
Since $\Upsilon_{n+1}=\1_{(-\ii,0)}(\bF_{\tilde\Upsilon_n}-\mu_{n+1}\bN)$ by definition, the proof that any limit $\Upsilon$ of a subsequence of $(\Upsilon_n)$ satisfies the self-consistent equation is elementary.
\end{proof}

\subsection{Handling constraints}\label{sec:constraint}

Both the Roothaan algorithm and the ODA are based on Lemma~\ref{lem:Roothaan} which says that for any given $\bF_\Upsilon$, there exist $\mu'$, $\delta'$ and $\Upsilon'$ such that 
\begin{equation}
\begin{cases}
\Upsilon'=\1_{(-\ii,0)}\big(\bF_\Upsilon-\mu'\bN\big)+\delta',\\
\tr\bN \Upsilon'=N-N_b.
\end{cases}
\label{eq:iteration}
\end{equation}
The purpose of this section is to explain how to solve this problem numerically. To simplify our notation, we consider in this section a generic matrix
\begin{equation}
\bF=\begin{pmatrix}
h&p\\ p&-h
\end{pmatrix},\qquad\text{with  $p=\overline{p}=p^T$ and $h=\overline{h}=h^T$} 
\label{eq:form_F_abstract}
\end{equation}
and we study the problem consisting in finding $\Upsilon$, $\mu$ and $\delta$ such that
\begin{equation}
\begin{cases}
\Upsilon=\1_{(-\ii,0)}\big(\bF-\mu\bN\big)+\delta,\\
\tr\bN \Upsilon=N-N_b.
\end{cases}
\label{eq:iteration_abstract}
\end{equation}

Assume first that $p\equiv0$ (Hartree-Fock case). Then we have
$$\bF=\begin{pmatrix}
h&0\\ 0&-h
\end{pmatrix}$$
which commutes with $\bN$. The solution of~\eqref{eq:iteration_abstract} is then given by the \emph{aufbau principle},
$$\Upsilon=\begin{pmatrix}
G&0\\ 0&1-G
\end{pmatrix},\qquad G=\1_{(-\ii,\mu)}(h)+\delta$$
where $\mu$ is the $(N/2)$th eigenvalue of $h$, counted with multiplicity and $\delta$ lives in the corresponding eigenspace. 
Equivalently,
$$G=\sum_{i=1}^K v_iv_i^T+\sum_{i=K+1}^{K'}n_i\, v_iv_i^T$$
where the $v_i$'s solve the eigenvalue equation
$$h\,v_i=\epsilon_i\,v_i,$$
$K=\tr\1_{(-\ii,\epsilon_{N/2})}(h)$ is the dimension of the direct sum of all the eigenspaces corresponding to the eigenvalues $<\epsilon_{N/2}$ and $K'=\tr\1_{(-\ii,\epsilon_{N/2}]}(h)$ is the dimension of the direct sum of all the eigenspaces corresponding to the eigenvalues $\leq\epsilon_{N/2}$. The $n_i$'s are chosen such that
$$0\leq n_i\leq 1,\qquad K+\sum_{i=K+1}^{K'}n_i=\frac{N}2.$$
Therefore, finding $\Upsilon$, $\mu$ and $\delta$ in the Hartree-Fock case only requires to diagonalize $h$ once.

In the Hartree-Fock-Bogoliubov case ($p\neq0$), the situation is more complicated since $\bN$ does \emph{not} commute with $\bF$. Let us consider the real function
\begin{equation}
\nu_\bF:\mu\mapsto \nu(\mu)=\frac{\tr\bN\,\1_{(-\ii,0)}\big(\bF-\mu\bN\big)+N_b}2. 
\label{eq:def_nu}
\end{equation}
We are interested in solving the equation
$$\nu_\bF(\mu)=N/2.$$
In the Hartree-Fock case, $\nu_\bF$ is a non-decreasing piecewise constant function. There is a solution $\mu$ to $\nu_\bF(\mu')=N/2$ when $N/2$ belong to the range of $\nu_\bF$. Otherwise, one has to partially fill a shell using the matrix $\delta$.

In the Hartree-Fock-Bogoliubov case, $\nu_\bF$ is also non-decreasing and in general it is much smoother when $p\neq0$. The following lemma summarizes some important properties of $\nu_\bF$ in both the HF and HFB cases.

\begin{lemma}[Elementary properties of $\nu$]\label{lem:prop_nu}
Let $\bF$ be as in~\eqref{eq:form_F_abstract}. Then the function $\nu_\bF$ defined in~\eqref{eq:def_nu} is increasing with respect to $\mu$. It can only have finitely many jumps.
It satisfies for some constant $C$ depending only on $N_b$
\begin{itemize}
\item $\nu(\mu)\leq C/\mu$ for $\mu\leq -C$;
\item $\nu_\bF(\mu)\geq N_b-C/\mu$ for $\mu\geq C$.
\end{itemize}
If $0\notin \sigma\big(\bF-\mu\bN\big)$, then
\begin{equation}
\frac{d\nu_\bF}{d\mu}(\mu)=2\sum_{\substack{\epsilon_i<0\\ \epsilon_j>0}}\frac{\left|\pscal{v_j,\bN v_i}\right|^2}{\epsilon_j-\epsilon_i}\geq0
\label{eq:derivee_nu}
\end{equation}
where $(\bF-\mu\bN)v_i=\epsilon_i\,v_i$.
\end{lemma}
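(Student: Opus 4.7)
The four items are essentially independent and each admits a short proof. I would handle monotonicity and the finiteness of jumps purely variationally, the asymptotic bounds by the same perturbation argument already used in Lemma~\ref{lem:bound_mu}, and the derivative formula by a contour-integration argument equivalent to second-order analytic perturbation theory.

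For monotonicity, the plan is to invoke the variational principle
$$\tr\bigl((\bF-\mu\bN)^{-}\bigr) = \min_{0\leq\Pi\leq I}\tr\bigl((\bF-\mu\bN)\Pi\bigr),$$
whose minimizer is $\Pi_\mu := \1_{(-\ii,0)}(\bF-\mu\bN)$ (plus possibly a piece in the kernel). A standard two-point comparison at $\mu_1<\mu_2$ yields, after addition of the two optimality inequalities and cancellation of the $\bF$-terms, $(\mu_2-\mu_1)\bigl(\tr\bN\Pi_{\mu_2}-\tr\bN\Pi_{\mu_1}\bigr)\geq 0$, so $\mu\mapsto\tr\bN\Pi_\mu$, and hence $\nu_\bF$, is non-decreasing. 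The finiteness of jumps is then immediate: discontinuities of $\Pi_\mu$ can only occur where $0\in\sigma(\bF-\mu\bN)$, and these values of $\mu$ are zeros of the polynomial $\mu\mapsto\det(\bF-\mu\bN)$ of degree at most $2N_b$.

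For the asymptotic bounds, I would imitate the proof of Lemma~\ref{lem:bound_mu}: when $|\mu|\geq 2\|\bF\|$, regular perturbation theory about $-\mu\bN$ gives $\bigl\|\Pi_\mu-\1_{(-\ii,0)}(-\mu\bN)\bigr\|\leq C/|\mu|$, with limiting projector $\mathrm{diag}(I,0)$ as $\mu\to+\ii$ and $\mathrm{diag}(0,I)$ as $\mu\to-\ii$. Taking the trace against $\bN$ and converting via $\nu_\bF=(\tr\bN\Pi_\mu+N_b)/2$ gives the two announced one-sided estimates.

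The derivative formula is the main point. I would start from the Cauchy representation
$$\Pi_\mu = \frac{1}{2\pi i}\oint_\Gamma (z-\bF+\mu\bN)^{-1}\,dz,$$
where the contour $\Gamma$ encloses exactly the negative eigenvalues; the assumption $0\notin\sigma(\bF-\mu\bN)$ allows one to fix $\Gamma$ on a neighborhood of $\mu$, so $\Pi_\mu$ is real-analytic there. Differentiating under the integral, tracing against $\bN$, and expanding $(z-\bF+\mu\bN)^{-1}=\sum_k(z-\epsilon_k)^{-1}|v_k\rangle\langle v_k|$ in the eigenbasis of $\bF-\mu\bN$ reduces the computation to a double sum $\sum_{i,j}$ of contour integrals $(2\pi i)^{-1}\oint_\Gamma dz/\bigl((z-\epsilon_i)(z-\epsilon_j)\bigr)$. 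A residue count shows that the only nonzero contributions come from pairs $(\epsilon_i,\epsilon_j)$ of opposite sign, the same-sign pairs cancelling by antisymmetry in $i\leftrightarrow j$; this produces the announced expression, whose non-negativity is then manifest. The main subtlety I anticipate is compatibility with degenerate nonzero eigenvalues, which is handled by working with the spectral projector onto each eigenspace rather than with individual eigenvectors, and which causes no problem since the denominators $\epsilon_j-\epsilon_i$ involve only eigenvalues of opposite signs, and are therefore bounded away from zero precisely because $0\notin\sigma(\bF-\mu\bN)$.
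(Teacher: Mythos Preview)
Your proposal is correct. The asymptotic bounds and the derivative formula are handled essentially as in the paper: the former by referring to the perturbation argument of Lemma~\ref{lem:bound_mu}, the latter by the Riesz--Dunford contour representation, which is just an explicit way of carrying out the ``usual perturbation methods'' the paper invokes.

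The genuinely different piece is your treatment of monotonicity. The paper obtains it in two stages: first, on each open interval where $0\notin\sigma(\bF-\mu\bN)$, it uses the derivative formula~\eqref{eq:derivee_nu} to get $d\nu_\bF/d\mu\geq0$; second, to show that the jumps at the finitely many exceptional $\mu_k$'s are nonnegative, it appeals to a density argument based on Lemma~\ref{lem:generic_nu} (for a generic $\bF$ there are no jumps at all, so one approximates). Your two-point comparison from the variational characterization of $\1_{(-\ii,0)}(\bF-\mu\bN)$ gives global monotonicity in one stroke, including across the jump points, and avoids any reference to Lemma~\ref{lem:generic_nu}. This is cleaner and more robust. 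Likewise, your argument for finitely many jumps via the polynomial $\mu\mapsto\det(\bF-\mu\bN)$ is more elementary than the paper's appeal to Kato's analytic families of eigenvalues, though the latter also yields the real-analyticity of $\nu_\bF$ away from the $\mu_k$'s, which you recover anyway from the contour integral.
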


\begin{proof}
The behavior of $\nu_\bF$ for $|\mu|\gg1$ was already studied in Lemma~\ref{lem:bound_mu}.

The matrix $\bF-\mu\bN$ is a linear function of $\mu\in\R$, hence by~\cite{Kato}, we know that its eigenvalues form a set of real analytic functions. They cannot be constant because the matrix $\bN$ does not vanish. The eigenvalues of $\bF-\mu\bN$ all behave like $\pm\mu$ for large $\mu$, by perturbation theory. We conclude that $0$ can be an eigenvalue of $\bF-\mu\bN$ for a finite number of $\mu$'s, say $\mu_1<\cdots<\mu_K$. On the other hand, the map $\mu\mapsto \1_{(-\ii,0)}\big(\bF-\mu\bN\big)$ is real-analytic outside of the $\mu_k$'s (see~\cite{Kato}). So $\nu_\bF$ is itself real-analytic outside of this set and it can have at most a finite number of jumps. 

Outside of the $\mu_k$'s, it is possible to compute the derivative of $\nu_\bF$ by usual perturbation methods~\cite{Kato}. The answer is~\eqref{eq:derivee_nu} and the fact that $d\nu_\bF/d\mu\geq0$ proves that $\nu_\bF$ is increasing with respect to $\mu$, in between these points. That the jumps are all positive can be easily seen by an approximation argument using Lemma~\ref{lem:generic_nu} below. We skip the details. 
\end{proof}

The shape of the function $\nu_\bF$ is very different in the HF and HFB cases. For a Hartree-Fock state, the function $\nu_\bF$ is piecewise constant and it has jumps at the eigenvalues $\epsilon_1<\cdots<\epsilon_{N_b}$ of $h_{G}$. The size of the jumps is equal to the multiplicity of the associated eigenvalue. An HFB state will most always have a very smooth $\nu_\bF$. Of course, the smaller $p$ in the Hamiltonian $\bF$, the more $\nu_\bF$ looks like a step function.

In Figure~\ref{fig:fn_nu} below, we show the function $\nu_\bF$ for different values of the pairing term. More precisely, we have randomly chosen two symmetric real matrices $h$ and $p$ of size $N_b=5$, and we display the function $\nu_\bF$ when the pairing is replaced by $tp$ for $t=0$ (Hartree-Fock case), $t=0.1$ and $t=1$. Figure~\ref{fig:fn_eigenvals} is a plot of the eigenvalues of $\bF-\mu\bN$ for $t=0.1$, as functions of $\mu$. Note that there are some crossings of eigenvalues above and below the real line (recall that the spectrum is symmetric with respect to 0). But, around $0$ the crossings are avoided and there is a gap.

\begin{figure}[h!]
\centering
\includegraphics[width=9cm]{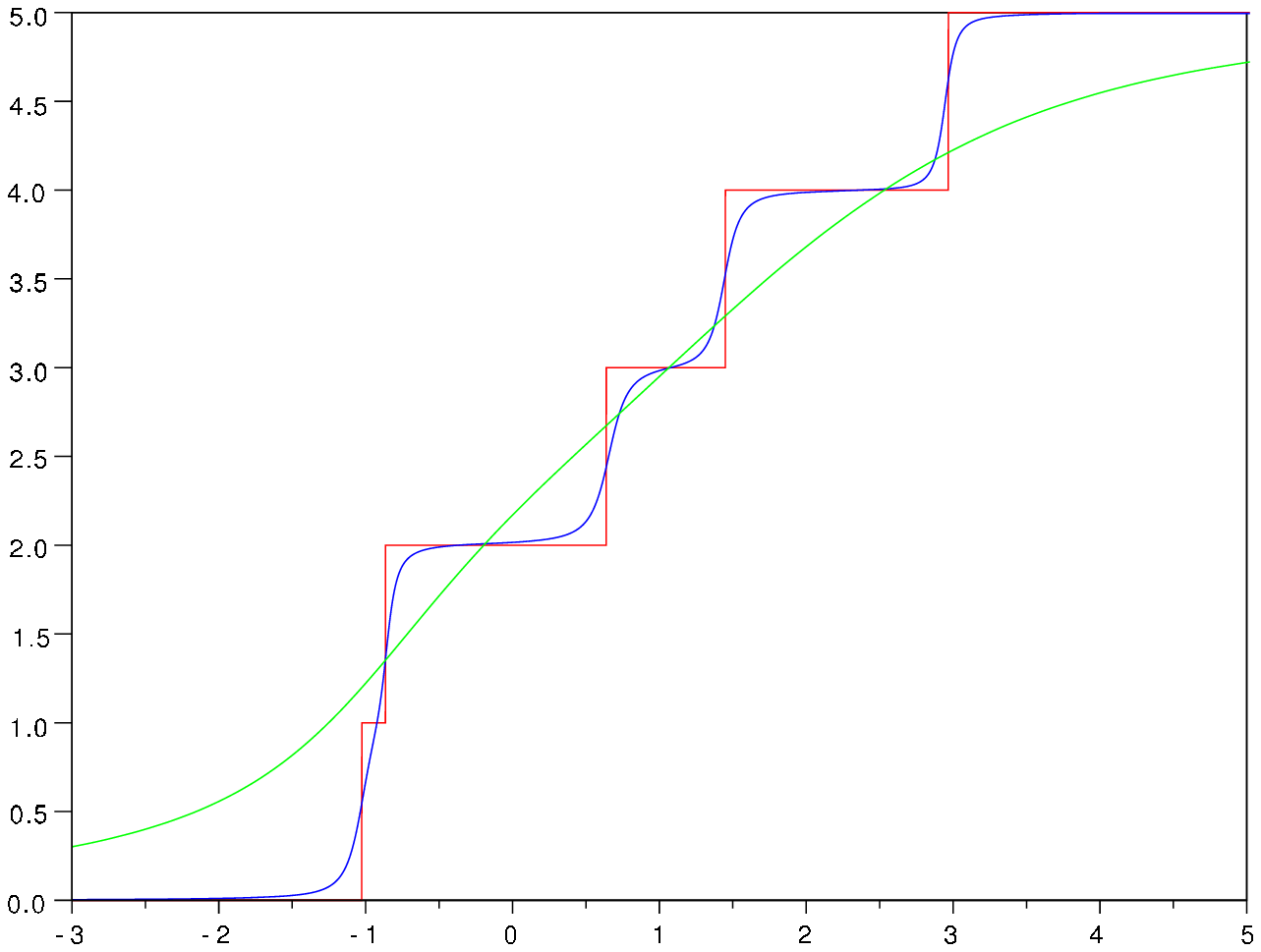}
\caption{The function $\nu_\bF(\mu)$ which gives the average number of particles in the state $\1_{(-\ii,0)}(\bF-\mu\bN)$, in terms of the chemical potential $\mu$. The pairing term in $\bF$ is equal to $tp$ with $t=0$ (Hartree-Fock case, red curve), $t=0.1$ (blue curve) and $t=1$ (green curve).\label{fig:fn_nu}}

\bigskip\bigskip\bigskip

\includegraphics[width=9cm]{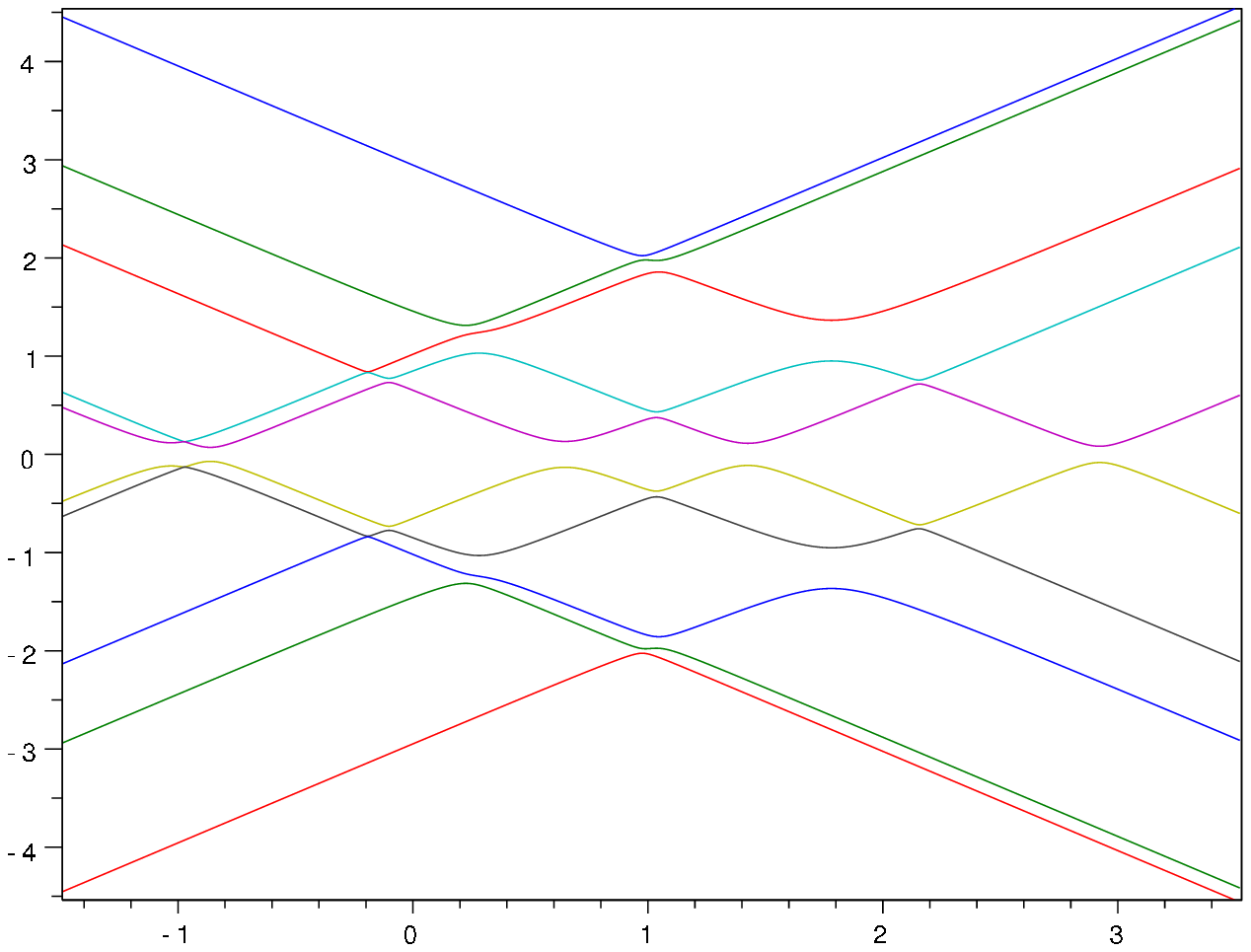}
\caption{The eigenvalues of $\bF-\mu\bN$ in terms of $\mu$ for $t=0.1$. \label{fig:fn_eigenvals}}
\end{figure}

If we repeat the numerical experiment with several random matrices $h$ and $p$, we \emph{never} see any jump for $\nu_\bF$. The purpose of the next result is to clarify this observation.

\begin{lemma}[Generic behavior of $\nu_\bF$]\label{lem:generic_nu}
The Fock matrix $\bF-\mu\bN$ is invertible if and only if $h\pm ip-\mu$ are invertible. More precisely, 
\begin{equation}
\min\sigma\big(|\bF-\mu\bN|\big)=\min\left(\norm{(h+ip-\mu)^{-1}}^{-1}\,,\,\norm{(h-ip-\mu)^{-1}}^{-1}\right).
\label{eq:estim_spec_F_mu}
\end{equation}
The set of real symmetric matrices $h$ and $p$ such that 
$$\sigma\big(\bF-\mu\bN\big)\cap\{0\}=\emptyset\quad \text{for all $\mu\in\R$}$$
is open and dense in $\{(h,p)\ :\ h=h^T=\overline{h},\ p=p^T=\overline{p}\}$. For $h$ and $p$ in this set, $\nu_\bF$ is real-analytic on $\R$.
\end{lemma}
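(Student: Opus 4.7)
My plan is to reduce the spectral problem for the $2N_b\times 2N_b$ real symmetric matrix $\bF-\mu\bN$ to that of the $N_b\times N_b$ complex matrix $h\pm ip-\mu$ via an explicit unitary block-antidiagonalization, and then to deduce all three conclusions from this reduction. Concretely, I would introduce the complex block unitary
$$V:=\frac{1}{\sqrt{2}}\begin{pmatrix} I & iI \\ iI & I \end{pmatrix}\quad\text{on }\C^{N_b}\oplus\C^{N_b}.$$
Using crucially that $h=h^*$ and $p=p^*$ are real symmetric, a direct matrix computation gives
$$V(\bF-\mu\bN)V^*=\begin{pmatrix} 0 & L^* \\ L & 0 \end{pmatrix},\qquad L:=i(h-ip-\mu),$$
the two diagonal $N_b\times N_b$ blocks cancelling exactly. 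Since this matrix is self-adjoint and off-diagonal, its spectrum consists of the signed singular values $\{\pm\sigma_j(L)\}$. Multiplication by the unit scalar $i$ preserves singular values, and $(h-ip-\mu)^*=h+ip-\mu$ gives $\sigma_j(h-ip-\mu)=\sigma_j(h+ip-\mu)$, so I obtain
$$\min\sigma\big(|\bF-\mu\bN|\big)=\sigma_{\min}(h\pm ip-\mu)=\norm{(h\pm ip-\mu)^{-1}}^{-1},$$
which is the identity \eqref{eq:estim_spec_F_mu} and in particular establishes the invertibility equivalence.

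\textbf{Openness and density.} By the first step, the condition ``$\bF-\mu\bN$ is invertible for every $\mu\in\R$'' is equivalent to $\sigma(h+ip)\cap\R=\emptyset$. Openness then follows at once from the continuity of the spectrum with respect to the entries of $h$ and $p$, in finite dimension. For density, given any $(h_0,p_0)$, I would translate the pairing by a multiple of the identity, $p_\epsilon:=p_0+\epsilon I$. The spectrum transforms cleanly: $\sigma(h_0+ip_\epsilon)=\sigma(h_0+ip_0)+i\epsilon$. Since $h_0+ip_0$ has only finitely many eigenvalues, only finitely many $\epsilon$ push one of them onto $\R$. Arbitrarily small positive $\epsilon$ therefore give perturbations $(h_0,p_\epsilon)$ of $(h_0,p_0)$ in the good set.

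\textbf{Real-analyticity of $\nu_\bF$ and main obstacle.} When $(h,p)$ lies in the good set, $0\notin\sigma(\bF-\mu\bN)$ for all $\mu\in\R$, and the Dunford--Taylor representation
$$\1_{(-\ii,0)}(\bF-\mu\bN)=-\frac{1}{2\pi i}\oint_{\Gamma}\big(\bF-\mu\bN-zI\big)^{-1}\,dz,$$
with $\Gamma\subset\C$ a contour enclosing the negative eigenvalues and avoiding $0$, gives real-analytic dependence on $\mu$: the same contour $\Gamma$ works in a neighborhood of each $\mu_0\in\R$ since eigenvalues never cross $0$, and the resolvent under the integral is analytic in $\mu$. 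Taking the trace against $\bN$ then shows that $\nu_\bF$ is real-analytic on $\R$. The only non-routine step is the block-antidiagonal identity in the first paragraph; it rests on the particle-hole symmetry $C(\bF-\mu\bN)C=-(\bF-\mu\bN)$ combined with the reality of $h$ and $p$, and once this structural reduction is in hand, everything else is standard finite-dimensional perturbation theory.
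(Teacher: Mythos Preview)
Your proof is correct and follows essentially the same route as the paper: both conjugate $\bF-\mu\bN$ by the unitary $V=\frac{1}{\sqrt2}\begin{pmatrix}I&iI\\ iI&I\end{pmatrix}$ to obtain an off-diagonal block form with blocks $\pm i(h\mp ip-\mu)$, and read off invertibility and \eqref{eq:estim_spec_F_mu} from there. Your singular-value phrasing and the paper's inverse-norm computation are equivalent; in fact you observe correctly that $\norm{(h+ip-\mu)^{-1}}=\norm{(h-ip-\mu)^{-1}}$ since the two matrices are adjoints, so the $\min$ in \eqref{eq:estim_spec_F_mu} is superfluous. Where the paper simply asserts that the set where $\sigma(h\pm ip)\cap\R=\emptyset$ is open and dense, you supply the explicit perturbation $p\mapsto p+\epsilon I$ shifting the spectrum by $i\epsilon$, and you also spell out the Dunford--Taylor contour argument for real-analyticity of $\nu_\bF$; both are welcome clarifications of points the paper leaves to the reader.
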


It is obvious that there are matrices $h$ and $p$ for which $\bF-\mu\bN$ has $0$ as eigenvalue for some $\mu\in\R$. The simplest examples are HF Hamiltonians for which $p\equiv0$ and $\bF-\mu\bN$ is not invertible each time $\mu$ equals an eigenvalue of $h$. If $p$ does not vanish but commutes with $h$, then we have $|h+ip-\mu|^2=|h-ip-\mu|^2=(h-\mu)^2+p^2$ and we see that $0$ is never in the spectrum of $\bF-\mu\bN$ when the kernel of $p$ does not contain the eigenvectors of $h$. However there are counterexamples with $p$ invertible not commuting with $h$. For instance, $\bF+\bN$ is not invertible for 
$$h=\begin{pmatrix}
-1 & 0\\ 0 & 2
\end{pmatrix},\qquad 
p=\begin{pmatrix}
0 & 2\\ 2 & 0
\end{pmatrix}.$$
We now turn to the proof of Lemma~\ref{lem:generic_nu}.

\begin{proof}
The operator $\bF-\mu\bN$ is unitarily equivalent to
$$
\begin{pmatrix}
\frac{1}{\sqrt{2}} & \frac{i}{\sqrt{2}}\\ \frac{i}{\sqrt{2}} & \frac{1}{\sqrt{2}}
\end{pmatrix}
\big(\bF-\mu\bN\big) 
\begin{pmatrix}
\frac{1}{\sqrt{2}} & -\frac{i}{\sqrt{2}}\\ -\frac{i}{\sqrt{2}} & \frac{1}{\sqrt{2}}
\end{pmatrix}
=-i
\begin{pmatrix}
0 & h+ip-\mu\\
-(h-ip-\mu) & 0
\end{pmatrix}.
$$
From this we deduce that $\bF-\mu\bN$ is invertible if and only if $h+ip-\mu$ and $h-ip-\mu$ are invertible. 
Then we have
$$\big(\bF-\mu\bN\big)^{-1}
=i
\begin{pmatrix}
\frac{1}{\sqrt{2}} & -\frac{i}{\sqrt{2}}\\ -\frac{i}{\sqrt{2}} & \frac{1}{\sqrt{2}}
\end{pmatrix}
\begin{pmatrix}
0 & -(h-ip-\mu)^{-1}\\
(h+ip-\mu)^{-1} & 0
\end{pmatrix}
\begin{pmatrix}
\frac{1}{\sqrt{2}} & \frac{i}{\sqrt{2}}\\ \frac{i}{\sqrt{2}} & \frac{1}{\sqrt{2}}
\end{pmatrix}
$$
and
$$\norm{\big(\bF-\mu\bN\big)^{-1}}=\max\Big(\norm{(h+ip-\mu)^{-1}}\;,\; \norm{(h-ip-\mu)^{-1}}\Big).$$
The statement now follows from the fact that, on a dense open set, the spectra of $h\pm ip$ do not intersect the real axis.
\end{proof}

Lemma~\ref{lem:generic_nu} is interesting when we apply the Roothaan or the ODA, because it means that, as soon as $p\neq0$, most often we will have no choice for $\mu_{n+1}$ and we will take $\delta_{n+1}=0$. Saying differently, it is really reasonable to assume that the sequence generated by the Roothaan and the ODA are uniformly well posed (of course when the final state is believed to have a non vanishing pairing), as we did in Theorem~\ref{thm:Roothaan} and~\ref{thm:ODA}.

Even if it is in general smooth, the function $\nu$ can still vary quickly and this will be the case when the pairing term $p$ is small. The appropriate method to find the solution of $\nu_\bF(\mu)=N/2$ then depends on the properties of $\nu_\bF$. If the Hamiltonian $\bF$ has a large enough pairing matrix $p$, then $\nu_\bF$ is smooth and we can use a Newton-like method to solve the equation $\nu_\bF(\mu)=N/2$. A trial chemical potential $\mu^0$ being given, we compute the derivative $\partial\nu_\bF/\partial\mu(\mu^0)$ using Formula~\eqref{eq:derivee_nu} and then let
$$\mu^1:=\mu^0+\big(N/2-\nu_\bF(\mu^0)\big)\left(\frac{\partial\nu_\bF}{\partial\mu}(\mu^0)\right)^{-1}.$$
The method can be iterated until convergence of $\mu^n$ towards the desired $\mu$. The convergence is very fast, as soon as $\nu_\bF$ is smooth.

If the Hamiltonian $\bF$ has a small pairing matrix $p$, the function $\nu_\bF$ will be smooth but close to a step function. Its derivative varies very quickly and the previous Newton method is not appropriate. In this case we can use a simple bisection method. The bounds on $\nu_\bF(\mu)$ for large $|\mu|$ can be used to find a good starting interval $[\mu_l,\mu_r]$ such that $\nu_\bF(\mu_l)<N/2$ and $\nu_\bF(\mu_r)>N/2$.

We have to find a new $\mu_{n+1}$ at each step of the Roothaan or ODA. It is of course not efficient to find $\mu_{n+1}$ with a very high precision all along the algorithm. Dechargé and Gogny advice in Section II.E of ~\cite{DecGog-80} to apply the Newton scheme only once at each step. This means that 
$$\mu_{n+1}=\mu_n+\big(N/2-\nu_n(\mu_n)\big)\left(\frac{\partial\nu_n}{\partial\mu}(\mu_n)\right)^{-1}$$
where $\nu_n$ is the function $\nu$ corresponding to $\bF_\Upsilon=\bF_{\Upsilon_n}$.
This is then the same as doing perturbation theory on first order. We use a slightly different strategy which we explain in the next section.

\section{Numerical results}\label{sec:num}

In this section, we present some numerical results for two very simple interactions $W$. We start by considering in Section~\ref{sec:gravitation} a purely 3-dimensional gravitational model in which 
$$W(x)=-\frac{g}{|x|},\qquad g>0.$$
Then we consider in Section~\ref{sec:Coulomb-perturbed} a (repulsive) Coulomb potential which is perturbed at intermediate distances by an attractive effective potential, as is usually employed in nuclear physics:
$$W(x)=\frac{\kappa}{|x|}-a_1\,\text{exp}\left(-b_1{|x|^2}\right)+a_2\,\text{exp}\left(-b_2|x|^2\right),\qquad \kappa>0.$$
In the next section we quickly explain our numerical technique to treat these two simple systems.

\subsection{Method}

To simulate our physical systems, we have used the open source software Scilab~\cite{Scilab-software}. Our potential $W$ is always real, radial and spin-independent. To reduce the numerical cost we have therefore always imposed the spin, time-reversal and spherical symmetry. This means that we have to cope with $\ell_{\rm max}+1$ real and symmetric $N_b\times N_b$ matrices $G^\ell$ and $A^\ell$ (the one-particle density matrix and the pairing density matrix in the $\ell$th angular momentum sector). The total energy of the system is given by Equation~\eqref{eq:discretized_energy_nospin_radial} and we have to impose the constraints~\eqref{eq:discretized_constraint_ell} and~\eqref{eq:contrainte_N_ell} which we recall here for convenience:
\begin{equation}
0\leq\Upsilon^\ell\bS\Upsilon^\ell\leq\Upsilon^\ell,\quad\text{with}\quad  
\Upsilon^\ell:=\begin{pmatrix}
G^\ell&A^\ell\\ A^\ell&\ S^{-1}-G^\ell\end{pmatrix}\quad\text{and}\quad
\bS=\begin{pmatrix}
S&0\\ 0&S\end{pmatrix},
\label{eq:discretized_constraint_ell_bis}
\end{equation}
\begin{equation}
\sum_{\ell=0}^\lmax(2\ell+1)\tr(SG^\ell)=N/2.
\label{eq:contrainte_N_ell_bis} 
\end{equation}

We choose a simple basis set $(\chi_1,...,\chi_{N_b})$ of $L^2([0,\ii),r^2dr)$, made of ``hat functions'' associated with a chosen grid
$$0=r_0<r_1<\cdots <r_{N_b}<r_{N_b+1}:=r_{\rm max}.$$
We impose Dirichlet boundary conditions at $r_{\rm max}$. We have tested different types of grids and there was no important difference between them. The results presented here are all with regular grids. As we will explain later, for a given basis size $N_b$, the results usually depend a lot on the value of the radius $r_{\rm max}$ of the ball in which the system is placed.

Our main goal is to investigate the existence of pairing. We therefore always start by doing a precise Hartree-Fock calculation, for which we use the Optimal Damping Algorithm described in Section~\ref{sec:ODA}. We take as initial state a simple uniform state 
\begin{equation}
G_{\rm init}=\frac{N}{2\tr(S)}{\rm Id}_{N_b}
\label{eq:initial_state} 
\end{equation}
and we run HF until convergence. We have observed a global stability of the results with respect to initial states, hence the previous simple choice is appropriate (but more clever choices might decrease the total number of iterations).
Then, we use the converged HF state $G_{\rm opt}$ as initial datum for the HFB algorithm. Of course we have to perturb it a little bit since any HF solution is also an HFB solution. We proceed as follows. Assuming that the overlap matrix $S={\rm Id}_{N_b}$ and that $\ell_{\rm max}=0$ for simplicity, the optimal HF state $G_{\rm opt}$ can be written in the form
$$G_{\rm opt}=\sum_{k=1}^{N/2}v_kv_k^T,$$
where $v_k$ are the $N/2$ first eigenvectors of the mean-field matrix $h$,
$$hv_k=\epsilon_k\,v_k.$$
We then choose a number $n_v$ of valence orbitals and a mixing parameter $\theta$, and we perturb $G_{\rm opt}$ as follows
$$G_{\rm init}'=\sum_{k=1}^{N/2-n_v}v_kv_k^T+\theta\sum_{k=N/2-n_v+1}^{N/2}v_kv_k^T+(1-\theta)\sum_{k=N/2+1}^{N/2+n_v}v_kv_k^T,$$
$$A_{\rm init}'=\sqrt{\theta(1-\theta)}\sum_{k=N/2-n_v+1}^{N/2+n_v}v_kv_k^T.$$
In most cases, we have observed that $n_v=1$ and $\theta=0.95$ works perfectly well, that is, the algorithm escapes from the HF solution $G_{\rm opt}$ and converges towards an optimal HFB state. But other values of $n_v$ and $\theta$ seem to work fine also.

When the maximum angular momentum $\ell_{\rm max}$ is larger than 0, we often first run the algorithm with $\ell_{\rm max}=0$ for a few iterations before switching to the actual value of $\ell_{\rm max}$. We stop the algorithm when the commutators $[F_n^\ell,\Upsilon_{n}^\ell]$ are smaller than a prescribed error. 
We know from~\eqref{eq:generalized_form_Gamma_ell} and~\eqref{eq:generalized_eig_pb_ell} that these commutators must all vanish for an exact solution of the discretized HFB minimization problem.
In terms of the matrix $\bS$, the right quantity to look at is
$$\sum_{\ell=0}^{\ell_{\rm max}}\norm{\bS^{-\frac12}\big(\bF_n^\ell\Upsilon_n^\ell \bS - \bS\Upsilon_n^\ell \bF_n^\ell\big)\bS^{-\frac12}}$$
where $\norm{\cdot}$ is the usual operator norm for $(2N_b)\times(2N_b)$ matrices. There is a similar formula in the HF case~\cite{CanDefKutLeBMad-03}.

As we have explained, in the HFB case, ensuring the constraint~\eqref{eq:contrainte_N_ell_bis} is not as easy as in the HF case. In the beginning of the algorithm, our state $\Upsilon$ is rather close to an HF state by construction. Therefore, the function $\nu_\bF(\mu)$ defined in Section~\ref{sec:constraint} is close to a step function. We choose an error $\varepsilon$ and look for the next states $\Upsilon^\ell_{n+1}$ having a total number of particles $\sum_{\ell=0}^{\ell_{\rm max}}(2\ell+1)\tr(S G^\ell_{n+1})$ close to $N/2$, within the error $\varepsilon$, using a simple bisection method. We use the bisection for a fixed number of global iterations. Then, when the pairing term is large enough, we switch to a Newton method in order to find the state $\Upsilon_{n+1}$. We have observed that even if in the beginning several Newton iterations can be employed at each step, usually only one Newton iteration is necessary after a while. To guarantee a good value of the average number of particles in the end, we decrease the error $\varepsilon$ on $|\sum_{\ell=0}^{\ell_{\rm max}}(2\ell+1)\tr(S G^\ell_{n+1})-N/2|$ along the algorithm.

\subsection{Pure Newtonian interaction}\label{sec:gravitation}

\subsubsection{Model}

Here we consider a system of $N$ spin-$1/2$ neutral particles, only interacting through the Newtonian interaction
\begin{equation}
W(x)=-\frac{g}{|x|},\qquad g>0.
\label{eq:Newton}
\end{equation}
This potential is strongly attractive at short distances. Since $1/|x|$ does not decay too fast at infinity, it is also quite attractive at large distances. The kinetic energy does not scale the same as the potential energy. By a simple scaling argument, we can therefore always assume that 
$$g\equiv1.$$

This model can be used to describe neutron stars and white dwarfs when $N\gg1$. It has been particularly studied from a theoretical point of view in the pseudo-relativistic case where the kinetic energy is given by $T=\sqrt{c^4m^2-c^2\Delta}-mc^2$, see~\cite{LieThi-84,LieYau-87,LenLew-10}. In our simulations we restrict ourselves to the non-relativistic case of the Laplacian $T=-\Delta/(2m)$ (in units such that $m=1/2$). It would be interesting to take $N$ large but this is of course much too difficult from a numerical point of view.

As mentioned before, we always impose the spin and time-reversal symmetries, which is perfectly justified for the ground state since the interaction~\eqref{eq:Newton} satisfies the assumption of the Bach-Fröhlich-Jonsson Theorem~\ref{thm:variational_constrained}. We also impose spherical symmetry which, on the contrary, is not known to hold for the true ground state.

One advantage of the Newtonian interaction~\eqref{eq:Newton} is that the operators $J$ and $K^{\ell\ell'}$ can be explicitely computed in the basis of hat functions. We have shown in Section~\ref{sec:spherical-symm} that the energy can be expressed in terms of
$$(ij|mn)_{\ell,\ell'}= \int_0^\ii r^2\,dr\int_0^\ii s^2\,ds\;\chi_i(r)\,\chi_j(r)\,\chi_m(s)\,\chi_n(s)\, w_{\ell,\ell'}(r,s)$$
where
$$w_{\ell,\ell'}(r,s)=\frac12\int_{-1}^1W\left(\sqrt{r^2+s^2-2rst}\right)\, P_\ell(t)\,P_{\ell'}(t)\,dt=-\frac12\int_{-1}^1\frac{P_\ell(t)\,P_{\ell'}(t)}{\sqrt{r^2+s^2-2rst}}\,dt.$$
Using the well-known formula
$$\frac{1}{\sqrt{r^2+s^2-2rst}}=\sum_{n=0}^\ii \frac{\min(r,s)^n}{\max(r,s)^{n+1}}P_n(t)$$
we deduce that
$$w_{\ell,\ell'}=-\frac12\sum_{n=0}^\ii\left(\int_{-1}^1P_nP_\ell P_{\ell'}\right)\frac{\min(r,s)^n}{\max(r,s)^{n+1}}.$$
The integral over the Legendre polynomials is related to the usual Clebsch-Gordan coefficients as follows
$$\frac12\int_{-1}^1P_n(t)\,P_\ell(t)\, P_{\ell'}(t)\,dt=\begin{pmatrix}
\;\ell\;&\;\ell'\;&\;n\;\\ 0 & 0 & 0
\end{pmatrix}^2$$
and only a finite number of terms are non zero in the sum over $n$.
The final result can be expressed as
\begin{multline}
(ij|mn)_{\ell,\ell'}\\=-\sum_{n=0}^\ii\begin{pmatrix}
\;\ell\;&\;\ell'\;&\;n\;\\ 0 & 0 & 0
\end{pmatrix}^2\int_0^\ii r^2\,dr\int_0^\ii s^2\,ds\frac{\min(r,s)^n}{\max(r,s)^{n+1}}\;\chi_i(r)\,\chi_j(r)\,\chi_m(s)\,\chi_n(s).
\label{eq:formula_ijmn_Newton} 
\end{multline}
These integrals can be explicitely computed for hat functions and $0\leq \ell,\ell'\leq \ell_{\rm max}$ with $\ell_{\rm max}$ not too large. In our numerical experiments we have put the explicit formulas in Scilab for $\ell_{\rm max}=1$. The integrals were stored in memory during the whole calculation.

\subsubsection{Roothaan vs ODA}

In the HF case, we have observed that the Roothaan algorithm very often oscillates between two states, none of them being the solution of the problem (as described in Theorem~\ref{thm:Roothaan} and in~\cite{CanBri-00a}). The Roothaan algorithm seems more well behaved in the HFB case. With the model presented in this section, we never got real oscillations for HFB. Sometimes the convergence is improved by using the ODA, but in most cases the Roothaan algorithm always converges towards the same state as the ODA in the end. As we will see later, the situation is very different for the model studied in Section~\ref{sec:Coulomb-perturbed}, which is inspired of nuclear physics.

We start by comparing Roothaan and ODA in the HF case. There, oscillations seem to be related to the size of the gap between the largest filled eigenvalue and the smallest unfilled one. Indeed, oscillations in HF seem to only occur when there is pairing in HFB, an effect which is also well-known to be related to the size of the gap (see, e.g., Theorem 5 in~\cite{BacFroJon-09}). When there is no pairing, the HF Roothaan algorithm always behaves like the ODA. However, the situation is complex and there is no exact rule. Sometimes the Roothaan algorithm does \emph{not} oscillate even when the gap is rather small and there is pairing.

In Figure~\ref{fig:oscillations} we display the value of the energy obtained along the algorithm for the Roothaan and the ODA, for the following choice of parameters: $N=6$, $N_b=200$, $\ell_{\rm max}=0$ and $r_{\rm max}=30$. The ODA converges in about 17 iterations, whereas the Roothaan algorithm oscillates. We also show the value of the norms $\norm{G_{n}-G_{n-1}}$ and $\norm{G_{n}-G_{n-2}}$ along the Roothaan algorithm. The oscillation between two points is clearly demonstrated.

When we decrease the parameter $r_{\rm max}$ but keep $N_b=200$ constant, the gap is seen to increase slightly and the Roothaan algorithm behaves better. In Table~\ref{tab:gap}, we give the numerical value of the last filled eigenvalue and the corresponding gap. The Roothaan algorithm slowly converges for $r_{\rm max}=25$ and it coincides with the ODA when $r_{\rm max}=20$. The gap for $r_{\rm max}=20$ is $2.5$ times the one for  $r_{\rm max}=30$.
We will discuss the occurence of pairing in terms of the parameter $r_{\rm max}$ in the next section.
\begin{table}[h!]
\centering
\begin{tabular}{|c|c|c|c|}
\hline
$r_{\rm max}$ & $\epsilon_{N/2}$ & $\epsilon_{N/2+1}-\epsilon_{N/2}$ & behavior of HF Roothaan\\
\hline
20 & -0.532430& 0.159430 & fast convergence\\
\hline
25 & -0.536706 & 0.081016 & slow convergence\\
\hline
30 & -0.529200& 0.061928 & oscillations\\
 & -0.548554& 0.067422 & \\
\hline
\end{tabular}
\caption{Value of the last filled eigenvalue $\epsilon_{N/2}$ and the corresponding gap $\epsilon_{N/2+1}-\epsilon_{N/2}$ in HF, for $N=6$, $N_b=200$ and $\ell_{\rm max}=0$. For $r_{\rm max}=30$ the Roothaan algorithm oscillates and we display the last filled eigenvalue and the gap for the two states.\label{tab:gap}}
\end{table}

As we have mentioned the Roothaan algorithm is usually much more well behaved in the HFB case. However, sometimes the convergence can be improved dramatically by using the ODA. In Figure~\ref{fig:roothaan_slow_HFB} we display the energy along the iterations of the algorithm in both the Roothaan and ODA cases, for $N_b=500$, $N=16$, $\ell_{\rm max}=1$ and $r_{\rm max}=10$. In this case the Roothaan algorithm is very badly behaved. It passes very close to the HF ground state and it takes it a very long time to escape from it. On the other hand, the ODA does not suffer from this problem and it converges much more rapidly.

\begin{figure}[h!]
\centering

\bigskip

\includegraphics[width=7.1cm]{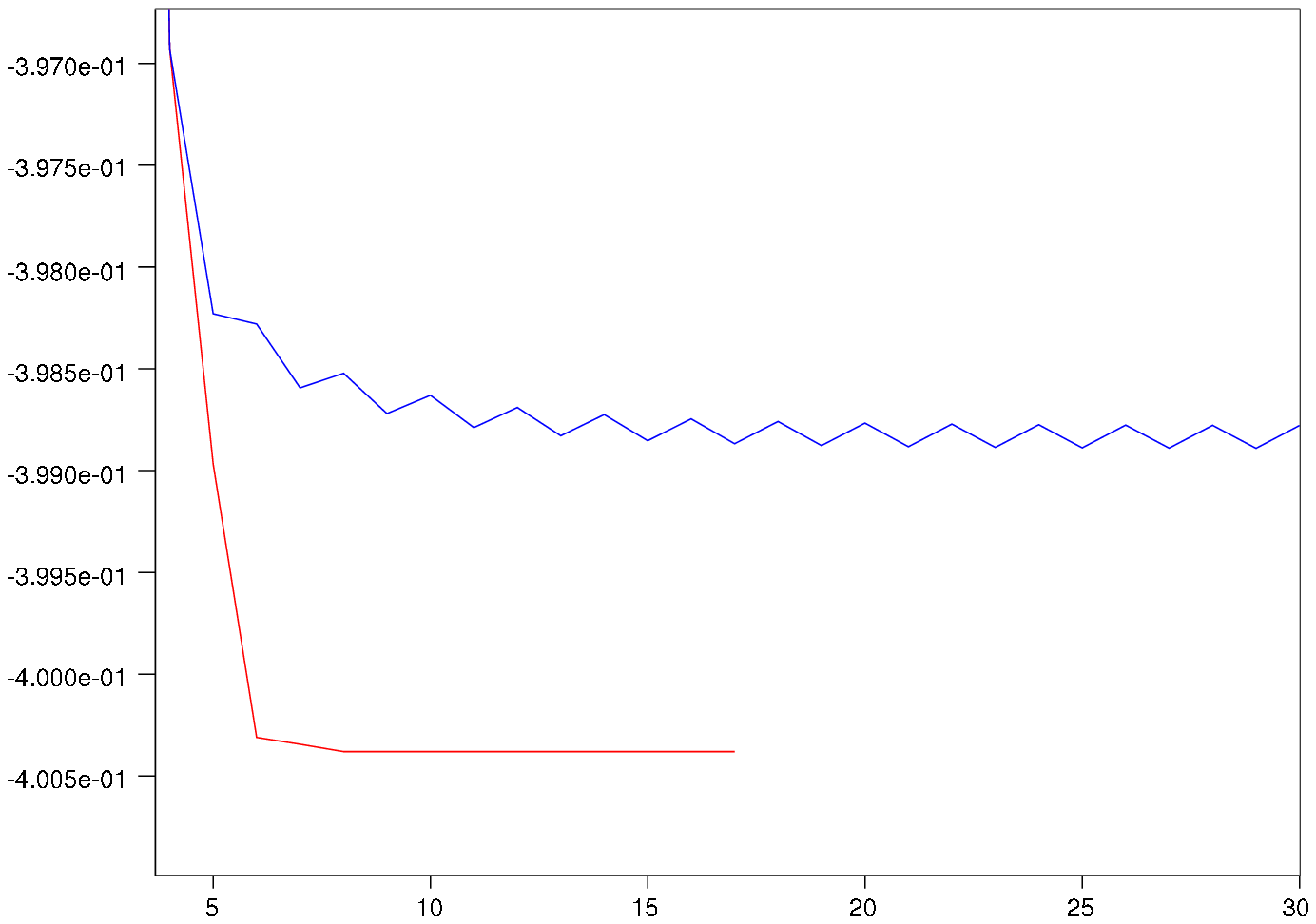} \includegraphics[width=6.6cm]{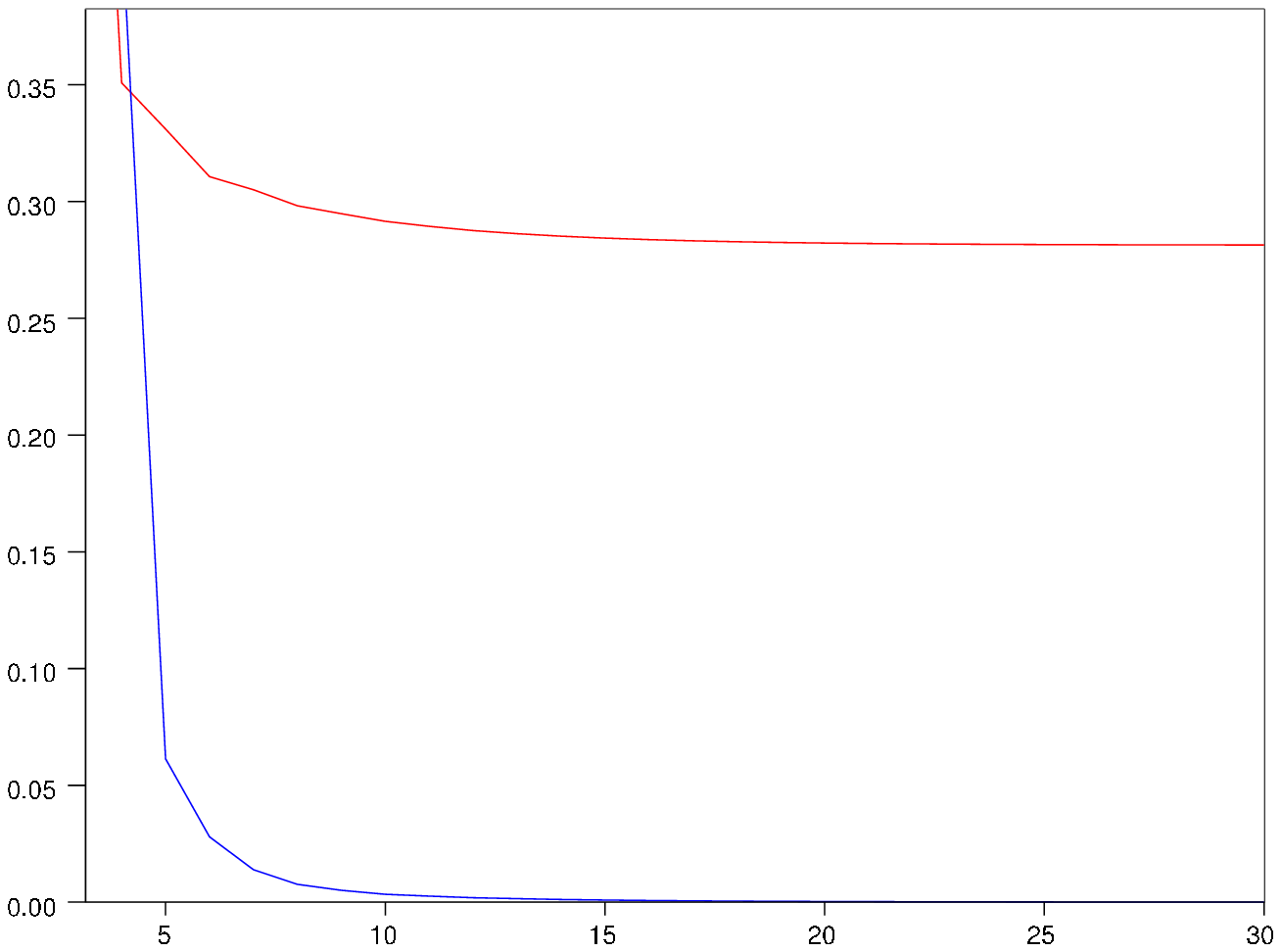}
\caption{\textit{Left:} HF energy along the iterations for the Roothaan Algorithm (blue) and the ODA (red). \textit{Right:} Values of $\norm{G_{n}-G_{n-1}}$ (red) and $\norm{G_{n}-G_{n-2}}$ (blue) along the Roothaan algorithm, showing the oscillations between two states. Here $N=6$, $N_b=200$, $\ell_{\rm max}=0$ and $r_{\rm max}=30$.\label{fig:oscillations}}

\bigskip\bigskip

\includegraphics[width=14cm]{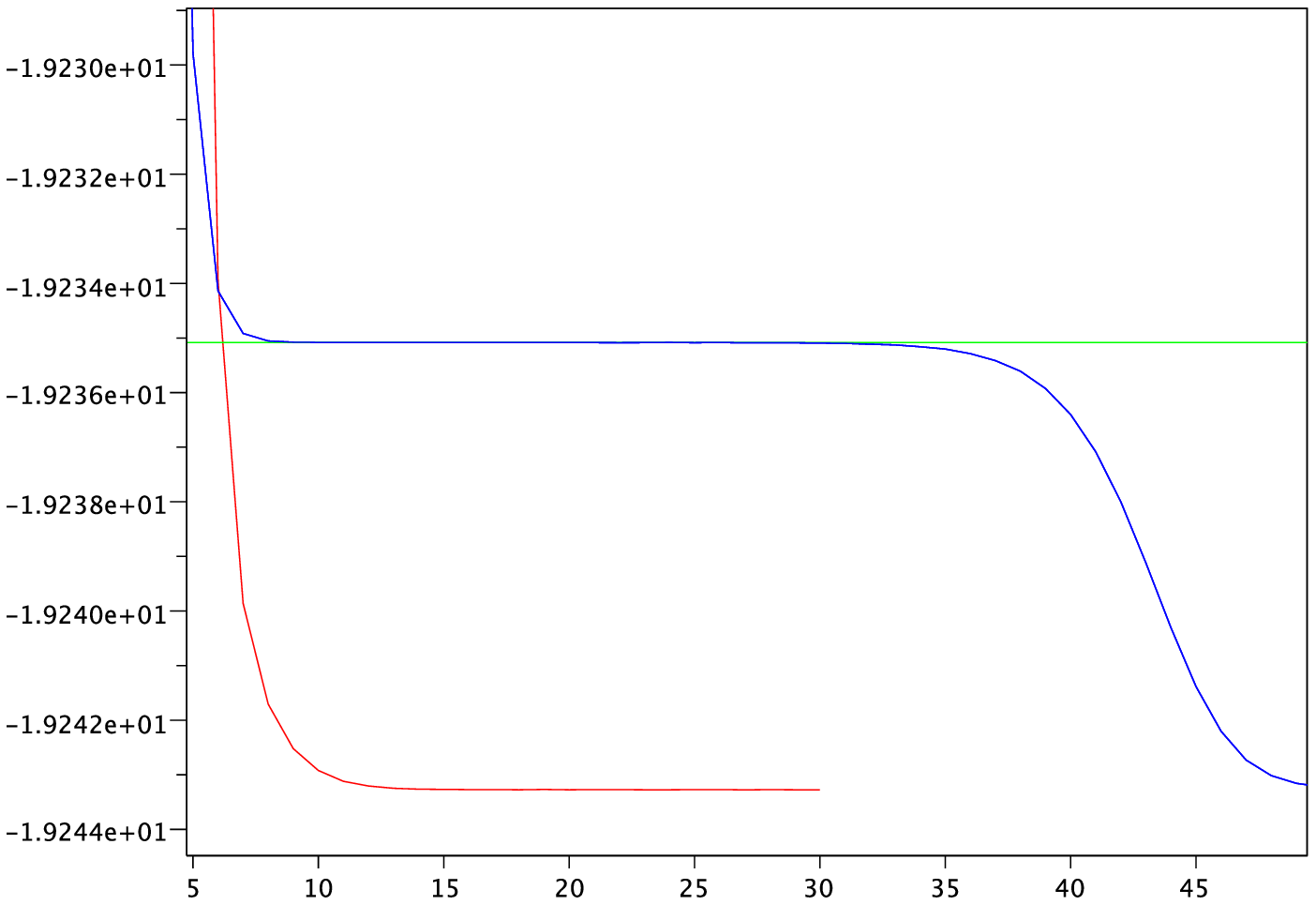}
\caption{HFB energy along the iterations of the Roothaan (blue) and the ODA (red) for $N_b=500$, $N=16$, $\ell_{\rm max}=1$ and $r_{\rm max}=10$. The optimal HF energy is also displayed (green).\label{fig:roothaan_slow_HFB}}
\end{figure}

\subsubsection{Numerical evidence of pairing}

Pairing effects in a finite discretization basis might depend on the properties of the basis. As we have explained, the occurence of pairing is related to the size of the gap in HF theory and this gap varies with the radius $r_{\rm max}$ in which the system is confined. If $r_{\rm max}$ is decreased the system is more condensed and the HF gap increases.

In Figure~\ref{fig:energies_rmax} we display the HF and HFB ground state energies computed for $N=16$ in a basis set of size $N_b=200$, in terms of $r_{\rm max}$. The HF and HFB curves are distinct for $r_{\rm max}$ large enough and they merge at $r_{\rm max}=6$ approximately. This observation is confirmed by the value of the norm of $A$ plotted on the right of the same figure. The minima of the HF and HFB ground state energies are attained at about $r_{\rm max}\simeq 10$ in the HF case and $r_{\rm max}\simeq 10.5$ in the HFB case, which is sufficiently far from the merging point. The minima of these curves correspond to the best possible approximation for a given basis size $N_b$ (here $N_b=200$) and a given type of grid (here regular). The difference between the corresponding HF and HFB energies is significant. The HF ground state energy at $r_{\rm max}=10$ is $-19.232176$ (in our units in which $m=1/2$ and $e=1$), whereas the HFB ground state energy at $r_{\rm max}=10.5$ is $-19.240176$. The norm of the pairing matrix $A$ is rather large at this point: 
$$\norm{A}=\sqrt{\tr(SA_0SA_0)+3\tr(SA_1SA_1)}\simeq0.462129.$$ 
This goes in favour of the conclusion that pairing really occurs for $N=16$ in this model. 
This intuition is confirmed by a more precise calculations with $N_b=500$ which we discuss below.

The observation of pairing requires to have an appropriate $r_{\rm max}$ but it does \emph{not} require to have a very large basis set. Even for $N_b=30$ and $r_{\rm max}=10.5$, we already find that the HFB energy is approximately $-19.078416$ whereas the HF energy is about $-19.072954$. The corresponding norm of the pairing matrix $A$ is $\norm{A}\simeq0.424124$.

Pairing is a subtle effect which decreases the energy by a small amount (much less than one percent here). Catching this effect requires to be very careful when choosing the radius $r_{\rm max}$. Taking $r_{\rm max}$ too small might lead to the conclusion that there is no pairing. In our simulations we have always observed the occurence of pairing, but provided we choose $r_{\rm max}$ appropriately. The values of $r_{\rm max}$ at which the HF and HFB energies attain their minimum were always found on the right of the merging point of the two curves. In Table~\ref{tab:results} below we give our results for $N_b=200$ and $N=6,10,16$ and $20$. The HFB ground state energy is always smaller than the HF energy.

In the paper~\cite{LenLew-10}, Lenzmann and Lewin have rigorously studied the gravitational model of this section. They showed the existence of a ground state in both the HF and HFB cases. But, so far, no proof that pairing occurs has been provided. The numerical results of this section tend to show that there is actually always pairing, at least for $N$ not too large.

\begin{figure}[ph!]
\centering

\bigskip

\includegraphics[width=12.5cm]{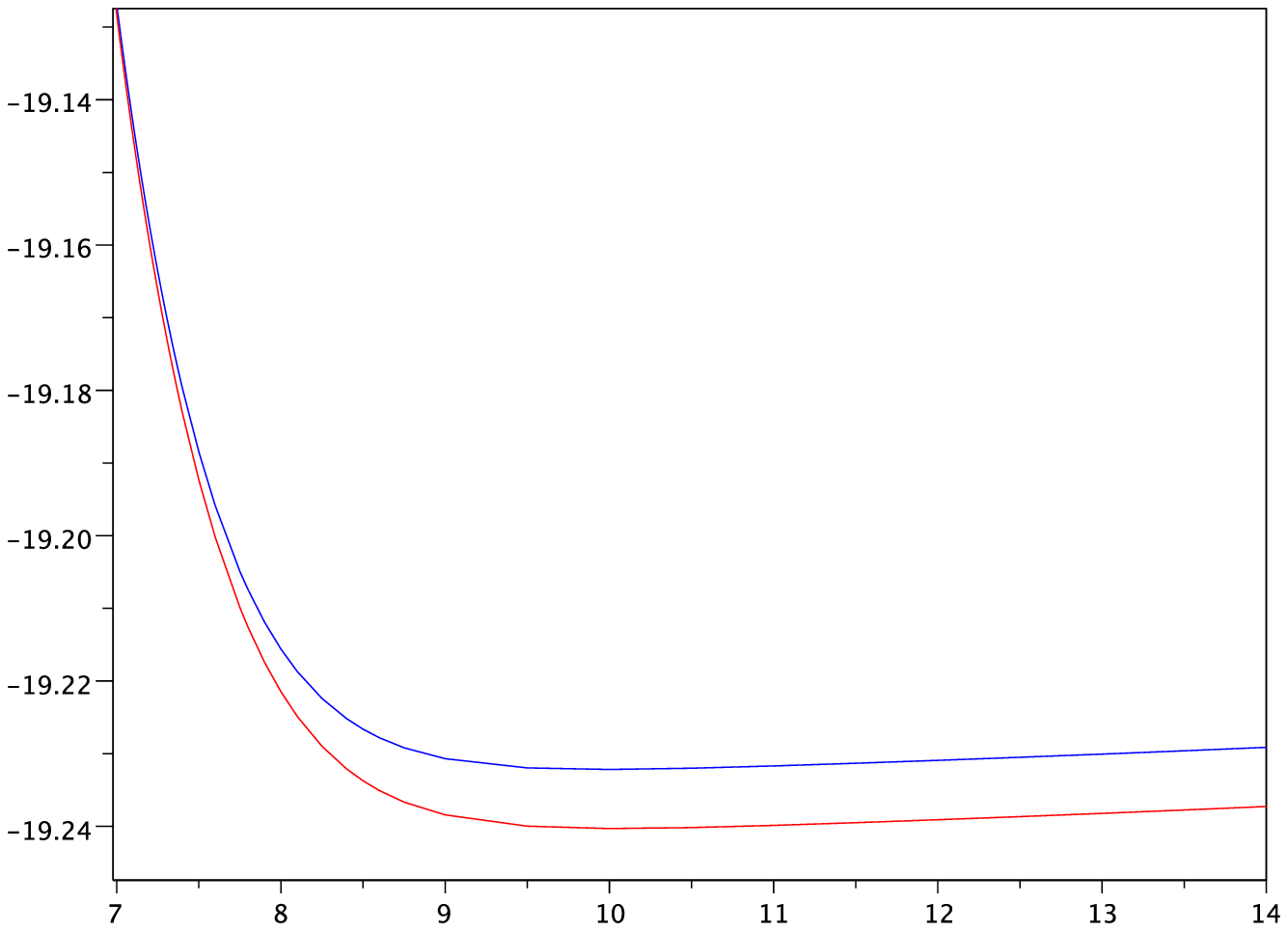}


\includegraphics[width=12.5cm]{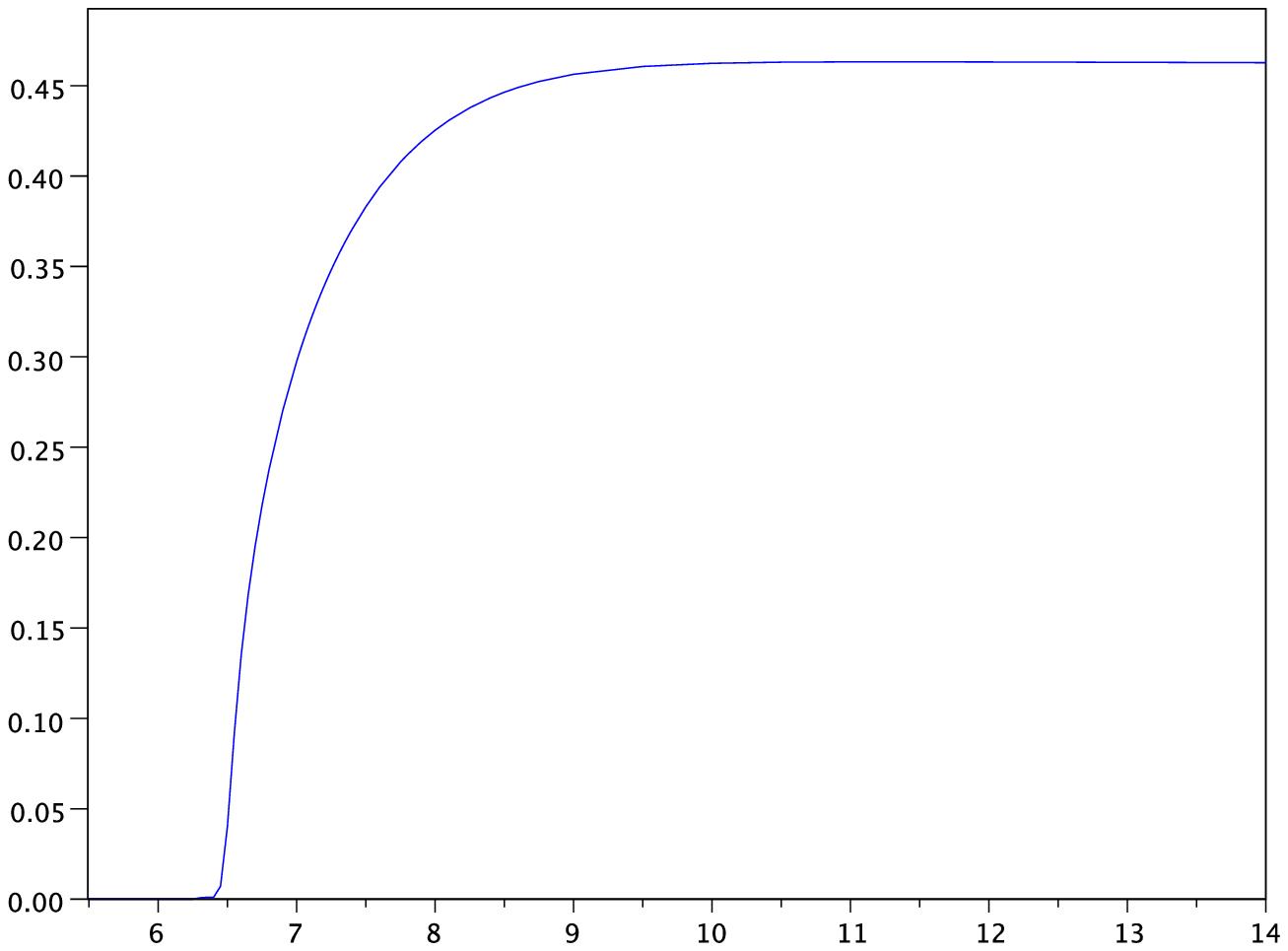}
\caption{Value of the ground state HF and HFB energies (top) and of the norm of the pairing matrix $A$ (bottom), as functions of the radius $r_{\rm max}$ in which the system is confined, for $N=16$, $N_b=200$ and $\ell_{\rm max}=1$.\label{fig:energies_rmax}}
\end{figure}

\subsubsection{Properties of the HFB ground state}

In Table~\ref{tab:results} below we give our results for $N_b=200$ and $N=6,10,16$ and $20$, for the optimal values of $r_{\rm max}$. With $\ell_{\rm max}=1$ we have observed that the shells are filled alternatively. In HF theory, the cases $N=10$ and $N=16$ correspond to closed shells, whereas for $N=6$ and $N=20$ the last shell is only partially filled. This is a simple explanation for the fact that the pairing matrix is much bigger in these cases.

In Table~\ref{tab:occ_nbs} we display the occupation numbers for the optimal HFB ground state in the closed shell case $N=16$ and in the open shell case $N=20$. 
Because of the spin, these are the eigenvalues of $G_0$ multiplied by 2 and that of $G_1$ multiplied by 6. Even in the closed shell case $N=16$, a rather important pairing effect is observed between the last filled orbital (the second $\ell=1$ eigenvalue) and the first unfilled one (the third $\ell=0$ eigenvalue). This results in a decrease of the last occupation number of the HF one-particle density matrix by approximately $0.228$.

\begin{table}[h!]
\centering
\begin{tabular}{|l|l|l|l|l|l|}
\hline
$N$ & $r_{\rm max}$ & HF gap & HF energy  & HFB energy & $\norm{A}$\\
\hline
6 & 15 & 0 & -1.7327688  & -1.9934252 & 1.0242134\\
10 & 11 & 1.023642 & -6.7911634  & -6.8148576 & 0.5871951\\
16 & 10 & 1.404396 & -19.232177 & -19.2403096 & 0.4623593\\
20 & 9 & 0 & -30.010574  & -30.174576 & 0.8235512\\
\hline
\end{tabular}

\caption{Results for $N_b=200$ and $\ell_{\rm max}=1$.\label{tab:results}}

\bigskip

\begin{tabular}{|c|c|}
\hline
\multicolumn{2}{|c|}{$N=16$}\\
\hline
$\ell=0$ & $\ell=1$\\
\hline
1.9999318 & 5.9997504\\
1.9980654 & 5.7710970\\
0.2281366 & 0.0026448\\
0.0002694 & 0.0002720\\
0.0000120 & 0.0000084\\
0.0000014 & 0.0000012\\
0.0000002 & 3.316D-07\\
6.896D-08 & 1.005D-07\\
$\vdots$ & $\vdots$\\
\hline
\end{tabular}
\qquad\qquad
\begin{tabular}{|c|c|}
\hline
\multicolumn{2}{|c|}{$N=20$}\\
\hline
$\ell=0$ & $\ell=1$\\
\hline
1.9999832 & 5.9999490\\
1.9997458 & 5.9983572\\
1.9875134 & 2.0084946\\
0.0045222 & 0.0012960\\
0.0000690 & 0.0000552\\
0.0000058 & 0.0000066\\
0.0000010 & 0.0000002\\
0.0000002 & 3.072D-07\\
$\vdots$ & $\vdots$\\
\hline
\end{tabular}

\caption{Occupation numbers of the HFB minimizer, for $N_b=200$ and $\ell_{\rm max}=1$.\label{tab:occ_nbs}}
\end{table}

\subsubsection{Quality of the approximation in terms of the number $N_b$ of points}

In Table~\ref{tab:N_b} we display the HF and HFB ground state energies for $N=16$, $\ell_{\rm max}=1$ for the the optimal value of $r_{\rm max}$, in terms of the number of discretization points $N_b$ of the regular grid. The convergence is not very fast, but we see that the difference between the HF and the HFB energy, as well as the norm of the pairing matrix are of the same order for small $N_b$ as they are for larger $N_b$'s. From this observation we can conclude that it is probably not necessary to take $N_b$ very large in order to decide whether pairing occurs or not.

\begin{table}[h!]
\centering
\begin{tabular}{|l|l|l|l|l|l|}
\hline
$N_b$ & $r_{\rm max}$ & HF energy  & HFB energy  & difference & $\norm{A}$\\
\hline
30 & 9 & -19.112314  &  -19.117948 & 0.005634 &  0.425604\\
50 & 9 & -19.189066 & -19.196012 & 0.006946& 0.445728\\
100 & 9 & -19.222300  & -19.229872 &0.007572& 0.454173\\
150 & 10 & -19.229494 & -19.237574 &0.008080& 0.461725\\
200 & 10 &  -19.232176 & -19.240308 &0.008132&  0.462363\\
250 & 10 & -19.233420 & -19.241576  &0.008156& 0.462659\\
300 & 10 &  -19.234094 &  -19.242264  &0.008170& 0.462821\\
400 & 11 & -19.234826 & -19.243068 &0.008242& 0.463905\\
500 & 11 &  -19.235206& -19.243456& 0.008250&0.463985\\
\hline
\end{tabular}

\bigskip

\caption{Value of the HF and HFB energies for $N=16$ and $\ell_{\rm max}=1$ and the (approximate) optimal $r_{\rm max}$.\label{tab:N_b}}
\end{table}

\subsection{A simplified model for protons and neutrons}\label{sec:Coulomb-perturbed}

In this section we report on our numerical results concerning a simple model inspired of nuclear physics. The interaction between protons and neutrons is not a fundamental law of nature because these are composite particles made of quarks, which interact through weak, strong and electrostatic forces. A common procedure used in nuclear physics is to use \emph{empirical forces}~\cite{RinSch-80} which involve a small number of parameters which are fitted to experiment or to the known behavior of the model in some limits. The most common forces used in practice are the so-called Skyrme~\cite{Skyrme-59} and Gogny~\cite{Gogny-73,Gogny-75,DecGog-80} forces and they depend nonlinearly on the state itself. Here we consider an effective force which is fixed and does not depend on the quantum state. We also take it spin-independent and isospin-independent. Our goal is to test some simple ideas and not to do a real nuclear physics calculation.

\subsubsection{Model}

The nucleon-nucleon potential has been observed to be repulsive at short distances and only attractive at medium distances. It decays very fast at infinity. A simple choice to describe this is to take
\begin{equation}
W(x)=\frac{\kappa}{|x|}-a_1\,e^{-b_1|x|^2}+a_2\,e^{-b_2|x|^2},
\label{eq:form_V_nuclear}
\end{equation}
with $a_2,a_1>0$, $b_1<b_2$. The constant $\kappa$ is 1 for the proton-proton interaction and $0$ for the proton-neutron and the neutron-neutron interaction. 
The other constants usually also depend on the isospin (the quantum variable which determines whether a nucleon is a neutron or a proton). For simplicity we work here with particles having a definite isospin. This means that we assume to have either only protons or only neutrons. In particular we want to ask for which strength of the effective force it becomes possible for the protons to overcome their Coulomb repulsion and form a bound state. In reality a nucleus is made of a certain number of protons and neutrons and one has to use a different HFB state for each species.

In our applications we have chosen for simplicity $b_1=1$, $b_2=4$, $a_1=a=2\,a_2/3$. This means that the effective force takes the form
\begin{equation}
W(x)=\frac{\kappa}{|x|}+a\left(\frac{3}{2}\,e^{-4|x|^2}-e^{-|x|^2}\right).
\label{eq:form_V_nuclear_final}
\end{equation}
When $\kappa=1$, this force is purely repulsive for $a\leq2.87$ and it becomes attractive at intermediate distances for larger $a$'s. The corresponding force is displayed in Figure~\ref{fig:Gogny_force} for $a=1$ and $\kappa=0$. 

\begin{figure}[h!]
\centering
\includegraphics[width=9cm]{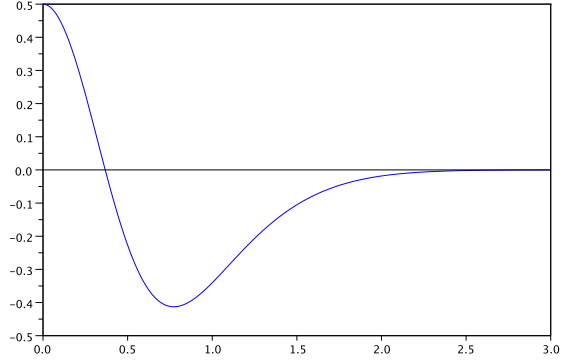}
\caption{The effective force $-\,e^{-|x|^2}+3\,e^{-4|x|^2}/2$ used in our calculation of Section~\ref{sec:Coulomb-perturbed}. The (repulsive) Coulomb potential must be added for protons. \label{fig:Gogny_force}}
\end{figure}

One can ask several questions concerning the model considered in this section:
\begin{enumerate}
\item For which value of $a$ does a system of $N$ identical nucleons bind in Hartree-Fock theory?
\item Is there always pairing when there is binding?
\item Can pairing effects allow for binding with a smaller $a$ than in HF theory?
\end{enumerate}
These questions are mostly of academic nature for the very simplified model considered in this section. But investigating the same problems with more realistic forces is  very important from a physical point of view. From a mathematical point of view, nothing seems to be known for simple models of the same form as in this section. It is not even known that binding always occurs for $a$ large enough with the previous interaction. We hope that our calculations will stimulate some further mathematical studies.

\subsubsection{Some computational details}

We always minimize over states having the spin, time-reversal and rotation symmetries. The Bach-Fröhlich-Jonsson Theorem~\ref{thm:variational_constrained} does not apply to the model of this section, hence we are making a further approximation here.

For such symmetric states we have shown in Section~\ref{sec:spherical-symm} that the energy can be expressed in terms of
$$(ij|mn)_{\ell,\ell'}= \int_0^\ii r^2\,dr\int_0^\ii s^2\,ds\;\chi_i(r)\,\chi_j(r)\,\chi_m(s)\,\chi_n(s)\, w_{\ell,\ell'}(r,s)$$
where, for the model considered in this section,
\begin{align*}
w_{\ell,\ell'}(r,s)&=\frac12\int_{-1}^1W\left(\sqrt{r^2+s^2-2rst}\right)\, P_\ell(t)\,P_{\ell'}(t)\,dt\\
&=\frac12\int_{-1}^1P_\ell(t)\,P_{\ell'}(t)\bigg(\frac{\kappa}{\sqrt{r^2+s^2-2rst}}\\
&\qquad\qquad\qquad -a_1\,e^{-b_1(r^2+s^2-2rst)}+a_2\,e^{-b_2(r^2+s^2-2rst)}\bigg)\,dt. 
\end{align*}
For $0\leq\ell,\ell'\leq\ell_{\rm max}$ with $\ell_{\rm max}$ not too large, the Gaussian integrals can be computed exactly and it is possible to find the exact expression of $w_{\ell,\ell'}(r,s)$. 

The computation of the integral $(ij|mn)_{\ell,\ell'}$ against hat functions is much more tedious, however. It is easy to find an exact expression for the Coulomb part, but not so simple for the Gaussian part. So we have performed a numerical calculation of these integrals. Since we have of the order of $(N_b)^4$ integrals, we could not take $N_b$ too large. The results of the previous section indicated that the existence of pairing effects does not depend very much on the size of the basis.

\subsubsection{Slow convergence and oscillations of Roothaan}

We have observed that the Roothaan algorithm \emph{almost always oscillates}, even in the HFB case (see some  examples in Figures~\ref{fig:RTHvsODA_a35},~\ref{fig:RTHvsODA_a20} and~\ref{fig:RTHvsODA_a20_kappa0}). This is in stark contrast with the results of the previous section where the Roothaan algorithm was almost always converging. Sometimes it very slowly converges in the HF case (see, e.g., Figure~\ref{fig:RTHvsODA_a20}). However we have always obtained convergence for the HF Roothaan algorithm when $a$ is small enough, that is, when it is expected that there is actually no binding. For the case displayed in Figure~\ref{fig:RTHvsODA_a20} we have $a=20$ but the critical $a$ is about $\simeq24$ (see the next section).

We conclude that using the ODA is very important for such attractive potentials. The same might be true with the more involved forces used in nuclear physics.

\begin{figure}[hp!]

\begin{tabular}{ll}
\hspace{-0.3cm}\includegraphics[width=7.2cm]{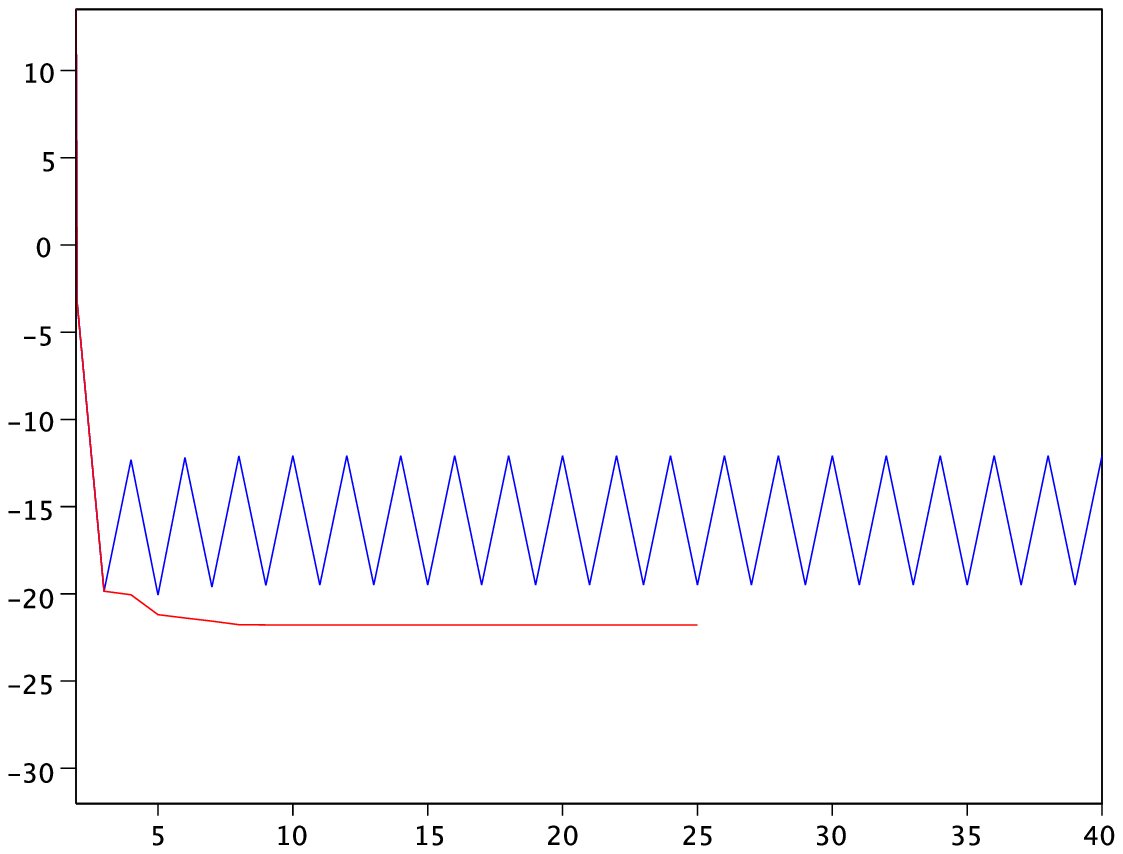}
&\includegraphics[width=7.2cm]{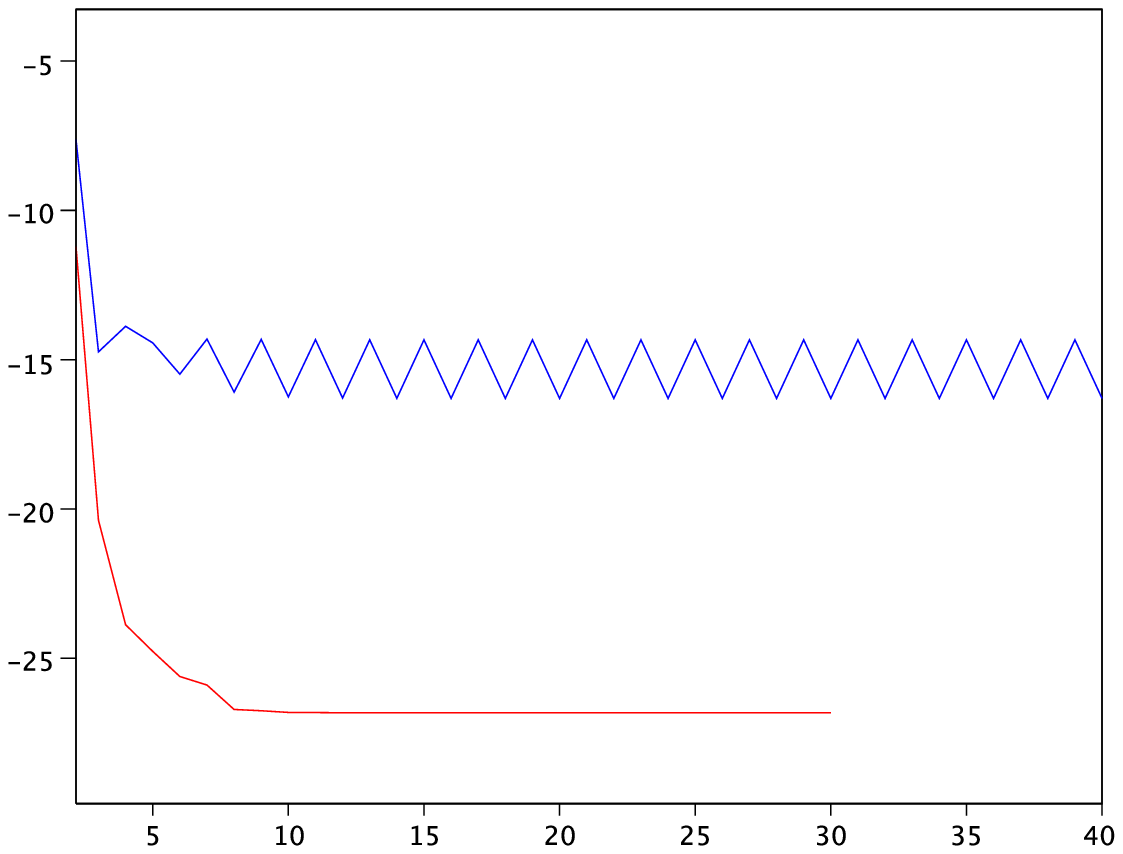}\\
\end{tabular}

\caption{Energy along the iterations in the HF (left) and HFB (right) cases, for the Roothaan (blue) and the ODA (red), with $N=4$, $N_b=20$, $\ell_{\rm max}=1$, $r_{\rm max}=3$, $a=35$ and $\kappa=1$ (proton-proton case).\label{fig:RTHvsODA_a35}}

\begin{tabular}{ll}
\hspace{-0.3cm}\includegraphics[width=7.2cm]{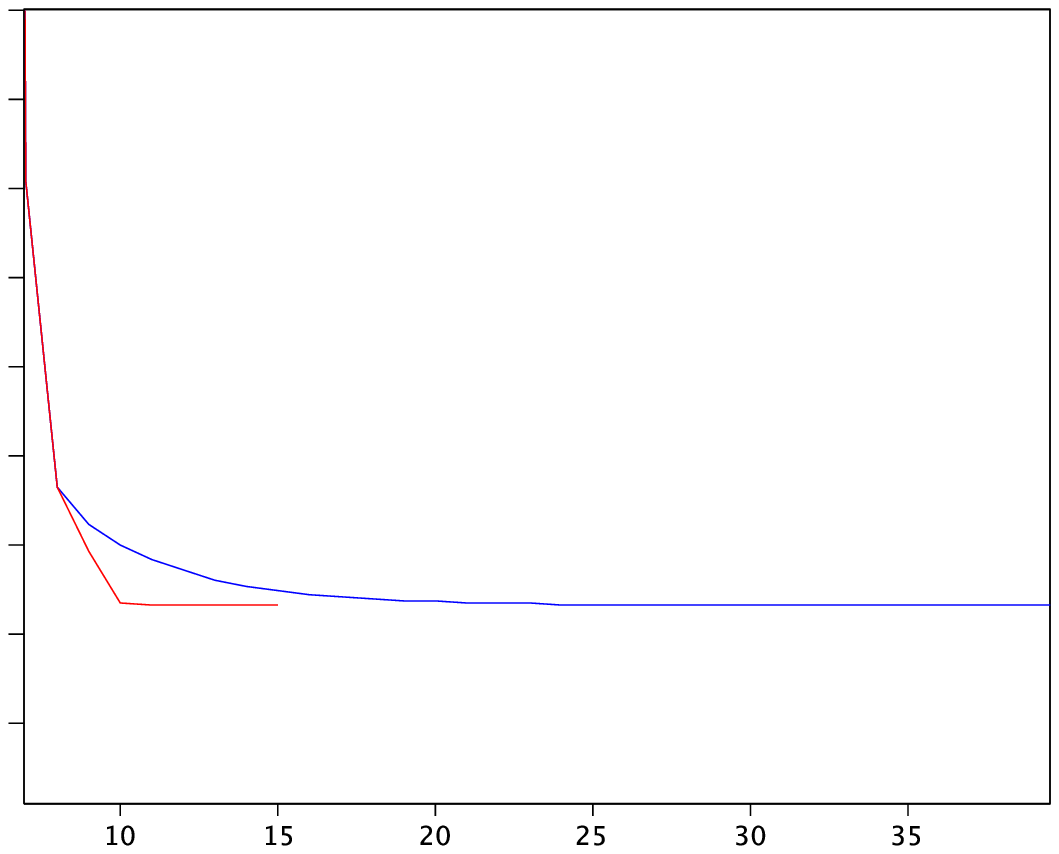}
&\includegraphics[width=7.2cm]{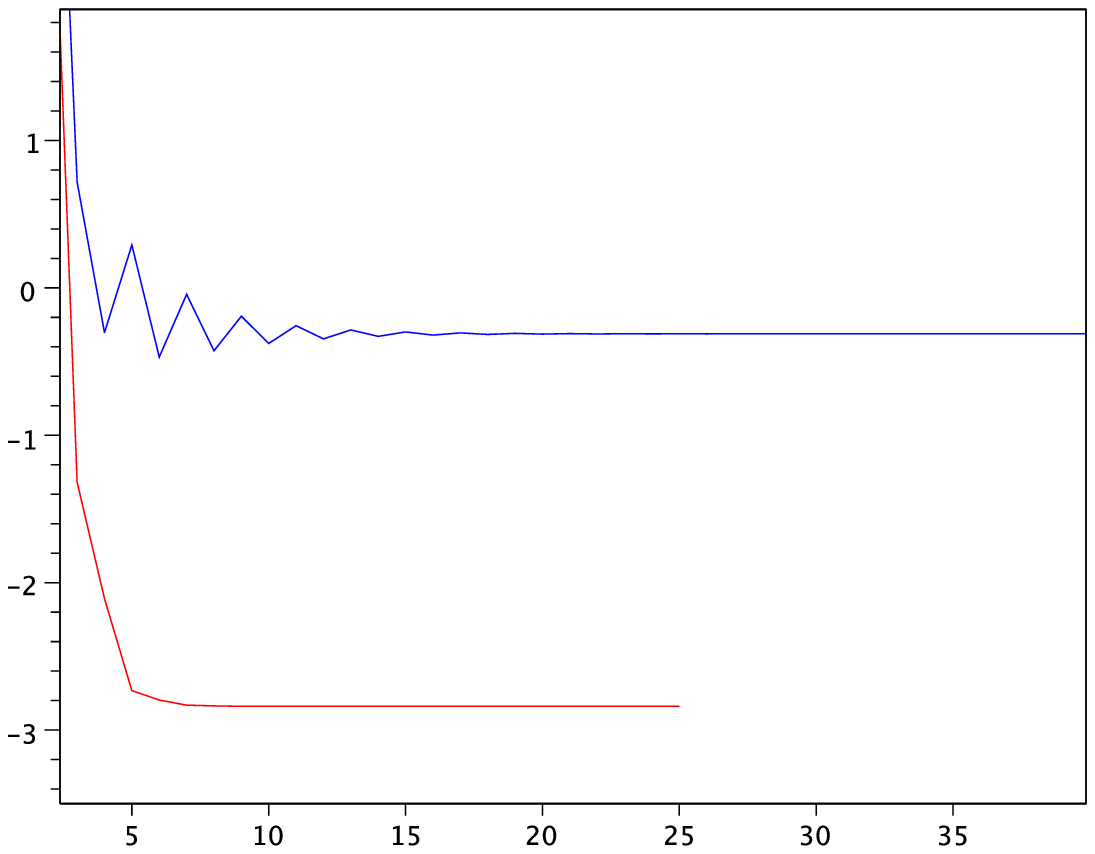}\\
\end{tabular}

\caption{Same calculation with $a=20$  and $\kappa=1$ (proton-proton case). The Roothaan algorithm slowly converges in the HF case and it oscillates in the HFB case but the two values are very close.\label{fig:RTHvsODA_a20}}

\begin{tabular}{ll}
\hspace{-0.3cm}\includegraphics[width=7.2cm]{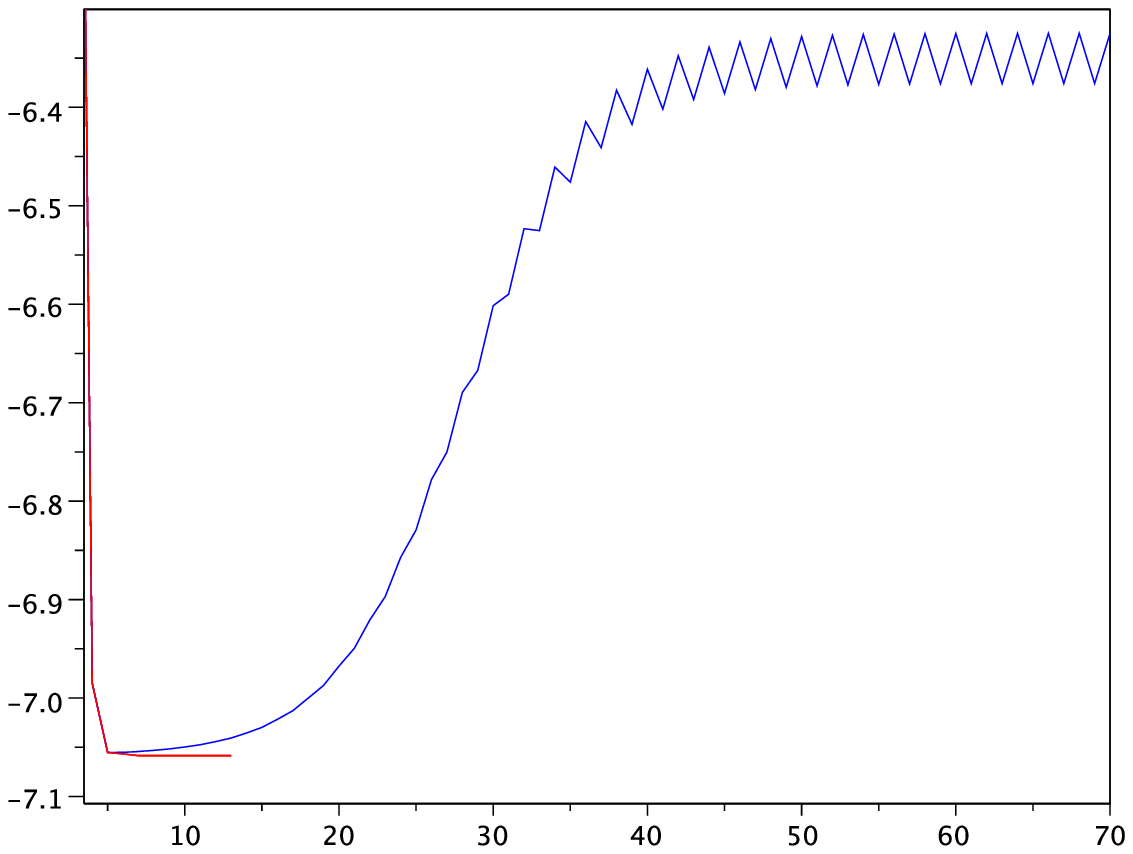}
&\includegraphics[width=7.2cm]{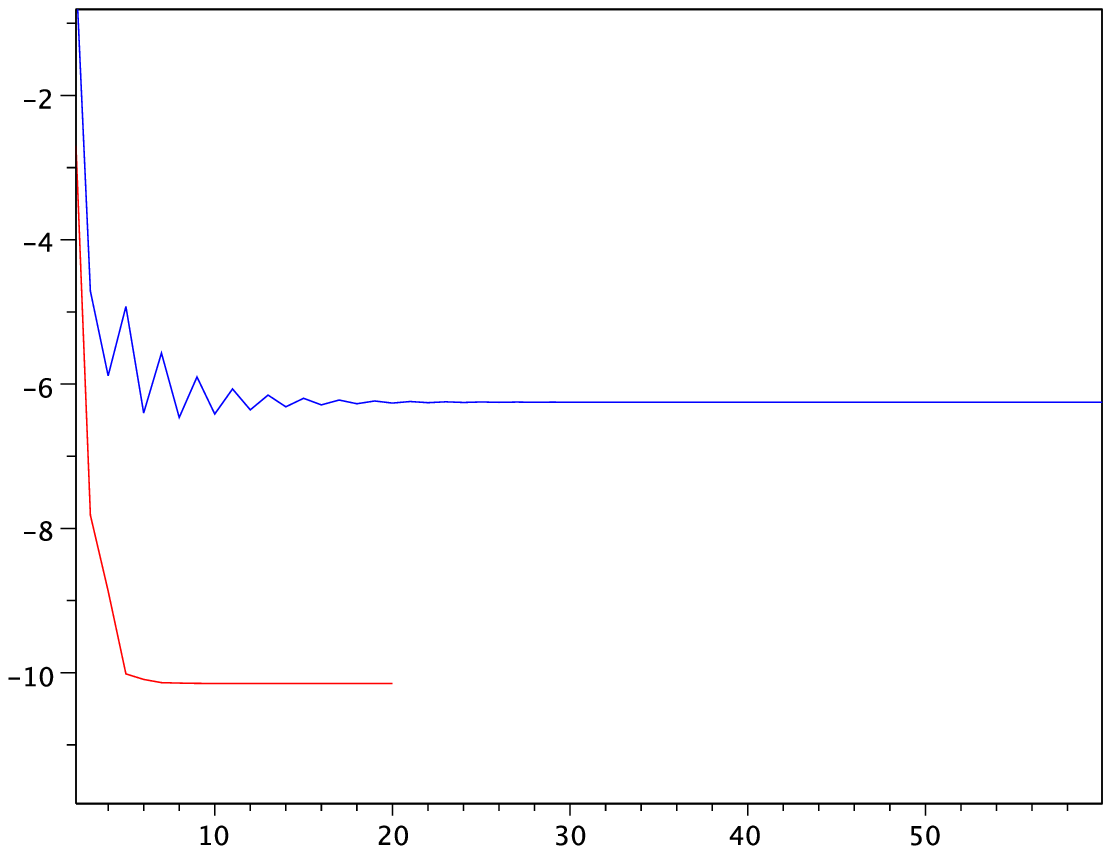}\\
\end{tabular}

\caption{Same calculation with $a=20$  and $\kappa=0$ (neutron-neutron case). The Roothaan algorithm oscillates in the HFB case, but the two values are very close.\label{fig:RTHvsODA_a20_kappa0}}
\end{figure}

\subsubsection{The critical strength}

In finite dimension there is always a minimizer. Saying differently, since the particles are trapped in a ball, they always bind. Furthermore we work with rotation-invariant states. So, for the true model in infinite dimension, the particles escaping to infinity cannot form a bound state of the same kind because they are too far from the (fixed) center of symmetry. In this special case they will spread out and have a vanishing energy.

In Hartree-Fock theory, we conclude that we can detect the loss of binding by looking at the last filled HF eigenvalue. When it crosses 0, this corresponds to the last particle becoming a scattering state. We can therefore choose as definition for the critical strength $a$, the value at which this eigenvalue is 0. In finite dimensional Hartree-Fock-Bogoliubov theory things are less clear and we will not discuss the problem of binding. In our simulations we have observed that the HFB ground state density was always rather close to the HF ground state density, which suggests that there is binding in HFB as well.

We have made some calculations for $N_b=50$ and $N=4$. We found that the critical strength is about $a_c\simeq 23.5$ in the proton-proton case and $a_c\simeq 17.5$ in the neutron-neutron case. In Figure~\ref{fig:vary_a} we display the HF and HFB energies as functions of the parameter $a$, for $N=4$ and $\kappa=1$ (proton-proton case). Figure~\ref{fig:vary_a_kappa0} is the equivalent result for $\kappa=0$ (neutron-neutron case). For these calculations we have chosen $r_{\rm max}=3$ which is the optimal choice for $a$ in a neighborhood of the critical value. Like in the previous section the results depend on the radius of the ball in which the system is confined. We see that there is always pairing, in the sense that the HFB curve is below the HF curve. This is even more manifest in the neutron-neutron case for which the potential is much more attractive than for protons, which repel with the Coulomb potential. Also, the norm of the pairing matrix $A$ does not vary too much with $a$, it stays between $0.80$ and $0.95$ for $a$ in the range $15\leq a\leq 30$, for both $\kappa=0$ and $\kappa=1$.

From these numerical results we can conclude that pairing seems to happen in this model, for any strength $a$ for which there is binding in Hartree-Fock theory. It is an interesting problem to actually prove that pairing always occurs, for instance for $a$ large enough. We are not aware of any result of this kind.

\begin{figure}[hp!]
\begin{tabular}{ll}
\hspace{-0.3cm}\includegraphics[width=7.2cm]{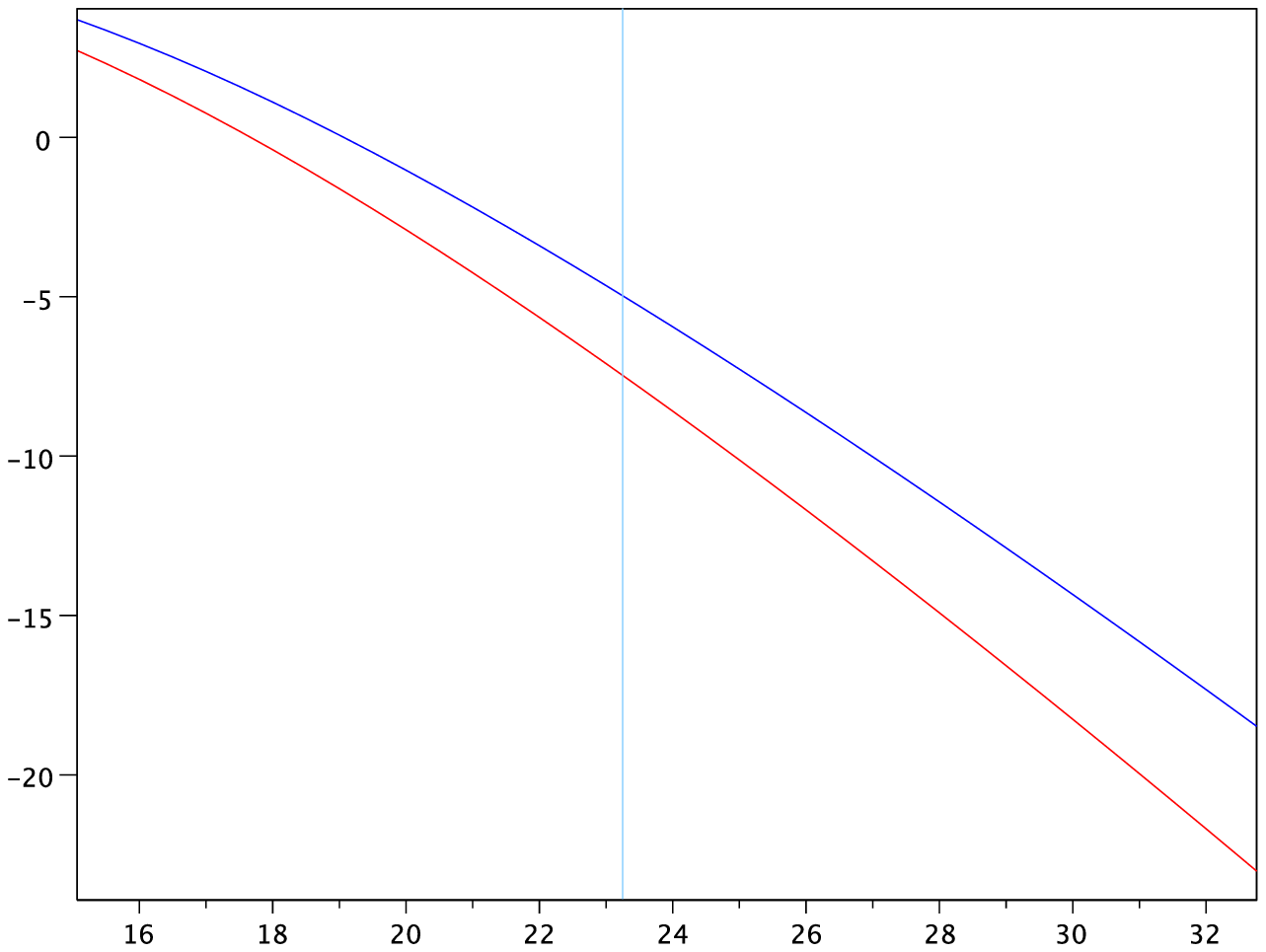}
&\includegraphics[width=7.2cm]{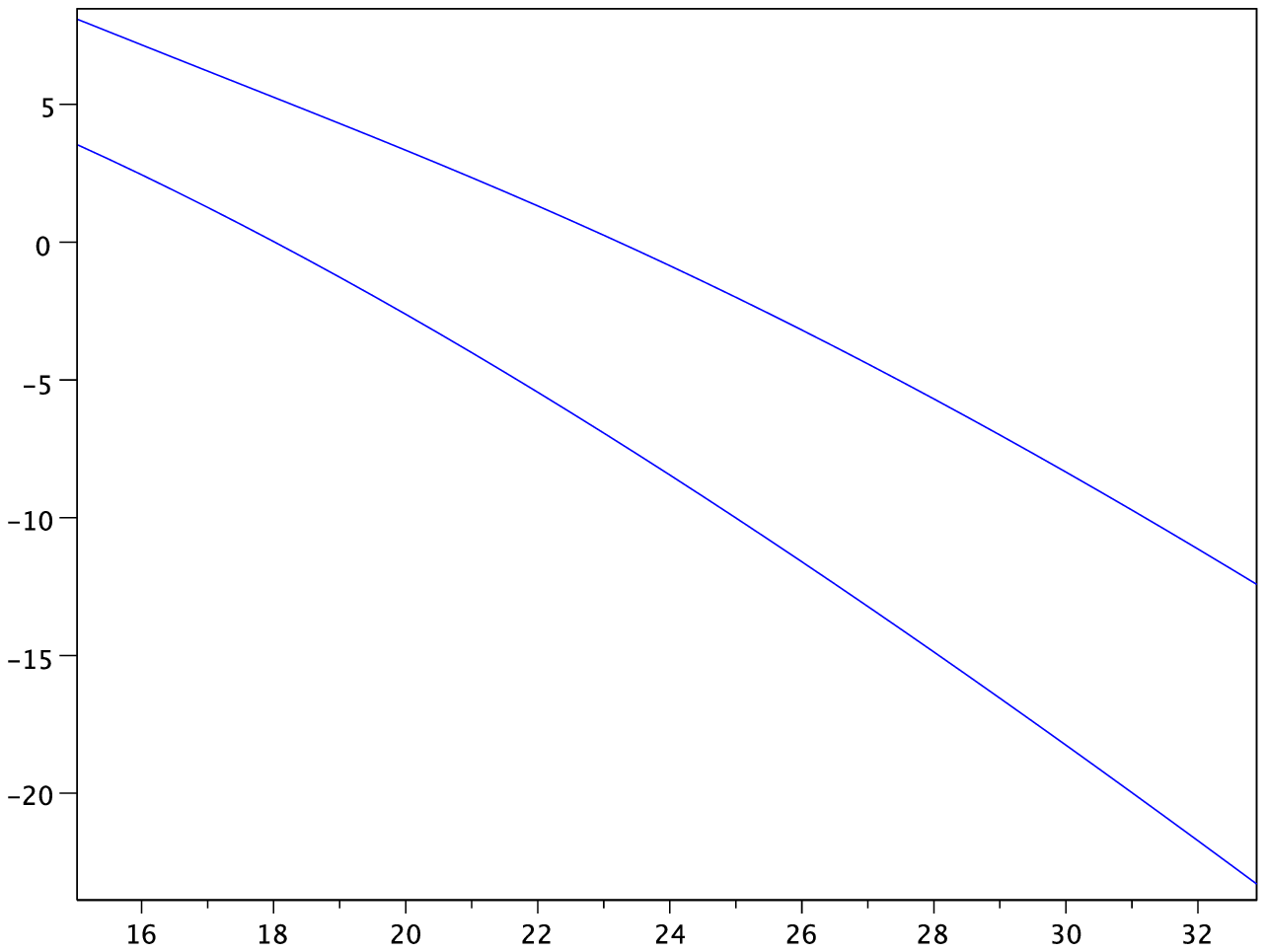}\\
\end{tabular}

\caption{Left: Values of the HF (blue) and HFB (red) ground state energies as functions of $a$, with $N=4$, $N_b=50$, $\ell_{\rm max}=1$, $r_{\rm max}=3$ and $\kappa=1$ (proton-proton case). The vertical line is the value of $a$ for which the last filled eigenvalue vanishes. Right: Values of the two filled HF eigenvalues for the same $a$.\label{fig:vary_a}}

\begin{tabular}{ll}
\hspace{-0.3cm}\includegraphics[width=7.2cm]{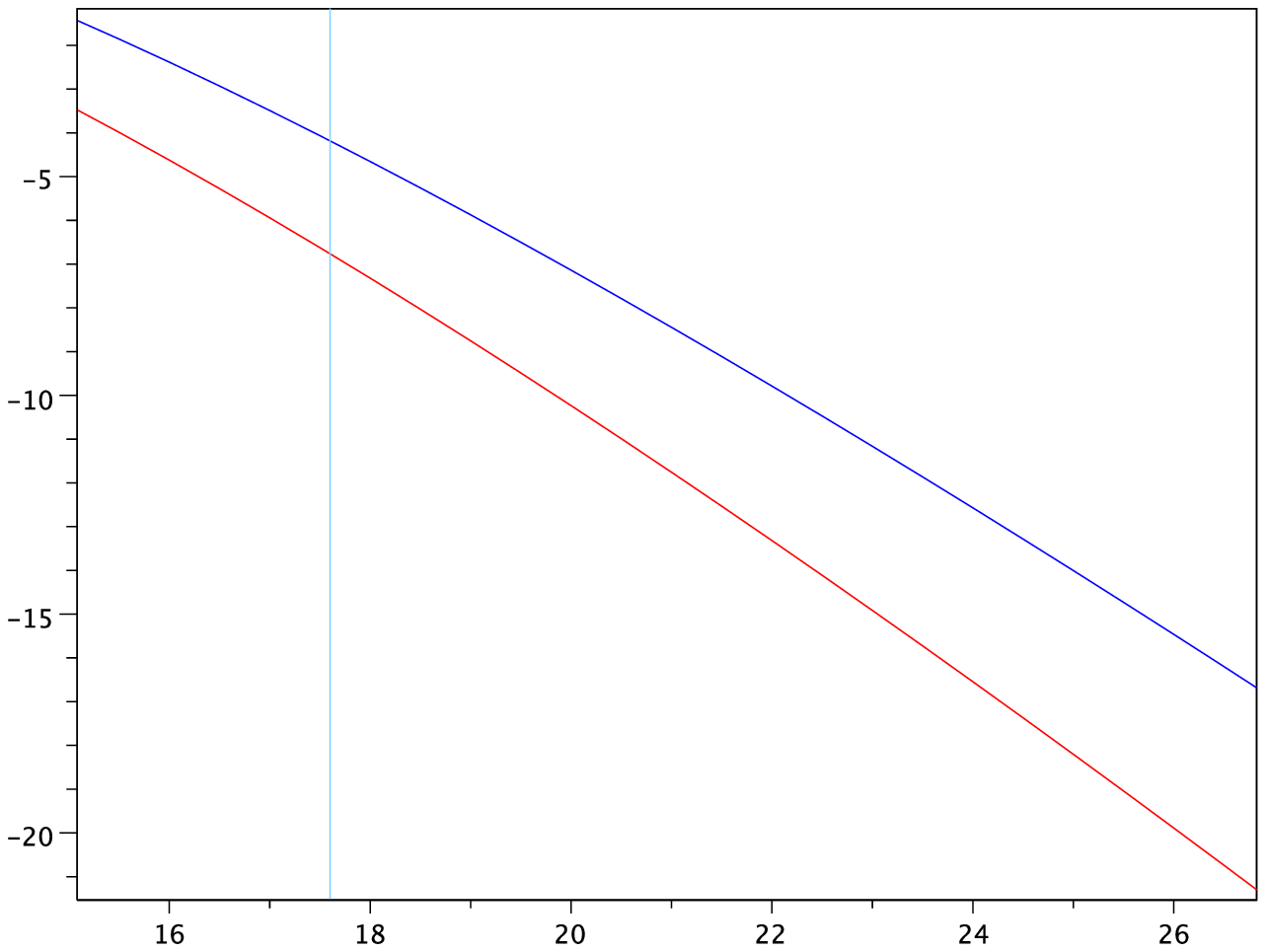}
&\includegraphics[width=7.2cm]{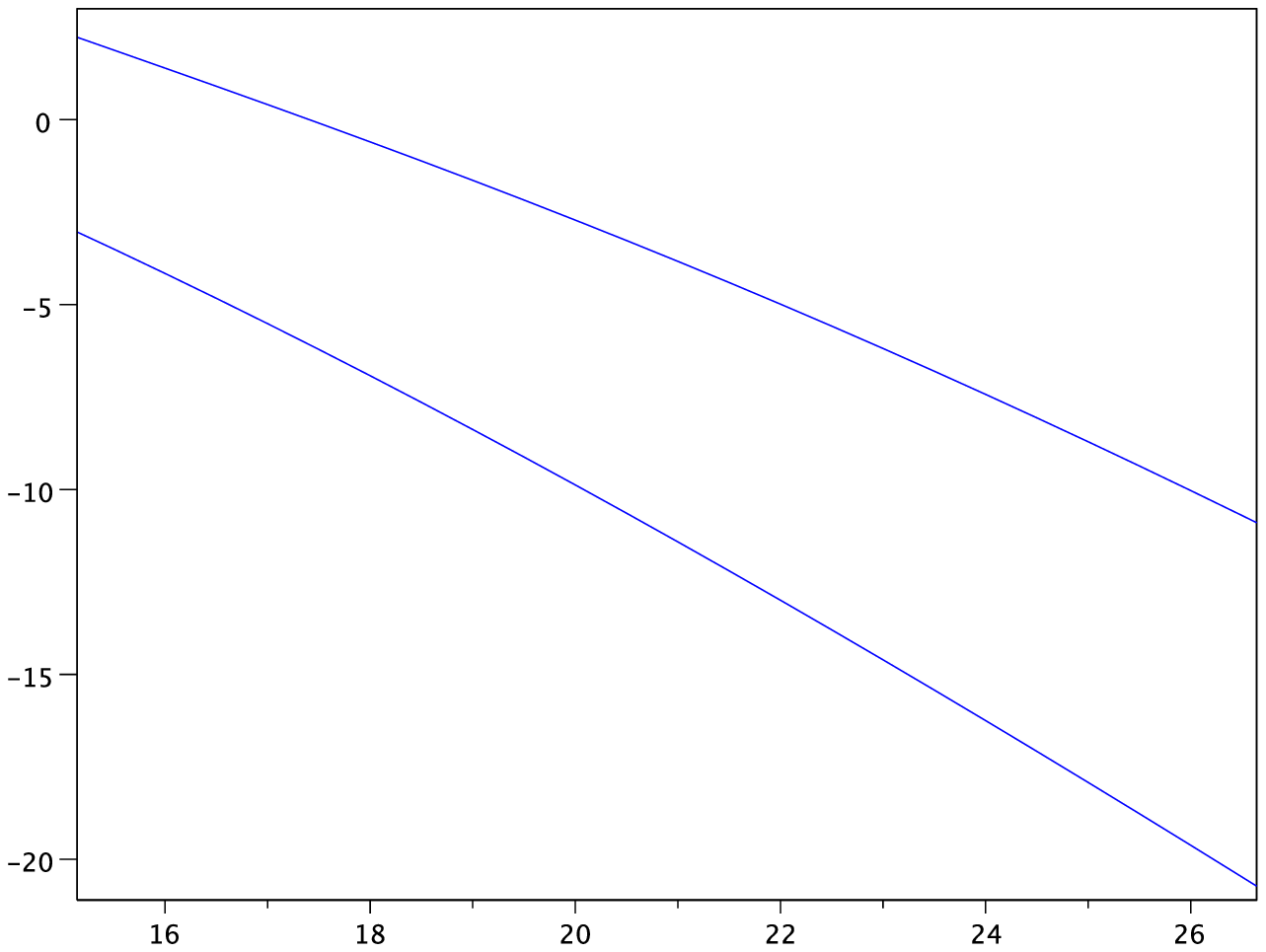}\\
\end{tabular}

\caption{Same calculations for $\kappa=0$ (neutron-neutron case).\label{fig:vary_a_kappa0}}
\end{figure}


\end{document}